\renewcommand{\arraystretch}{1.5}
\newcommand{\R}{\ensuremath{\mathbb{R}}}
\newcommand{\C}{\ensuremath{\mathbb{C}}}
\newcommand{\e}{\varepsilon}
\newtheorem {theorem} {Theorem} [section]
\newtheorem {proposition}  {Proposition}[section]
\newtheorem {corollary}  {Corollary}[section]
\newtheorem {definition}  {Definition}[section]
\newtheorem {remark} {Remark}
\newtheorem {example} {Example}
\begin{document}

\title[complex potentials]
{complex potentials and holomorphic differential equations}

\author[G.Rondon and P. R. Silva]
{Gabriel Rondon$^1$ and Paulo Ricardo da Silva$^2$}

\address{$^{1}$ Departamento de Matemática, Instituto de Ciências Exatas (ICEx), Universidade Federal de Minas Gerais (UFMG), Av. Pres. Antônio Carlos, 6627, Cidade Universitária - Pampulha, 31270-901, Belo Horizonte, MG, Brazil}
 \address{$^2$ Departamento de Matem\'{a}tica, Instituto de Bioci\^{e}ncias Letras
	e Ci\^{e}ncias Exatas, UNESP, Univ Estadual Paulista, Rua C. Colombo, 2265,
	CEP 15054--000 S\~{a}o Jos\'{e} do Rio Preto, S\~{a}o Paulo, Brazil}
\email{grondon@ufmg.br}	
\email{ paulo.r.silva@unesp.br}

\subjclass[2010]{32A10, 34C20, 34A34, 34A36, 34C05.}

\keywords {piecewise holomorphic systems, anti-holomorphic systems, limit cycles, complex potential, first integrals}
\date{}
\dedicatory{}
\maketitle

\begin{abstract}
A complex potential is a holomorphic function $\Omega:\mathbb{C} \to \mathbb{C}$ whose real and 
imaginary parts generate a pair of orthogonal foliations, representing the equipotential lines and the 
streamlines of  $\dot{z} = \overline{\Omega'(z)}$. 
In this work, we generalize the concept of potential to the broader class of  dynamical systems of the 
form $\dot{z} = f(z)$, with $f:\mathbb{C} \to \mathbb{C}$ holomorphic. The resulting potential induces 
a rectification mapping providing a natural framework for the topological classification of phase portraits 
of planar polynomial vector fields. The existence of complex potentials serves as a powerful tool in addressing fundamental problems, such as  the establishment of bounds for the number of limit cycles in piecewise-smooth systems, and the local configuration of curvature lines around umbilic points, among others.
\end{abstract}

\section{Introduction}

The interplay between complex analysis and the qualitative theory of planar differential equations has proven to be a rich source of insights, linking analytic properties of functions with the geometric structure of phase portraits. 

We recall that a \textit{holomorphic function} $f$ is a complex-valued function defined in a domain $V \subseteq \mathbb{C}$ where $u=\Re(f)$ and $v=\Im(f)$ are continuous, the first order partial derivatives exist in $V$ and satisfy the \textit{Cauchy--Riemann equations}
    \[u_x = v_y, \qquad u_y = -v_x, \qquad \forall z=x+iy \in V.\]
The Looman--Menchoff Theorem states that these conditions are sufficient to guarantee the analyticity of $f$. If $f$ is holomorphic in a punctured disc $D(z_0,R)\setminus \{z_0\}$ but not differentiable at $z_0$, we say that $z_0$ is a \textit{singularity} of $f$; in this case $f$ admits a Laurent expansion
\begin{equation}\label{eq:Laurent}
    f(z) = \sum_{k=1}^{\infty}\frac{B_k}{(z-z_0)^k} + \sum_{k=0}^{\infty} A_k (z-z_0)^k.
\end{equation}

When a planar vector field is identified with a holomorphic function $f : \mathbb{C} \to \mathbb{C}$, several remarkable features emerge: the absence of limit cycles, the reversibility of polynomial systems with real coefficients, and the non‑existence of the center–focus problem, among others. These facts have been thoroughly discussed in previous works \cite{BT,GGX,GSR1}. 

Throughout the paper we study \textit{planar vector fields} $f=(u,v)$ seen as functions of a single complex variable $f = u+ i v$ in a simply connected domain $D \subseteq \mathbb{C}$. A \textit{trajectory} of $ \dot z = f(z)$ is a curve $z(t)=x(t)+iy(t)$, $t\in\mathbb{R}$, which satisfies the differential system
\begin{equation}\label{eq:system}
        \dot x = u(x,y), \quad \dot y = v(x,y).
\end{equation}
We will assume that either $f$ is a holomorphic function or that $f$ is an \textit{anti-holomorphic function}, that is, $\overline{f}$ is holomorphic.

When a vector field is regarded as a function of a complex variable, it is often possible to integrate it explicitly and, consequently, to determine its phase portrait. This is precisely the case that occurs in \textit{fluid dynamics}, where the trajectories are referred to as \textit{streamlines}. As an illustration, consider the anti-holomorphic system $\dot{z}=\overline{z}^2$. The corresponding real system is
\[x' = x^2-y^2, \qquad y' = -2xy,\]
which is nonlinear and not immediately solvable. However, the trajectories satisfy the exact equation $2xy\,dx+(x^2-y^2)\,dy=0$, which is the differential of $\psi(x,y)=x^2y-\frac{y^3}{3}$. Hence, the streamlines are precisely the level curves of $\psi$, as shown in Figure~\ref{figcv-d}. This example already hints at the existence of a first integral, a feature that will be explained systematically through the concept of a complex potential.

\begin{figure}[H]
	\centering 
	\includegraphics[width=8cm, height=4cm]{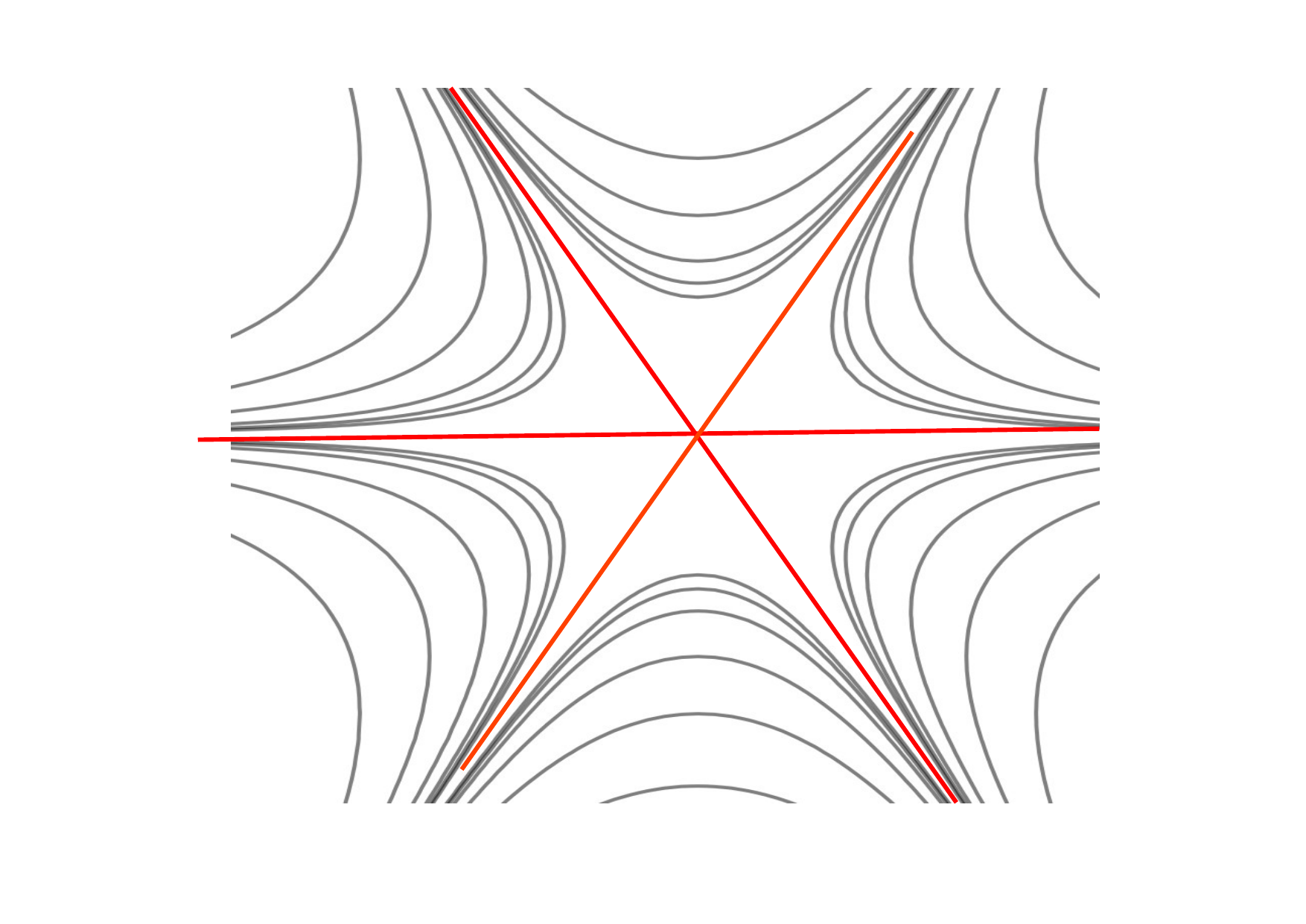} 
	\caption{Streamlines of $f(z)=\overline{z}^2$.}\label{figcv-d}
\end{figure}

Given a holomorphic system $\dot{z}= f(z)$ defined on some open set, its local phase portrait near a singularity is completely classified up to conformal conjugation. As shown in \cite{BT,GGX}, near a regular point the system is conjugate to $\dot{z}=1$, near a simple zero to $\dot{z}=z$, near a multiple zero of order $n$ to $\dot{z}=z^n/(1+\lambda z^{n-1})$ (where $\lambda=\operatorname{Res}(1/f,z_0)$), and near a pole of order $n$ to $\dot{z}=1/z^n$. This classification underscores the rigid normal form structure imposed by holomorphicity.

Both holomorphic systems $\dot{z}=f(z)$ and anti-holomorphic systems $\dot{z}=\overline{f(z)}$ (with $f$ holomorphic) exhibit strong integrability properties. For holomorphic systems, the trajectories are given implicitly by the level curves $\Im\int\frac{1}{f(z)}dz = \text{constant}$, which provides an explicit description of orbits though not necessarily a real-valued first integral. For anti-holomorphic systems, Theorem~\ref{teopsi} shows they are simultaneously gradient and Hamiltonian: if $\Omega(z) = \int f(z)\,dz$, then $\psi(x,y) = \Im\,\Omega(x+iy)$ is a genuine first integral. The example $\dot{z}=\overline{z}^2$ above corresponds to $\Omega(z)=z^3/3$, whose imaginary part is exactly $\psi(x,y)=x^2y-y^3/3$.

These integrability properties are intimately connected with the classical notion of a \emph{complex potential} from fluid dynamics. Given a holomorphic function $\Omega = \phi + i\psi$, the velocity field $\dot z = \overline{\Omega'(z)}$ describes an ideal incompressible irrotational flow, with $\psi$ serving as a stream function. The real and imaginary parts of $\Omega$ define orthogonal families, equipotential lines and streamlines, that coincide with the integral curves of $\nabla\phi$ and of the Hamiltonian field with first integral $\psi$. 

In this work we systematically exploit this connection. For a general holomorphic system $\dot z = f(z)$, we consider the generalized potential $\Omega(z) = \int \frac{1}{f(z)}\,dz$ (defined on a suitably cut domain), whose level curves $\Im\Omega=\text{constant}$ capture the phase portrait. For anti-holomorphic systems, the potential $\Omega(z)=\int f(z)dz$ directly provides a first integral. This unified framework explains not only the integrability of both classes but also practical consequences such as the algebraic nature of trajectories in polynomial anti-holomorphic systems (Theorem~\ref{teoalg}) and the existence of rectifying coordinates that trivialize the dynamics.

The piecewise-smooth setting breaks this rigidity in a controlled manner. By taking different holomorphic or anti-holomorphic systems on either side of a switching line, one obtains a discontinuous vector field that inherits explicit orbital descriptions or first integrals on each smooth piece. The mismatch between these structures can create isolated periodic solutions. This phenomenon has been previously explored for piecewise holomorphic systems in \cite{GSR2,GSR4,GSR3}, where bounds on the number of limit cycles were established. In the present work we extend this analysis to more general piecewise systems, including mixtures of holomorphic and anti-holomorphic parts, showing how complex potentials provide precise analytical control over the crossing conditions that determine the existence and number of limit cycles.

Our attention here is devoted to questions related to the integrability and global classification of holomorphic systems, and to applications in piecewise-smooth dynamics and differential geometry. The main contributions of this work are:
\begin{enumerate}
    \item \textbf{Generalization of the complex potential concept.} We introduce the concept of a complex potential for general holomorphic systems $\dot{z}=f(z)$ (Section \ref{sec:potencial}). We prove that anti-holomorphic systems are simultaneously \textit{gradient} and \textit{hamiltonian} (Theorem~\ref{teopsi}), and that polynomial potentials provide a class of systems whose trajectories are all algebraic (Theorem~\ref{teoalg}). We also present explicit potentials for the normal forms of holomorphic vector fields (Theorem~\ref{teofn}).

    \item \textbf{Circulation, net flow and complex time.} We show how the potential can be used to analyze circulation and net flow (Subsection \ref{sec:net1}), and to study the flow of systems with \textit{complex time}, i.e., $\frac{dz}{dT}=f(z)$, $T\in\mathbb{C}$ (Subsection \ref{sec:net2}). In this context, the real and imaginary parts of the potential act as first integrals for the real and imaginary time flows, respectively.

    \item \textbf{Global classification of cubic polynomial systems.} We classify the global phase portraits of cubic monic centered polynomial systems on the Poincaré disk (Theorem~\ref{teosepala}). The potential is used to define a rectifying coordinate system that trivializes the dynamics, revealing the decomposition into canonical regions (center-type, sepal-type, and $\alpha$--$\omega$ regions).

    \item \textbf{Bounds for limit cycles in a special class of piecewise smooth systems.} The complex potential framework yields sharp bounds for limit cycles in various piecewise-smooth configurations. For systems mixing anti-holomorphic and holomorphic parts across a switching line, we prove: piecewise linear systems have at most one limit cycle when the equilibrium of the holomorphic part lies on the switching line (Theorem~\ref{teo_a}), and at most three limit cycles when this equilibrium is arbitrary (Theorem~\ref{teo_b}). 

    For piecewise systems where both sides are anti-holomorphic, we obtain a complete hierarchy: piecewise linear systems admit no limit cycles, piecewise quadratic systems at most one, and piecewise cubic systems at most three (Theorem~\ref{thm:limit_cycles}). These bounds are sharp and demonstrate how the degree of the polynomials directly controls the maximum number of limit cycles. The proofs rely crucially on the first integrals provided by complex potentials on each smooth piece.

    \item \textbf{Connection with differential geometry.} We show how the local behavior of principal curvature lines around an isolated umbilic of a constant mean curvature (CMC) immersion can be determined by a monomial holomorphic function $Q(z)=z^n$ (Section~\ref{sec:curvature}). The phase portrait of $\dot{z}=\overline{z^n}$ precisely describes the configuration of the principal foliations, linking the index of the umbilic to the exponent $n$.
\end{enumerate}
The paper is organized according to the contributions listed above. Sections \ref{sec:potencial}–\ref{sec:net} develop the theory of complex potentials and their basic applications. Section \ref{sec:cubic} applies this framework to classify cubic polynomial systems. Section \ref{sec:cycles} derives bounds for limit cycles in piecewise-smooth systems using the potential approach. Finally, Section \ref{sec:curvature} presents the geometric application to principal curvature lines on CMC surfaces.

\section{Complex Potential}\label{sec:potencial}

A \textit{complex potential} $\Omega(z)=\phi(x,y)+i\psi(x,y)$ is a holomorphic function defined in a simply connected domain $D\subseteq\C$.
The \textit{complex velocity field} associated with $\Omega$ is the vector field $f(z)=\overline{\Omega'(z)}.$ 

A direct consequence of the Cauchy–Riemann equations is the following theorem.

\begin{theorem} \label{teopsi}
The complex velocity field associated with the  potential $\Omega=\phi+i\psi$ is simultaneously a gradient and a Hamiltonian vector field.
More specifically $\overline{\Omega'}=\nabla \phi$ and $ \psi$ is a first integral.
 \end{theorem}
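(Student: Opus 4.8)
The plan is to reduce everything to a direct application of the Cauchy--Riemann equations, since $\Omega=\phi+i\psi$ is holomorphic and hence $\phi_x=\psi_y$ and $\phi_y=-\psi_x$ throughout $D$. First I would compute the complex derivative in the form $\Omega'(z)=\phi_x+i\psi_x$, and then use the second Cauchy--Riemann identity $\psi_x=-\phi_y$ to rewrite it as $\Omega'(z)=\phi_x-i\phi_y$. Conjugating gives $\overline{\Omega'(z)}=\phi_x+i\phi_y$, which under the identification of a complex number $u+iv$ with the planar vector $(u,v)$ is exactly $\nabla\phi=(\phi_x,\phi_y)$. This establishes the gradient assertion $\overline{\Omega'}=\nabla\phi$.

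For the Hamiltonian claim, I would read off the associated real system from $\dot z=\overline{\Omega'(z)}$, namely $\dot x=\phi_x$ and $\dot y=\phi_y$. Substituting the Cauchy--Riemann relations $\phi_x=\psi_y$ and $\phi_y=-\psi_x$ converts this into $\dot x=\psi_y$, $\dot y=-\psi_x$, which is precisely the Hamiltonian system with Hamiltonian function $\psi$. Thus the same vector field carries both structures, with $\phi$ playing the role of the potential and $\psi$ the role of the stream function, as anticipated by the fluid-dynamical interpretation.

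Finally, to confirm directly that $\psi$ is a first integral, I would differentiate $\psi$ along a trajectory, obtaining $\frac{d}{dt}\psi(x(t),y(t))=\psi_x\dot x+\psi_y\dot y=\psi_x\phi_x+\psi_y\phi_y$, and then apply the Cauchy--Riemann equations once more to get $\psi_x\phi_x+\psi_y\phi_y=-\phi_y\phi_x+\phi_x\phi_y=0$. Hence $\psi$ is constant along orbits, which is exactly the first-integral property.

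I do not expect any genuine obstacle here: the entire content is the bookkeeping of the Cauchy--Riemann equations together with the correct sign and conjugation conventions relating the complex quantity $\overline{\Omega'}$ to its planar vector. The only point requiring care is consistency of conventions --- in particular fixing the formula $\Omega'=\phi_x+i\psi_x$ and the identification $f=u+iv\leftrightarrow(u,v)$ --- so that the gradient and Hamiltonian identities emerge with matching signs.
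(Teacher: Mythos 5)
Your proposal is correct and follows essentially the same route as the paper: both proofs are pure Cauchy--Riemann bookkeeping, using the identity $\Omega'=\phi_x+i\psi_x=\phi_x-i\phi_y$ to read off $\overline{\Omega'}=\nabla\phi$ and then the relations $\phi_x=\psi_y$, $\phi_y=-\psi_x$ to exhibit the Hamiltonian form with stream function $\psi$. Your closing computation $\frac{d}{dt}\psi(x(t),y(t))=\psi_x\phi_x+\psi_y\phi_y=0$ is a small welcome addition, verifying the first-integral property explicitly rather than only citing the Hamiltonian structure.
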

\begin{proof} Consider the complex potential
$\Omega$  and  the complex velocity field associated $f$. Thus
$\Omega'=\overline{f}=\tilde{u}+i\tilde{v}$ is a holomorphic function
and
$\overline{\Omega'}=f=\tilde{u}+ i(-\tilde{v}).$
Using the Cauchy-Riemann equation for $\overline{f}$ we have
$\tilde{u}_y=(-\tilde{v})_x$
and thus we have that $f=\tilde{u}-i\tilde{v}$ is a gradient vector field. Besides
\[\tilde{u}+i\tilde{v}=\overline{f}=\Omega'=\phi_x+i\psi_x=\phi_x-i\phi_y\] and then 
$\tilde{u}=\phi_x$, $-\tilde{v}=\phi_y.$ Thus  $f=\nabla \phi$.  
To finish, using Cauchy-Riemann for $\Omega$, we also conclude that
$\tilde{u}=\psi_y$, $ -\tilde{v}=-\psi_x$ and therefore $f$ is a 
hamiltonian vector field with first integral $\psi$. \end{proof}

A basic property of a holomorphic function is the fact that the contour lines of its real and 
imaginary parts define orthogonal families of curves (see, e.g., \cite{Conway}):
\begin{proposition}Let $\Omega=\phi+i\psi$ be a complex potential in a simply connected domain $D$. 
If $z_0\in D$ and $\Omega'(z_0)\neq0$ then $\psi(x,y)=\psi(z_0)$ and $\phi(x,y)=\phi(z_0)$ 
intersect transversally at $z_0$.\end{proposition}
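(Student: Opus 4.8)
The plan is to show that the two level curves meet transversally at $z_0$ by verifying that their tangent (equivalently, gradient) directions are distinct there. The hypothesis $\Omega'(z_0)\neq 0$ is what guarantees both curves are genuinely smooth (nonsingular) at $z_0$ and that their normals are nonzero, so transversality reduces to a linear-independence check of the two gradients $\nabla\phi(z_0)$ and $\nabla\psi(z_0)$.

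First I would recall, exactly as in the proof of Theorem~\ref{teopsi}, that the Cauchy--Riemann equations for $\Omega=\phi+i\psi$ give $\phi_x=\psi_y$ and $\phi_y=-\psi_x$. From these one reads off the two gradient vectors
\[
\nabla\phi=(\phi_x,\phi_y)=(\psi_y,-\psi_x),\qquad \nabla\psi=(\psi_x,\psi_y).
\]
The next step is the key computation: their Euclidean inner product is
\[
\nabla\phi\cdot\nabla\psi=\psi_y\psi_x+(-\psi_x)\psi_y=0,
\]
so the gradients are orthogonal wherever they are defined. I would then note that $\Omega'=\phi_x+i\psi_x=\psi_y+i\psi_x$, so $|\Omega'|^2=\psi_y^{2}+\psi_x^{2}=|\nabla\psi|^{2}$, and likewise $|\nabla\phi|^2=\psi_y^2+\psi_x^2=|\Omega'|^2$. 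Hence the condition $\Omega'(z_0)\neq 0$ is precisely the statement that both $\nabla\phi(z_0)$ and $\nabla\psi(z_0)$ are nonzero.

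Putting these together finishes the argument: at $z_0$ we have two nonzero gradient vectors that are mutually orthogonal, hence linearly independent. By the implicit function theorem, $\phi=\phi(z_0)$ and $\psi=\psi(z_0)$ are smooth curves near $z_0$ whose tangent lines are the orthogonal complements of $\nabla\phi(z_0)$ and $\nabla\psi(z_0)$ respectively; since the normals are orthogonal and nonzero, the tangent lines are also distinct (in fact orthogonal), which is exactly transversal intersection at $z_0$.

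The computation is entirely routine once the Cauchy--Riemann relations are in hand, so I do not expect a substantive obstacle. The one point that deserves care, rather than being an obstruction, is making explicit that the nonvanishing of $\Omega'$ is what upgrades ``the gradients are orthogonal'' to ``the gradients are linearly independent and nonzero,'' since orthogonality alone is vacuous at a point where a gradient vanishes. Framing the conclusion as orthogonality of the two foliations (not merely transversality) is a natural and slightly stronger phrasing that the same one-line inner-product computation already delivers.
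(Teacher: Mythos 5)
Your proof is correct, and it is essentially the argument the paper has in mind: the paper does not write out a proof of this proposition at all, instead framing it as a ``basic property'' of holomorphic functions and citing Conway, while relying on exactly the Cauchy--Riemann manipulations you use (the same ones appearing in its proof of Theorem~\ref{teopsi}). Your two refinements are worth noting: you make explicit that $|\nabla\phi|=|\nabla\psi|=|\Omega'|$, so the hypothesis $\Omega'(z_0)\neq 0$ is precisely equivalent to both gradients being nonzero (the point where orthogonality alone would be vacuous), and you observe that the conclusion is in fact orthogonality of the two level curves, which is stronger than the stated transversality and is the property the paper actually uses later (e.g., in the corollary about $z'=if(z)$).
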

\begin{corollary} Let $\Omega=\phi+i\psi$ and $f=\overline{\Omega'}$ be 
a holomorphic potential
and its complex velocity field associated respectively. Then $\psi$ is a first integral of 
$z'=f(z)$ and $\phi$ is a first integral of $z'=if(z)$.
\end{corollary}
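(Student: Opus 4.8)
The plan is to deduce the corollary directly from Theorem~\ref{teopsi} together with the preceding proposition, without any fresh computation. Theorem~\ref{teopsi} already establishes that $\psi$ is a first integral of $z'=f(z)$ where $f=\overline{\Omega'}$, so the first half of the statement is immediate. The substance of the corollary is therefore the second half: that $\phi$ is a first integral of the rotated system $z'=if(z)$. My strategy is to observe that multiplication by $i$ rotates the velocity field by $90^\circ$, and that this is exactly the operation that interchanges the roles of $\phi$ and $\psi$, since the proposition guarantees their level curves are orthogonal wherever $\Omega'\neq 0$.

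\medskip

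Concretely, I would proceed as follows. First I would recall from the proof of Theorem~\ref{teopsi} the Cauchy--Riemann identities for $\Omega=\phi+i\psi$, namely $\phi_x=\psi_y$ and $\phi_y=-\psi_x$. A function $H$ is a first integral of a system $z'=g(z)$, with $g=(P,Q)$ in real coordinates, precisely when $\nabla H\cdot g = H_xP+H_yQ=0$ along trajectories. For the original field $f=(\tilde u,-\tilde v)$ the computation $\nabla\psi\cdot f=0$ is exactly the content of the theorem. Now the rotated field $if$ has real representation obtained by applying the rotation matrix to $f$: if $f=(P,Q)$ then $if=(-Q,P)$. The key step is then the short verification that $\nabla\phi\cdot(if)=-\phi_xQ+\phi_yP$ vanishes. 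Substituting the Cauchy--Riemann relations $\phi_x=\psi_y$, $\phi_y=-\psi_x$ converts this expression into $-\psi_yQ-\psi_xP=-\nabla\psi\cdot f$, which is zero by the first part. Thus $\phi$ is constant along the trajectories of $z'=if(z)$.

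\medskip

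An alternative, more conceptual route, which I would mention as a remark rather than carry out in full, is to note that $i f = i\overline{\Omega'}=\overline{-i\,\Omega'}=\overline{(i\Omega)'\cdot(-1)}$; more cleanly, $z'=if(z)$ is the complex velocity field associated with the potential $-i\Omega = \psi - i\phi$, whose imaginary part is $-\phi$. Applying Theorem~\ref{teopsi} to the potential $-i\Omega$ immediately yields that $-\phi$, and hence $\phi$, is a first integral. This reframing shows the corollary is nothing more than Theorem~\ref{teopsi} applied after a rotation of the potential, and it is the cleanest way to present the argument.

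\medskip

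I do not anticipate a genuine obstacle here, since the result is a formal consequence of orthogonality and the Cauchy--Riemann equations. The only point requiring mild care is bookkeeping of signs: one must track the conjugation in $f=\overline{\Omega'}$ and the effect of multiplication by $i$ consistently in real coordinates, so that the rotated field is correctly identified as $(-Q,P)$ rather than $(Q,-P)$. Getting this sign right is what ensures the orthogonal gradient $\nabla\phi$ aligns with the conserved direction of the rotated flow, and it is the single place where an error could silently invert the two families of level curves.
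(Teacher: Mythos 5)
Your proposal is correct, and its skeleton matches the paper's: the first half is quoted from Theorem~\ref{teopsi}, and the second half comes from the $90^\circ$ rotation interchanging the roles of $\phi$ and $\psi$. The difference is in how that second half is justified. The paper's proof is exactly the two-sentence geometric appeal you anticipated: $if$ is a $90^\circ$ rotation of $f$, and the level curves of $\phi$ and $\psi$ are orthogonal by the preceding proposition --- nothing more is written. You instead carry out the verification explicitly: identifying $if=(-Q,P)$ for $f=(P,Q)$ and computing $\nabla\phi\cdot(if)=-\phi_xQ+\phi_yP=-\nabla\psi\cdot f=0$ via the Cauchy--Riemann relations. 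This buys rigor (the paper's orthogonality argument, taken literally, only shows the trajectories of $if$ are tangent to level curves of $\phi$ where $\Omega'\neq 0$, and leaves the sign bookkeeping implicit) at the cost of a short computation. Your second, remark-level route --- observing that $if=\overline{(-i\Omega)'}$, so that $z'=if(z)$ is the velocity field of the potential $-i\Omega=\psi-i\phi$ and Theorem~\ref{teopsi} applies verbatim with first integral $-\phi$ --- is genuinely different from anything in the paper, and is arguably the cleanest of the three: it exhibits the corollary as the theorem itself applied to a rotated potential, with no coordinate computation at all.
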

\begin{proof}The integrability of the system $z'=f(z)$ has already been verified 
in the Theorem \ref{teopsi}. 
The integrability of the system $z'=if(z)$ follows immediately from the fact that the field 
$if(z)$ is a 90 degree rotation of the field $f(z)$ 
and from the fact that the level curves of $\phi$ and $\psi$ are orthogonal.\end{proof}

Let  $f=u+iv$ be a complex representation of the velocity field
defined in a simply connected domain of $\C.$
\begin{itemize}
	\item[(a)] We say that  $f$ is  \textit{irrotational} if $\operatorname{rot}(f)=(0,0,v_x-u_y)=(0,0,0)$. In particular, 
	this condition implies that $f$ is a gradient vector field.
	\item[(b)] We say that $f$ is  \textit{incompressible} if  $\operatorname{div}(f)=u_x+u_y=0.$
	\item[(c)] If  $f$ is irrotational and  incompressible then we say that $f$ is  
	\textit{ideal}.
	\end{itemize}	
\begin{theorem} $f$ is the complex representation of the velocity field of an ideal fluid if and only if  $\overline{f}$ is holomorphic.\end{theorem}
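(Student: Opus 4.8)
The plan is to prove both implications by directly unwinding the definitions of ``ideal'' and ``holomorphic'' in terms of the Cauchy--Riemann equations. Writing $f=u+iv$, recall that $f$ is ideal exactly when it is both irrotational, $v_x-u_y=0$, and incompressible, $u_x+v_y=0$. The key observation is that these two scalar equations are precisely the Cauchy--Riemann equations for the \emph{conjugate} function $\overline{f}=u-iv$, not for $f$ itself. The sign flip introduced by conjugation is exactly what converts the fluid-dynamical conditions into the analyticity conditions, and making this bookkeeping transparent is the heart of the argument.

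First I would set $g=\overline{f}=U+iV$ with $U=u$ and $V=-v$, and write out the Cauchy--Riemann equations for $g$, namely $U_x=V_y$ and $U_y=-V_x$. Substituting $U=u$, $V=-v$ gives $u_x=-v_y$ and $u_y=v_x$. I would then compare these directly with the defining conditions for an ideal fluid. Incompressibility reads $u_x+v_y=0$, i.e. $u_x=-v_y$, which matches the first Cauchy--Riemann equation for $g$; irrotationality reads $v_x-u_y=0$, i.e. $u_y=v_x$, which matches the second. (I note in passing that the excerpt's displayed definition of incompressibility, $\operatorname{div}(f)=u_x+u_y=0$, contains a typographical slip for the correct $u_x+v_y=0$; I would use the latter so that the divergence of the vector field $(u,v)$ is genuinely what appears.) This establishes the logical equivalence ``$f$ irrotational and incompressible'' $\iff$ ``$g=\overline{f}$ satisfies the Cauchy--Riemann equations'' as a purely algebraic matching of two pairs of equations.

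To close the proof I would invoke the Looman--Menchoff criterion recalled in the introduction: since the real and imaginary parts of $\overline{f}$ are continuous with existing first-order partials, satisfaction of the Cauchy--Riemann equations is equivalent to holomorphy of $\overline{f}$. Chaining the two equivalences yields $f$ ideal $\iff$ $\overline{f}$ holomorphic, which is the claim. The forward direction ($f$ ideal $\Rightarrow$ $\overline{f}$ holomorphic) and the reverse direction both follow from the same chain read in opposite directions, so no separate argument is needed for each.

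The main obstacle is conceptual rather than computational: one must resist the natural temptation to check the Cauchy--Riemann equations for $f$ itself, which would instead characterize when $f$ is anti-holomorphic, and keep careful track of the sign on $v$ so that the correct function $\overline{f}$ is the one that turns out to be holomorphic. Beyond that sign bookkeeping, the regularity hypothesis needs a word of care: the equivalence with holomorphy via Looman--Menchoff presupposes continuity of $u,v$ and existence of their partials on the simply connected domain, which I would state explicitly as standing assumptions so that the algebraic equivalence of the two equation pairs can be promoted to an equivalence between the fluid-dynamical and complex-analytic properties.
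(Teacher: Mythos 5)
Your proof is correct and follows essentially the same route as the paper: both arguments reduce ``ideal'' (irrotational plus incompressible) to the Cauchy--Riemann equations for $\overline{f}=u-iv$ by direct sign bookkeeping, with your appeal to Looman--Menchoff merely making explicit the regularity step that the paper's definition of holomorphy already builds in. Your parenthetical observation that the paper's displayed divergence $u_x+u_y$ is a typo for $u_x+v_y$ is also accurate.
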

\begin{proof}The proof is a direct consequence of the Cauchy-Riemann equations
	\[f=u+iv\quad\mbox{ideal}\Leftrightarrow v_x=u_y,\quad u_x=-v_y\Leftrightarrow \overline{f}=u-iv\quad\mbox{satisfies CR}.\]\end{proof}

    \begin{figure}[h]
	\centering 
	\includegraphics[width=10cm, height=6cm]{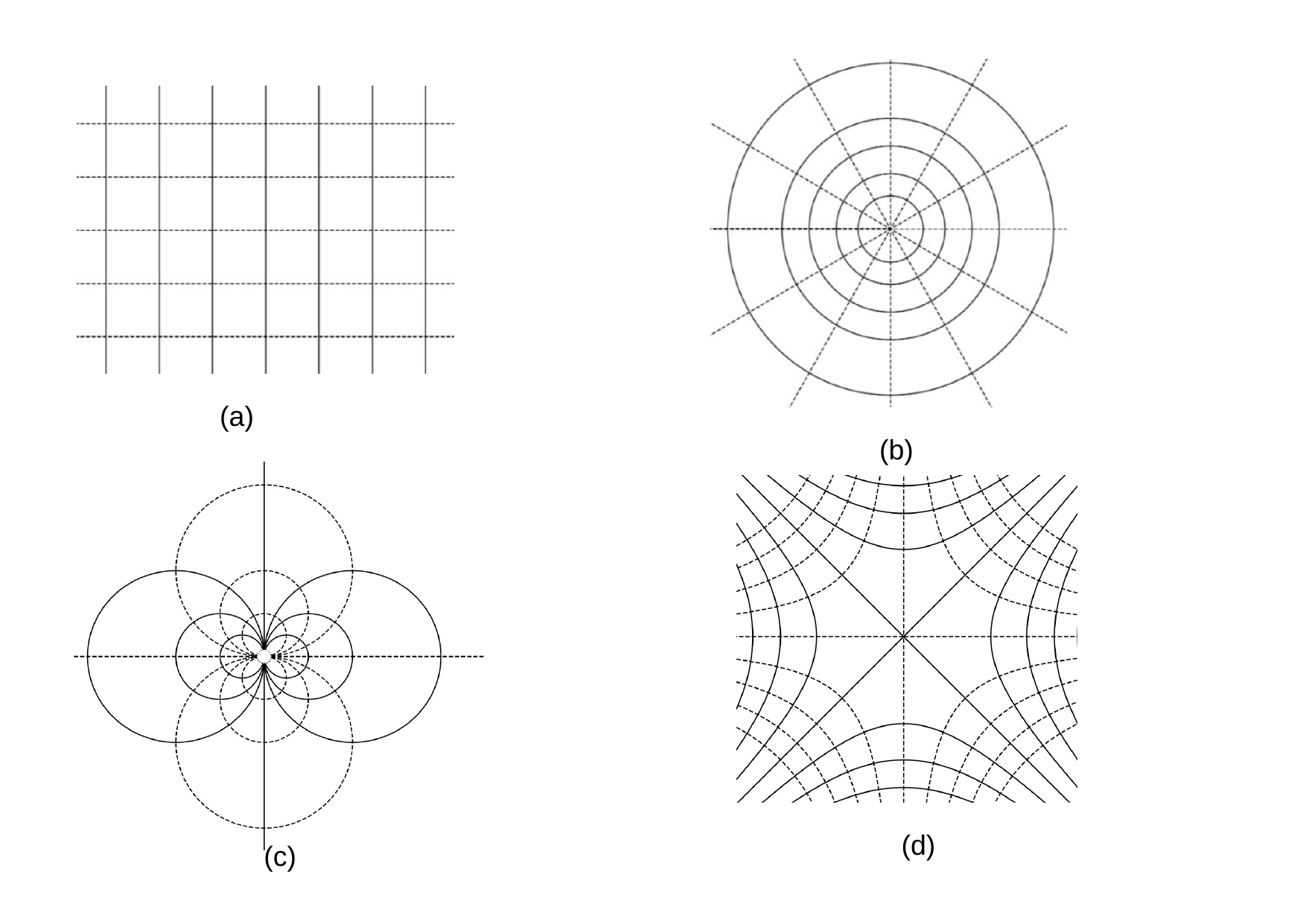} 
	\caption{Equipotential Lines and Streamlines of: (a) $f(z)=1$, (b) $f(z)=z$, (c) $f(z)=z^2$ and (d) $f(z)=\dfrac{1}{z}$.}\label{ffig1}
\end{figure}

Any holomorphic function  $\Omega=\phi+i\psi$ can be interpreted as a 
complex potential for the two-dimensional flow of an ideal fluid. The curves $\phi=constant$ are 
called \textit{equipotential curves} and the curves $\psi=constant$ are called \textit{streamlines}. The complex velocity is 
$f=\overline{\Omega'}$, that is, $\Omega$ is a primitive of $\overline{f}.$ In Figure~\ref{ffig1} we show equipotential lines and streamlines for some elementary 
complex potentials: (a) $f(z)=1$, (b) $f(z)=z$, (c) $f(z)=z^2$, and (d) $f(z)=\dfrac{1}{z}$.

A less trivial example is given by $f(z) = \frac{z^2}{1+z}$, whose equipotential 
and streamlines are depicted in Figure~\ref{ffig2}.

\begin{figure}[h]
	\centering 
	\includegraphics[width=8cm, height=4cm]{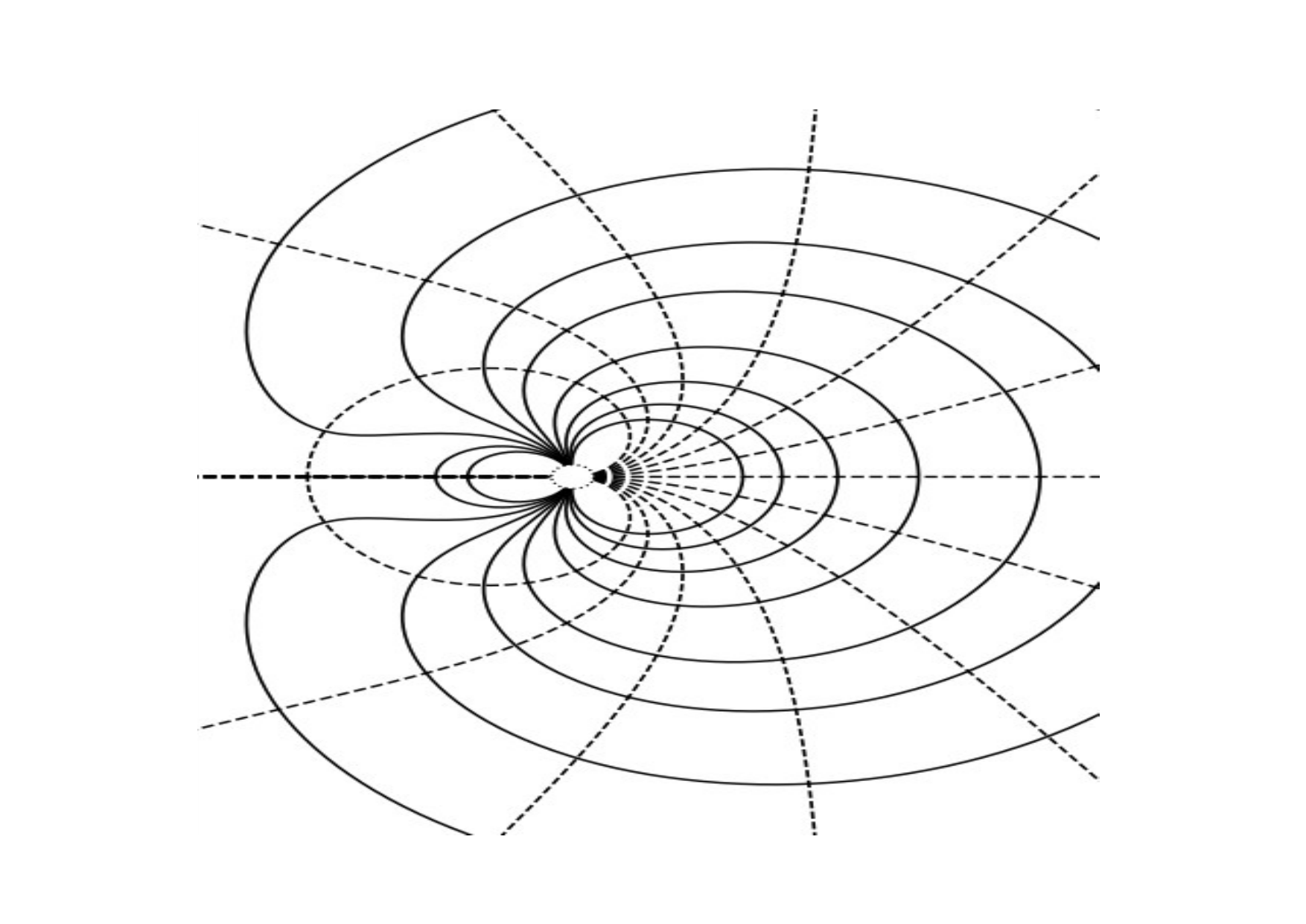} 
	\caption{Equipotential Lines and Streamlines of $f(z)=\dfrac{z^2}{1+z}$.}\label{ffig2}
\end{figure}
            
\begin{example}
Let $k$ and $A$ be nonzero real constants.
\begin{itemize}
\item[(a)]
The vector field
$f(z)=\frac{k}{2\pi}\frac{z-z_0}{|z-z_0|^2}$
is the complex representation of the velocity field of an ideal fluid. Indeed,
$\overline{f(z)}=-\frac{k}{2\pi}\frac{1}{z-z_0}$
is holomorphic in $\C\setminus\{z_0\}$.

\item[(b)]
The vector field $f(z)=k\overline{z}$ also represents an ideal fluid flow, since
$\overline{f(z)}=kz$ is holomorphic in $\C$.

\item[(c)]
For the complex potential $\Omega(z)=\frac{k}{2}z^2$ we have
$f(z)=\overline{\Omega'(z)}=k\overline{z}$.
Writing $z=x+iy$, the streamlines are given by $xy=\text{constant}$.

\item[(d)]
If $\Omega(z)=Az$, then $f(z)=\overline{\Omega'(z)}=A$, and the streamlines are
$y=\text{constant}$.
\end{itemize}
\end{example}

\begin{definition} We say that $f$ is anti-holomorphic if $\overline{f}$ is holomorphic.
\end{definition}
In other words, $f$ is the complex representation of the velocity field of an ideal fluid if 
and only if $f$ is anti-holomorphic.
	
\begin{theorem} \label{teoalg}The following statements about polynomial potentials  and velocity 
fields are valid.
\begin{itemize}
\item[(a)]If $f$ is  an anti-holomorphic polynomial  map in some neighborhood of $z_0\in\C$ 
then  all its trajectories are algebraic.
\item[(b)] If $\Omega$ is a polynomial function, then all trajectories of $f=\overline{\Omega'}$ are algebraic.
\end{itemize} \end{theorem}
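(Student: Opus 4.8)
The plan is to reduce both statements to Theorem~\ref{teopsi} by exhibiting a polynomial first integral whose level sets are algebraic curves containing the orbits. Starting with part (b), suppose $\Omega$ is a polynomial in $z$ and put $f=\overline{\Omega'}$. Theorem~\ref{teopsi} immediately supplies the first integral $\psi=\Im\,\Omega$ of $\dot z=f(z)$, so every trajectory of the system is contained in some level set $\{\psi(x,y)=c\}$. The substitution $z=x+iy$ into the finitely many monomials of $\Omega$, followed by collecting the imaginary part, shows that $\psi(x,y)$ is a genuine real polynomial in the two real variables $x$ and $y$. Consequently each level set $\{\psi=c\}$ is a real algebraic curve, and every orbit, being contained in such a set, is algebraic.

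For part (a), I would manufacture the potential by antidifferentiation and then invoke part (b). Since $f$ is an anti-holomorphic polynomial, $\overline f$ is a holomorphic polynomial, so the primitive $\Omega(z)=\int \overline{f}\,dz$ is again a polynomial in $z$ and satisfies $\Omega'=\overline f$, equivalently $f=\overline{\Omega'}$. This places us exactly under the hypothesis of part (b), and the same conclusion follows verbatim: the orbits lie on the algebraic level curves of the polynomial $\psi=\Im\,\Omega$.

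The only point that genuinely needs care is the claim that $\psi=\Im\,\Omega$ is polynomial in $(x,y)$, which is what upgrades the first integral from merely smooth (as delivered abstractly by Theorem~\ref{teopsi}) to algebraic. This is elementary rather than deep: expanding a complex polynomial in powers of $x+iy$ and separating real and imaginary parts produces bivariate real polynomials $\phi$ and $\psi$, so no transcendental terms can survive. I do not expect any serious obstacle beyond fixing the convention that an \emph{algebraic trajectory} means an orbit contained in a real algebraic curve; with that reading, the two assertions are direct corollaries of the polynomial nature of $\Omega$ together with the integrability provided by Theorem~\ref{teopsi}.
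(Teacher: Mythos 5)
Your proposal is correct and follows essentially the same route as the paper: part (a) is reduced to part (b) by forming the polynomial potential $\Omega=\int\overline{f}\,dz$, and part (b) rests on the observation that $\psi=\Im\,\Omega$ is a polynomial first integral (via Theorem~\ref{teopsi}), whose algebraic level curves contain the orbits. You merely present the two parts in the opposite order and spell out the elementary verification that $\Im\,\Omega$ is a bivariate real polynomial, which the paper leaves implicit.
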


\begin{proof} In fact, there exists $k\in\mathbb{N}$
such that $\overline{f(z)}=\sum_0^k a_n(z-z_0)^n$  and thus $\Omega=\int\overline{f}dz$ 
is a polynomial potential and therefore the trajectories of $f$ are algebraic. To prove  (b), it suffices to see that $\psi=\Im(\Omega)$ is a first polynomial integral.\end{proof}

\begin{proposition}Let $f$ be a holomorphic map defined in an open subset $U\subseteq \C$. 
Then the streamlines of $f$ and of $\frac{1}{\overline{f}}$ are the same in $U\setminus\{z:f(z)=0\}.$
\end{proposition}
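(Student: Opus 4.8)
The plan is to reduce the statement to the elementary observation that $1/\overline{f}$ is, at every point where $f\neq 0$, a \emph{positive real multiple} of $f$. First I would rewrite $\frac{1}{\overline{f}}=\frac{f}{f\overline{f}}=\frac{f}{|f|^{2}}$, which is legitimate precisely on $U\setminus\{z:f(z)=0\}$ since there $|f|^{2}>0$. Thus the two velocity fields $f$ and $1/\overline{f}$ are related by $\frac{1}{\overline{f}}=\frac{1}{|f|^{2}}\,f$, with the scalar factor $1/|f|^{2}$ smooth and strictly positive on the punctured domain.

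Next I would invoke the standard fact that two vector fields differing by a strictly positive scalar function have identical oriented trajectories, differing only by a time reparametrization: if $z(t)$ solves $\dot z=f(z)$ then a reparametrization $\tilde z(s)=z(t(s))$ with $\frac{dt}{ds}=1/|f(z)|^{2}$ solves $\dot{\tilde z}=\frac{1}{\overline{f}}(\tilde z)$, and conversely. Since the streamlines are by definition the (unparametrized, oriented) orbits of the field, the orbits of $f$ and of $1/\overline{f}$ coincide as curves in $U\setminus\{z:f(z)=0\}$.

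To connect this with the potential description used throughout the section, I would also note that $g:=1/\overline{f}$ is anti-holomorphic: indeed $\overline{g}=1/f$ is holomorphic on $U\setminus\{z:f(z)=0\}$ because $f$ is. Hence $g$ admits the complex potential $\Omega=\int\overline{g}\,dz=\int\frac{1}{f}\,dz$, and by Theorem~\ref{teopsi} its streamlines are the level curves of $\Im\Omega$. These are exactly the level curves $\Im\int\frac{1}{f}\,dz=\text{const}$ that describe the trajectories of the holomorphic system $\dot z=f(z)$, i.e.\ the generalized potential of $f$. Since both fields produce the level sets of one and the same function $\Im\int\frac{1}{f}\,dz$, their streamlines agree.

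I do not anticipate a genuine obstacle here; the only points requiring care are restricting to $U\setminus\{z:f(z)=0\}$ so that $|f|^{2}>0$ and $1/f$ is well defined, and checking that orientation is preserved. The latter is automatic because the factor $1/|f|^{2}$ is strictly positive, so the oriented streamlines — not merely the underlying unoriented foliations — coincide.
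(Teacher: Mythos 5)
Your proposal is correct and its core argument is exactly the paper's: the proof there is the one-line observation that $f=|f|^{2}\,\frac{1}{\overline{f}}$, so the two fields differ by a strictly positive scalar factor and hence share the same oriented streamlines. Your additional third paragraph (recovering the common streamlines as level sets of $\Im\int\frac{1}{f}\,dz$ via the potential) is a valid bonus consistency check, but it is not needed and is not part of the paper's argument.
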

\begin{proof}This immediately follows from the fact that the velocity fields 
differ by multiplying a positive function $f=|f|^2\frac{1}{\overline{f}}.$	\end{proof}
	
We now make some remarks on the existence of primitives. A domain $U\subseteq\C$ is said to be \textit{star--shaped} with respect to a point $z_0\in U$ if, for every $z\in U$, the straight line segment joining $z_0$ to $z$ is entirely contained in $U$. For instance, the slit plane $\C\setminus(-\infty,0]$ is star--shaped with respect to any point on the positive real axis. A basic property of holomorphic functions $f:U\to\C$ is that if $U$ is star--shaped, then $f$ admits a primitive in $U$.

 \begin{proposition}\label{star}
Let $f$ be a holomorphic function defined on a punctured disk
$D=D(0,R)\setminus\{0\}$, where either $0$ is the single zero of $f$ in $D$, or $0$ is an isolated singularity of $f$ and $f$ has no zeros in $D$. Let $L$ be a ray starting at $0$. Then both $f$ and $1/f$ admit primitives in $U=D\setminus L$.
\end{proposition}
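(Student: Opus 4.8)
\emph{Proof plan.} The strategy is to let the basic property recalled immediately before the statement do all the work, once its two hypotheses are verified. Concretely, I would show that $U=D\setminus L$ is star--shaped and that \emph{both} $f$ and $1/f$ are holomorphic on $U$; the existence of primitives then follows at once from the fact that a holomorphic function on a star--shaped domain admits a primitive. Thus the proof splits into one genuinely geometric claim (star--shapedness) and one bookkeeping check (locating the zeros of $f$).

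\emph{Star-shapedness of $U$.} Since multiplication by a unimodular constant is a conformal automorphism of $D(0,R)$ that carries rays from $0$ to rays from $0$ and preserves holomorphy, I may assume after such a rotation that $L$ is the nonpositive real axis, $L=\{-t:t\ge 0\}$. I claim $U$ is star--shaped with respect to any point $z_0=c$ with $0<c<R$. Indeed, for $z=x+iy\in U$ consider the segment $w(t)=(1-t)z_0+tz$, $t\in[0,1]$. Because $z_0$ is real, $\Im w(t)=t\,y$. If $y\neq 0$, this vanishes only at $t=0$, where $w(0)=c>0\notin L$; hence the segment meets the real axis only at that endpoint and otherwise lies strictly in the upper or lower half--plane, so it never meets $L$. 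If $y=0$, then $z\in U$ forces $x>0$, and the whole segment lies on the positive real axis, again avoiding $L$. In every case the segment stays in the convex disk $D(0,R)$ and off $L$, hence inside $U$. This is the only step that is not routine, and it is exactly where the hypothesis that $L$ is a \emph{ray emanating from the puncture} is used.

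\emph{Holomorphy of $f$ and $1/f$.} Since $U\subseteq D$, the function $f$ is holomorphic on $U$. For $1/f$ I must verify that $f$ has no zeros on $U$. In the first case the only zero of $f$ is at $0$, which lies on $L$ and is therefore removed in passing to $U$; in the second case $f$ has no zeros in $D$ at all. Either way $f$ is nowhere vanishing on $U$, so $1/f$ is holomorphic there. Applying the star--shaped primitive property separately to $f$ and to $1/f$ on $U$ then completes the argument. I expect the star--shapedness claim to be the main obstacle, though a purely topological alternative would be to note that slitting the punctured disk along a radial ray kills the only generator of its fundamental group, making $U$ simply connected; I would nonetheless prefer the star--shaped route, as it invokes exactly the criterion set up in the paragraph preceding the statement.
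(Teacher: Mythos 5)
Your proposal is correct and follows essentially the same route as the paper: the paper's proof simply asserts that $U=D\setminus L$ is star--shaped with respect to any point on the ray opposite to $L$ and that $1/f$ (and $f$) is holomorphic there, then invokes the primitive property for star--shaped domains. You supply the details the paper leaves implicit (the rotation normalization, the $\Im w(t)=ty$ segment argument, and the zero bookkeeping), all of which are sound.
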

\begin{proof}
The set $U=D\setminus L$ is star--shaped with respect to any point on the opposite side of $L$, and since $1/f$ is holomorphic in $U$, the result follows.
\end{proof}
         
\begin{definition}Let $f$ be a holomorphic map as in Proposition \ref{star}. We say that 
$\Omega=\int\frac{1}{f}dz$ is a potential of $f$ in $U.$
\end{definition}
	
Let us note that the above definition is natural. In fact, as $f$ and of $\frac{1}{\overline{f}}$
have the same streamlines and  $\frac{1}{\overline{f}}$  is the velocity field associated with the potential 
$\Omega(z)=\int\overline{\frac{1}{\overline{f}}}dz=\int {\frac{1}{f}}dz.$

\begin{theorem} \label{teofn}The complex potentials of the velocity fields in normal form are in the  following table.\end{theorem}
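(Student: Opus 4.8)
The plan is to read off each row of the table by specializing the definition $\Omega=\int\frac{1}{f}\,dz$ to the four normal forms listed in the introduction, namely $f(z)=1$ (regular point), $f(z)=z$ (simple zero), $f(z)=z^{n}/(1+\lambda z^{n-1})$ (multiple zero of order $n$, with $\lambda=\operatorname{Res}(1/f,z_0)$), and $f(z)=1/z^{n}$ (pole of order $n$), and then integrating explicitly. In each case the first step is to rewrite $1/f$ in a form that integrates termwise. The regular and pole cases are immediate: $1/f=1$ gives $\Omega=z$, while $1/f=z^{n}$ gives $\Omega=z^{n+1}/(n+1)$. The simple-zero case $f=z$ yields $1/f=1/z$ and hence $\Omega=\log z$.

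The only case requiring an algebraic preliminary is the multiple zero, where I would perform the partial-fraction (equivalently, long-division) step
\[\frac{1}{f(z)}=\frac{1+\lambda z^{n-1}}{z^{n}}=\frac{1}{z^{n}}+\frac{\lambda}{z},\]
so that integrating termwise produces the potential $\Omega(z)=-\frac{1}{(n-1)\,z^{n-1}}+\lambda\log z$. It is worth noting that the coefficient of the logarithmic term is exactly the residue $\lambda=\operatorname{Res}(1/f,0)$ appearing in the normal form; this is consistent with the residue being precisely the obstruction to single-valuedness of the primitive.

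Since the integrations are elementary, the only point demanding care — and the closest thing to an obstacle — is the existence and single-valuedness of the primitive. Whenever $1/f$ carries a simple pole at the origin (the simple-zero and multiple-zero cases), the integral contains a $\log$ term and is therefore multivalued on a punctured disk. This is exactly the situation addressed by Proposition \ref{star}: passing to the slit domain $U=D\setminus L$ renders $U$ star-shaped, so that $1/f$ admits a genuine holomorphic primitive there, and each entry of the table is to be understood as a single-valued branch on $U$. With this convention fixed, one verifies directly that $\Omega'=1/f$ in every row by differentiation, which completes the proof.
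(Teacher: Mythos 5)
Your proposal is correct and takes essentially the same route as the paper: the paper's entire proof is ``direct computation of $\Omega(z)=\int \frac{1}{f}\,dz$ and separation into real and imaginary parts,'' and your termwise integrations (including the decomposition $\frac{1+\lambda z^{n-1}}{z^n}=\frac{1}{z^n}+\frac{\lambda}{z}$ and the appeal to Proposition~\ref{star} for single-valuedness on the slit domain) carry out exactly that computation. The only step you leave implicit is the final conversion to polar form, $z^{1-n}=(x^2+y^2)^{\frac{1-n}{2}}e^{i(1-n)\arctan(y/x)}$, which is what turns your complex expressions into the $\phi(x,y)$ and $\psi(x,y)$ entries actually recorded in the table.
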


\renewcommand{\arraystretch}{1.25}
\begin{center}
\begin{tabular}{l c l}
\toprule
$f(z)$ & Fig. & $\Omega(z)=\phi(x,y)+i\psi(x,y)$ \\
\midrule
\multirow{2}{*}{$z^n,\; n\in\mathbb{Z}$}
& \multirow{2}{*}{\eqref{ffig1}}
& $\displaystyle 
\phi(x,y)=\frac{(x^2+y^2)^{\frac{1-n}{2}}
\cos\!\big((1-n)\arctan\frac{y}{x}\big)}{1-n}$ \\[0.8ex]
& 
& $\displaystyle 
\psi(x,y)=\frac{(x^2+y^2)^{\frac{1-n}{2}}
\sin\!\big((1-n)\arctan\frac{y}{x}\big)}{1-n}$ \\
\midrule
\multirow{2}{*}{$\displaystyle \frac{z^n}{1+z^{n-1}}$}
& \multirow{2}{*}{\eqref{ffig2}}
& $\displaystyle 
\phi(x,y)=\frac{1}{2}\log(x^2+y^2)
+\frac{(x^2+y^2)^{\frac{1-n}{2}}
\cos\!\big((1-n)\arctan\frac{y}{x}\big)}{1-n}$ \\[0.8ex]
&
& $\displaystyle 
\psi(x,y)=\frac{(x^2+y^2)^{\frac{1-n}{2}}
\sin\!\big((1-n)\arctan\frac{y}{x}\big)}{1-n}
+\arctan\frac{y}{x}$ \\
\bottomrule
\end{tabular}
\end{center}
\begin{proof}
The expressions follow from direct computation of
$\Omega(z)=\int \frac{1}{f}\,dz$ and separation into real and imaginary parts.
\end{proof}
In summary, we have that, given a holomorphic function $\Omega =\phi+i\psi$, 
defined in a simply connected domain, the level curves of the imaginary part $\psi =c$, 
are the integral curves of the vector field $g=\overline{\Omega'}$, which are also the integral curves of the 
field $f=\frac{1}{\Omega'}$ in the region where $\Omega'(z)\neq0$.\\

%

\section{Circulation, Net Flow and Complex Time}\label{sec:net}

Complex potentials provide a local description of the dynamics of the system
$z' = f(z)$ whenever a primitive of $1/f$ exists. In such cases, the imaginary part
of the primitive plays the role of a stream function, whose level sets the trajectories of the flow. This description, however, strongly depends on the
existence of a single-valued primitive and therefore may fail in multiply connected
domains or in the presence of singularities.

Circulation and net flow offer a natural extension of this framework. By considering
line integrals of the conjugate field $\overline{f}$ along closed curves, these
quantities capture global features of the flow that are invisible to local potentials.

\subsection{Circulation and net flow}\label{sec:net1}

Complex potentials provide a local description of the dynamics of the system
$z' = f(z)$ whenever a primitive of $1/f$ exists. In such cases, the imaginary part
of the primitive plays the role of a stream function, whose level sets the trajectories of the flow. This description, however, strongly depends on the
existence of a single-valued primitive and therefore may fail in multiply connected
domains or in the presence of singularities.

Circulation and net flow offer a natural extension of this framework. By considering
line integrals of the conjugate field $\overline{f}$ along closed curves, these
quantities capture global features of the flow that are invisible to local potentials.

\textit{Circulation} refers to the following question: how much the 
flow tends to flow around a curve $C$. The \textit{net flow} is the difference between the entry rate 
and the exit rate of the fluid in the region limited by $C$. Intuitively, if the flow rotates counterclockwise 
around the $C$ curve, we will have positive circulation. If the flow enters the region surrounded by $C$ 
the circulation will be zero and the net flow will be negative  and if the flow leaves the region surrounded
 by $C$ the circulation will be zero and the net flow will be positive (for example a source).

Let us rigorously define these concepts. The \textit{circulation} of $f=u+iv$ is defined by
$\int_CfTds=\int_C udx+vdy.$ The \textit{net flow} is defined by
$\int_C fN ds=\int_C udy-vdx.$
We can combine the two formulas into a single expression
\[\int_C \overline{f}dz=[\mbox{circulation}]+\textit{i}.[\mbox{net flow}].\]	

\begin{example}  Consider the following examples.
\begin{itemize}
\item[(a)] Taking $C$ as the curve $|z|=1$. $f(z)=(z-1)^2$ generates a 
movement with negative circulation and zero net flow as it does not cross $C$. 
In fact, $\int_C \overline{f} dz=-4\pi.$
\item[(b)] Let us take $f(z)=(1+i)z$ and $C$ given by the counterclockwise curve $|z|=1$.
We have $\int_C \overline{f} dz=2\pi +2\pi i.$
\item[(c)] Let us take $f(z)=\overline{\cos z}$ and $C$ given by the counterclockwise square of vertices $1,i,-1,-i$.
We have $\int_C \overline{f} dz=\int_C\cos z dz= 0,$ by Cauchy's theorem.
\item[(d)] Let us take $f(z)=\frac{k}{\overline{z}-\overline{z_0}}$ and $C$ given by the counterclockwise curve around $z_0$. We have, denoting $k=a-bi,$ and using the  Cauchy's integral formula $\int_C \overline{f} dz=2\pi b+2\pi a i.$
\end{itemize}
\end{example}

\begin{example}  \normalfont{We can use circulation and net flow to analyze the flow of linear systems. 
Let $a,b\in\R$ be constant and $f(z)=(a+ib)z$ be a linear velocity field. 
Let $C_{\e}$ be the curve parameterized by $C_{\e}(t)=\e e^{it}$ where $\e>0$ and $t\in[0,2\pi]$.
Thus, we have that 
\[\int_{C_{\e}} \overline{f}dz=
\int_{C_{\e}}(a-bi)\overline{z}dz=\e(b+ai).\]
Then, we conclude that the sign of $a$ determines whether the equilibrium point at the origin is attracting
 ($a<0$) or a repelling ($a>0$). Furthermore, the sign of $b$ determines the direction of rotation. 
If $b>0$ the direction is counterclockwise and if $b<0$ the direction is clockwise.}
\end{example}

\subsection{Holomorphic Differential Equations with Complex Time}\label{sec:net2}
Denote $z=x+iy$, $ f=u+iv$,  $T=t+is$,
and consider the differential equation 
\[z'(T)=f(z(T)).\]
Assume that $f$ is holomorphic in an open subset $U\subseteq\C$ and $z(T)$ is a holomorphic solution 
also defined in an open subset $V\subseteq\C$.
Due to the Cauchy Riemann equations we must have
\[x_t=y_s,\quad x_s=-y_t.\]
Since $z'=u+iv=x_t+iy_t=y_s-x_si$ the equation corresponds to two systems
\begin{equation}\Re:  x_t=u,\quad y_t=v; \quad
 \Im:  x_s=-v ,\quad y_s=u.\label{tempocomplexo}\end{equation}

 Geometrically, these two systems describe flows along the two orthogonal families of curves associated with the complex potential $\Omega$: system $\Re$ flows along streamlines ($\psi=$ constant), while system $\Im$ flows along equipotential lines ($\phi=$ constant). This explains why, in Figure~\ref{ffig1}, the families of curves correspond to trajectories with real and pure imaginary time, respectively.
 \begin{proposition}Let $\Omega=\phi+i\psi$ be the complex potential of  $f=u+iv$, 
 defined in a simply connected domain $D$. 
 Then $\psi$ is a first integral
 of system \eqref{tempocomplexo}-$\Re$ and $\phi$ is a first integral of system \eqref{tempocomplexo}-$\Im$.
 \end{proposition}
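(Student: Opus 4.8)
The plan is to verify the proposition by a direct application of the chain rule, showing that $\psi$ is constant along every trajectory of \eqref{tempocomplexo}-$\Re$ and that $\phi$ is constant along every trajectory of \eqref{tempocomplexo}-$\Im$. Since here $f$ is holomorphic, ``complex potential of $f$'' is understood in the sense of the potential $\Omega=\int\frac{1}{f}\,dz$, so the defining relation between $\Omega=\phi+i\psi$ and $f=u+iv$ is $\Omega'=1/f$; this is the relation I would exploit throughout.

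First I would translate $\Omega'=1/f$ into pointwise identities for the partial derivatives of $\phi$ and $\psi$. Writing $\Omega'=\phi_x+i\psi_x$ and $1/f=\overline{f}/|f|^2=(u-iv)/(u^2+v^2)$, and invoking the Cauchy--Riemann equations $\phi_x=\psi_y$, $\phi_y=-\psi_x$ for $\Omega$, I obtain
\[\phi_x=\psi_y=\frac{u}{u^2+v^2},\qquad \phi_y=-\psi_x=\frac{v}{u^2+v^2}.\]
These are the only relations needed to feed into the chain rule.

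Next I would carry out the two computations. Along \eqref{tempocomplexo}-$\Re$, where $x_t=u$ and $y_t=v$, the chain rule gives $\frac{d}{dt}\psi=\psi_x x_t+\psi_y y_t=\frac{-v}{u^2+v^2}\,u+\frac{u}{u^2+v^2}\,v=0$, so $\psi$ is a first integral. Symmetrically, along \eqref{tempocomplexo}-$\Im$, where $x_s=-v$ and $y_s=u$, I get $\frac{d}{ds}\phi=\phi_x x_s+\phi_y y_s=\frac{u}{u^2+v^2}(-v)+\frac{v}{u^2+v^2}\,u=0$, so $\phi$ is a first integral. In both cases the positive factor $1/(u^2+v^2)$ is harmless on the region $f\neq0$ where $\Omega$ is defined.

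There is no deep obstacle: the statement is essentially a restatement, in the complex--time formulation, of Theorem~\ref{teopsi} together with the orthogonality of the level families of $\phi$ and $\psi$. The only point requiring care is sign bookkeeping---tracking precisely which partial of $\phi$ or $\psi$ equals $\pm u/(u^2+v^2)$ or $\pm v/(u^2+v^2)$---because the systems \eqref{tempocomplexo}-$\Re$ and \eqref{tempocomplexo}-$\Im$ differ by the $90^\circ$ rotation $f\mapsto if$ encoded in the swapped, sign-changed right-hand sides. Matching the Cauchy--Riemann signs against the signs in \eqref{tempocomplexo} is exactly what makes each derivative telescope to zero, and it is the one place a careless computation could fail.
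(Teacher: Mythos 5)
Your proof is correct and follows essentially the same route as the paper: both start from $\Omega'=1/f$, use the Cauchy--Riemann equations to obtain $\phi_x=\psi_y=u/(u^2+v^2)$ and $\phi_y=-\psi_x=v/(u^2+v^2)$, and conclude that $\psi$ and $\phi$ are conserved along \eqref{tempocomplexo}-$\Re$ and \eqref{tempocomplexo}-$\Im$ respectively. The only cosmetic difference is that the paper phrases the final step as recognizing each system to be a positive multiple $(u^2+v^2)$ of a Hamiltonian field, while you unwind that step into the explicit chain-rule computation---the two are identical in content.
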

\begin{proof}
It follows directly from the fact that $\Omega=\int\frac{1}{f}dz$ and so 
\[\Omega'=\phi_x+i\psi_x=\dfrac{u}{u^2+v^2}-i\dfrac{v}{u^2+v^2}=\psi_y-i\phi_y\]
and therefore
\[\Re: \left\{ \begin{array}{l} x_t=(u^2+v^2)\psi_y \\ y_t=-(u^2+v^2)\psi_x\end{array},\right. \quad
 \Im: \left\{ \begin{array}{l} x_s=-(u^2+v^2)\phi_y\\ y_s=(u^2+v^2)\phi_x\end{array}\right. .\]
 As $\psi$ and $\phi$ are first integrals of the systems $\Re/(u^2+v^2)$ and $\Im/(u^2+v^2)$, 
 then they are also of \eqref{tempocomplexo}-$\Re$ and \eqref{tempocomplexo}-$\Im$, respectively. 
\end{proof}

\begin{corollary}Let $\Omega=\phi+i\psi$ be the complex potential of  $f=u+iv$, 
defined in a simply connected domain $D$. 
Then $\psi$ is a first integral of $z'=f$ and $\phi$ is a first integral of $z'=if.$
\end{corollary}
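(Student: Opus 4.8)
The plan is to reduce the corollary to the immediately preceding proposition, which already identifies $\psi$ and $\phi$ as first integrals of the complex-time systems $\eqref{tempocomplexo}$-$\Re$ and $\eqref{tempocomplexo}$-$\Im$ respectively. The key observation is that the system $z'=f$ is precisely the real-time flow, i.e.\ system $\eqref{tempocomplexo}$-$\Re$, since setting $T=t\in\R$ (so $s=0$) in $z'(T)=f(z(T))$ recovers $x_t=u,\ y_t=v$. Thus the first half of the statement is an immediate specialization: $\psi$ is a first integral of $z'=f$ because it is a first integral of the $\Re$-system by the proposition.

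For the second half, I would identify $z'=if$ with the imaginary-time system $\eqref{tempocomplexo}$-$\Im$. Writing $if=i(u+iv)=-v+iu$, the real system associated with $z'=if$ is $x_t=-v,\ y_t=u$, which is exactly the system $\eqref{tempocomplexo}$-$\Im$ (with the role of $s$ played by the new real time). Since the proposition guarantees that $\phi$ is a first integral of $\eqref{tempocomplexo}$-$\Im$, it follows that $\phi$ is a first integral of $z'=if$. Alternatively, one could argue geometrically: multiplication by $i$ rotates the field $f$ by ninety degrees, and since the level curves of $\phi$ and $\psi$ form orthogonal families (by the proposition on transversal intersection), the orbits of $z'=if$ are the level curves of $\phi$.

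I do not anticipate a genuine obstacle here, since both claims are corollaries of results already established. The only point requiring care is the bookkeeping of which real planar system corresponds to $z'=f$ versus $z'=if$, and matching these correctly against the two cases in $\eqref{tempocomplexo}$; a sign error in expanding $if=-v+iu$ would swap the conclusions. I would therefore make the identifications $z'=f \leftrightarrow \eqref{tempocomplexo}$-$\Re$ and $z'=if \leftrightarrow \eqref{tempocomplexo}$-$\Im$ explicit before invoking the proposition, and keep the proof to two or three lines.
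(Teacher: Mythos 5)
Your proposal is correct, and it differs from the paper's own proof only in mechanism, not in substance. You argue by pure identification: $z'=f$ in real time is literally system \eqref{tempocomplexo}-$\Re$, and $z'=if$ (with $if=-v+iu$, which you expand with the correct signs) is literally system \eqref{tempocomplexo}-$\Im$, so the corollary is the proposition restated. The paper instead quotes the identities obtained inside the proposition's proof,
\[
f=(u,v)=(u^2+v^2)(\psi_y,-\psi_x), \qquad if=(-v,u)=(u^2+v^2)(-\phi_y,\phi_x),
\]
and reads off the conclusion directly: $f$ and $if$ are positive multiples of the Hamiltonian vector fields of $\psi$ and $\phi$, so $\nabla\psi\cdot f=0$ and $\nabla\phi\cdot(-v,u)=0$. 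What your route buys is a computation-free logical reduction that makes the dependence on the proposition explicit; what the paper's route buys is a self-contained verification that also exhibits the Hamiltonian structure of both flows, which is the geometric content behind the statement. Your alternative geometric argument (rotation by $90$ degrees together with orthogonality of the two families of level curves) is essentially the proof the paper gives for the parallel corollary in Section \ref{sec:potencial}, so it too is consistent with the paper. There is no gap in your proposal; the only point of care, which you correctly flagged and handled, is the sign bookkeeping in $if=-v+iu$, since an error there would swap the roles of $\phi$ and $\psi$.
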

\begin{proof}This follows directly from the fact that 
\[f=(u,v)=(u^2+v^2)(\psi_y,-\psi_x)\] and \[if=(-v,u)=(u^2+v^2)(-\phi_y,\phi_x).\]
\end{proof}

An alternative proof of the above proposition can be done using the variable separation method. 
\[\dfrac{dz}{dT}=f\implies \int\dfrac{dz}{f}=\int dT\implies \phi+i\psi=t+si+a+bi,\]
where $a,b$ are real constants. Deriving the last relation with respect to $t$ we get
\[(\phi_x,\phi_y)(x_t,y_t)+i(\psi_x,\psi_y)(x_t,y_t)=1\]
and thus 
\[(\psi_x,\psi_y)(x_t,y_t)=0.\] It means that $\psi(x,y)$ is a first integral of system \eqref{tempocomplexo}-$\Re$.
Similarly, differentiating both sides with respect to $s$ we conclude that $\phi(x,y)$ is a first integral of \eqref{tempocomplexo}-$\Im$.\\

\begin{example} \normalfont{Let us consider the differential equation $z'=1$ satisfying the initial condition $z(0)=x_0+iy_0=z_0.$ We have $u=1,v=0$ and  $\Re:  x_t=1, \quad y_t=0;$  
 $\Im:  x_s=0,\quad y_s=1.$ Thus
$x=t+c(s),$ $y=d(s)$ with $x(0)=x_0=c(0),$ $y(0)=y_0=d(0).$
We have $d'=y_s=1,$ $c'=x_s=-y_t=0,$
and then $c=x_0,$ $d=s+y_0.$
We conclude that $x=x_0+t,y=y_0+s,$ 
\[z=(x_0+t)+i(y_0+s)=(t+si)+(x_0+iy_0)=T+z_0.\] The phase portrait of this constant flow is shown in Figure~\ref{ffig1}(a).}
\end{example}
Of course, if we use the variable separation method we found the same result
\[\dfrac{dz}{dT}=1\implies z=T+K,\quad z_0=z(0)=K\implies z=T+z_0.\]

\begin{example} \normalfont{Let us consider the differential equation $z'=z$ satisfying the initial condition 
$z(0)=x_0+iy_0=z_0.$ We have
$u=x,v=y$ and $\Re:  x_t=x , \quad y_t=y;$ 
 $\Im: x_s=-y,\quad  y_s=x.$
 Thus $x=c(s)e^t,$  $y=d(s)e^t$
with $x(0)=x_0=c(0), y(0,0)=y_0=d(0).$ Proceeding as in the previous example, we easily obtain
$z=z_0e^T.$  Of course, if we use the variable separation method we found the same result
\[\dfrac{dz}{dT}=z\implies z=Ke^T,\quad z_0=z(0)=K\implies z=z_0e^T.\] The phase portrait of this linear flow is shown in Figure~\ref{ffig1}(b).}
\end{example}

\begin{example}  \normalfont{Let us consider the differential equation $z'=z^2$ satisfying the initial condition $z(0)=x_0+iy_0=z_0.$
We have $u=x^2-y^2$, $v=2xy$ and 
$\Re:x_t=x^2-y^2, \quad y_t=2xy;$ 
 $\Im:  x_s=-2xy,\quad y_s=x^2-y^2 .$
 In this case, using separation of variables we obtain
$z=\frac{-1}{T-\frac{1}{z_0}}.$
The phase portrait of this quadratic flow, showing both real time trajectories (tangent to the $x$-axis) and pure imaginary time trajectories (tangent to the $y$-axis), is displayed in Figure~\ref{ffig1}(c).}
 \end{example}

\begin{example}  \normalfont{Let us consider the differential equation $z'=\frac{1}{z}$ satisfying the initial condition $z(0)=x_0+iy_0=z_0.$ In this case, using separation of variables we obtain
$\frac{z(T)^2}{2}=T+\frac{z_0^2}{2}.$ The phase portrait of this reciprocal flow is shown in Figure~\ref{ffig1}(d).}
 \end{example}
 These four basic examples illustrate the geometric structure of holomorphic flows with complex time. For more complex nonlinear cases, such as $z'=\frac{z^2}{1+z}$, the relationship between real and imaginary time flows follows the same principles but with richer geometric structure, as shown in Figure~\ref{ffig2}.
 
\section{Monic and Centered Cubic Polynomial Differential Equation}\label{sec:cubic}

We begin this section by recalling that a polynomial \( p(z) = A_n z^n + \dots + A_0 \), $A_k=a_k+ib_k$,  is said to be 
\emph{monic} and \emph{centered} if \( A_n = 1 \) and \( A_{n-1} = 0 \).

Any polynomial of degree greater than or equal to two can be transformed into a monic centered polynomial 
via an affine change of variables \( z \rightarrow \alpha z + \beta \) with \( \alpha \neq 0 \). 
This procedure reduces the number of parameters and simplifies the classification. 
The vanishing of the coefficient \( A_{n-1} \) means that the sum of the zeros of the 
polynomial is zero, according to Viète's Theorem. Geometrically, this implies that the 
barycenter of the roots is located at the origin.

The set of monic centered polynomials is closed under rotations \( z \rightarrow e^{i\theta} z \), 
which preserve the leading term and keep \( A_{n-1} = 0 \). 
Moreover, we can normalize the poles at infinity, and in doing so, we index the separatrices. 
The coefficient $A_n$ determines the rate at which the trajectories approach infinity.  
On the other hand, when the term $A_{n-1}$ vanishes, it ensures that we can index the separatrices without ambiguity.

We will classify the possible phase portraits of monic and centered polynomial systems of degree 3.  

The phase portrait of a monic cubic polynomial system on the Poincaré disk 
is characterized by a finite number of canonical regions. 
There are three types of such regions:
\begin{enumerate}
    \item \textbf{Center-type region:} 
    the phase portrait in the finite part of the plane is organized around a single center.
    
    \item \textbf{Sepal-type region:} 
    this region contains a unique finite equilibrium point, and every finite orbit in the region 
    has both its $\alpha$- and $\omega$-limit sets equal to that point.
    
    \item \textbf{$\alpha$--$\omega$ region:} 
    this region contains two equilibrium points, and every finite orbit 
    has its $\alpha$-limit at one of them and its $\omega$-limit at the other.
\end{enumerate}

\begin{theorem}\label{teosepala}
Given a monic and centered cubic polynomial system of degree $3$, 
the global phase portrait on the Poincaré disk can be decomposed 
into $2$, $3$, or $4$ canonical regions. 
Moreover, it holds that there can only exist 
$1$ or $3$ center-type regions, 
$1$ or $2$ $\alpha$--$\omega$ --type regions, 
and $2$ or $4$ sepal-type regions. The possible configurations are as follows:
\begin{enumerate}
    \item three center-type regions;
    \item one center-type region and one $\alpha$--$\omega$ region;
    \item one center-type region and two sepal-type region;
    \item four sepal-type regions;
    \item two sepal-type regions and one $\alpha$--$\omega$ region;
    \item two $\alpha$--$\omega$ regions.
\end{enumerate}
\end{theorem}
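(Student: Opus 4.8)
The plan is to trivialise the flow through the complex potential, classify the finite and infinite singularities, and then organise the global picture as a translation structure on the Poincar\'e disk; the six configurations should emerge from a short trichotomy combined with one arithmetic constraint on the residues.

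First I would fix the data. After the monic centered reduction the field is $\dot z=p(z)$ with $p(z)=z^{3}+cz+d$, and I would split the analysis according to the discriminant of $p$ into three cases: three simple zeros, one double and one simple zero, and the triple zero $\dot z=z^{3}$. The finite singularities are the zeros $z_{i}$ of $p$, with residues $r_{i}=\operatorname{Res}(1/p,z_{i})=1/p'(z_{i})$; since $1/p\sim z^{-3}$ at infinity one has the key relation $r_{1}+r_{2}+r_{3}=0$. Using the normal forms recalled in the introduction, a simple zero is a center exactly when $r_{i}\in i\R$ and a focus or (star) node otherwise, whereas a zero of multiplicity $m$ carries $2(m-1)$ elliptic (petal) sectors, i.e. $2(m-1)$ sepal regions. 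At infinity, where $p(z)\approx z^{3}$, the substitution $w=1/z$ produces an isolated singularity; a polar computation shows it has $2(n-1)=4$ separatrices running radially to and from infinity (here $n=\deg p=3$), forming four hyperbolic sectors of index $-1$, consistent with Poincar\'e--Hopf: $\deg p+(\text{index at }\infty)=3-1=2=\chi(S^{2})$.

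The mechanism that turns this local data into a global portrait is the rectifying coordinate. By Proposition~\ref{star} the potential $\Omega(z)=\int \tfrac{1}{p(z)}\,dz=\sum_i r_i\log(z-z_i)$ exists on cut domains, and in $w=\Omega(z)$ the system becomes $\dot w=1$, so the orbits are the horizontal lines $\psi=\Im\Omega=\text{const}$ of the stream-function description following Theorem~\ref{teofn}. The holonomy of $\Omega$ around $z_i$ is $2\pi i\,r_i$, which is real precisely when $r_i\in i\R$; hence centers produce closed (cylinder) orbits, foci produce vertically drifting (spiralling) orbits, and multiple zeros produce the petals. In this language the three canonical region types are the cylinders (center-type), the bi-infinite horizontal strips joining two equilibria (the $\alpha$--$\omega$ regions), and the half-planes carrying the petals (sepal-type).

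The enumeration then rests on two clean observations. Because multiplicity-$m$ zeros contribute $2(m-1)$ petals and simple zeros contribute none, the number of sepal regions is $0$, $2$, or $4$ (even), according to whether $p$ has only simple zeros, a double zero, or a triple zero. Because $r_{1}+r_{2}+r_{3}=0$, if two residues lie in $i\R$ then so does the third; thus among simple zeros the number of centers is never exactly two, so it is $0$, $1$, or $3$. Combining the discriminant trichotomy with the center count yields exactly the six cases: three simple zeros give three, one, or no centers (configurations (1), (2), (6)); a double plus a simple zero gives two petals together with either a center or a focus (configurations (3), (5)); and the triple zero gives four petals (configuration (4)), while the $\alpha$--$\omega$ count $0,1,2$ is forced by the four infinite separatrices pairing into at most two strips. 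I expect the main obstacle to be precisely this last point: while multiplicities and residue arguments pin down the local types, the actual separatrix connections---hence the exact number of strips and the verification that each case realises the claimed global portrait rather than a different tiling---are a global datum depending on the relative values of $\Im\Omega$ at the singularities. Establishing that they connect in the asserted way, and ruling out the non-admissible tilings, will require the translation-surface analysis of the rectifying map together with a continuity argument tracking how separatrix connections break and reform as $(c,d)$ vary, rather than any purely local computation.
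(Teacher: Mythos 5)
Your proposal follows essentially the same route as the paper's own proof: your residue identity $r_1+r_2+r_3=0$ is exactly the Euler--Jacobi formula the authors invoke to rule out "exactly two centers," the four saddles at infinity come from the same Poincar\'e compactification computation, and your rectifying coordinate $w=\Omega(z)$ is precisely the paper's change of variables $w=\Phi(z)$ with $\Phi$ a primitive of $1/p$. The differences are organizational rather than conceptual: you run a discriminant trichotomy crossed with the center count, while the paper enumerates ten equilibrium configurations (distinguishing nodes from foci, which your coarser count correctly lumps together) and, for each one, exhibits an explicit example system together with its rectified portrait. Where you honestly defer the hardest step --- proving that the separatrices actually connect so as to produce the asserted tilings, and that each listed configuration is realized --- the paper settles it by this case-by-case exhibition: realizability comes from concrete examples such as $\dot z = z^3-iz$, $\dot z = z^3-z$, $\dot z = z^3$, $\dot z = z^3-3iz+\sqrt{2}(-1+i)$, and the region counts are read off from the rectified pictures rather than derived from a continuity or translation-surface argument of the kind you propose. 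So your plan is sound and matches the paper's machinery; what it lacks relative to the paper is the per-case instantiation that the authors use in place of the global connection argument you flag as missing --- and it is worth noting that the paper itself does not supply that global argument either, so your sketch of how to fill it is, if anything, more demanding than what the published proof actually does.
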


\subsection{Proof of Theorem \ref{teosepala}}

The first step will be to determine the dynamics at infinity.
Consider $\dot{z} =z^3+A_ 1z+A_0=u + i\,v$, where 
\[
u = p_3+ \big(a_1p_1-b_1q_1\big)+a_0,
\qquad
v = q_3+\big(b_1 p_1 + a_1 q_1\big) + b_0,
\] and $p_k=\Re (x+iy)^k$ and $q_k=\Im (x+iy)^k$.

The resulting planar system is given by
\[
\dot{x} = x^{3} - 3x y^{2} + \ldots,
\qquad
\dot{y} = 3x^{2} y - y^{3} + \ldots
\]

To study the equilibrium points at infinity, we use the Poincaré compactification.  
The expression of the Poincaré compactification in the chart $U_1$ is given by
\[
\dot{s} = w^{3}\left( -s\,u\left(\frac{1}{w},\frac{s}{w}\right) + v\left(\frac{s}{w}\right) \right),
\qquad
\dot{w} = -w^{4} u\left(\frac{1}{w},\frac{s}{w}\right).
\]
The expression in $U_2$ is given by
\[
\dot{s} = w^{3}\left( u\left(\frac{s}{w},\frac{1}{w}\right) - s\,v\left(\frac{s}{w},\frac{1}{w}\right) \right),
\qquad
\dot{w} = -w^{4} v\left(\frac{s}{w},\frac{1}{w}\right).
\]
Note that, in these charts, a point $(s,w)$ at infinity has coordinates $(s,0)$.  
For the chart $U_1$, we must therefore study the system
\[
\dot{s} = 2s(s^{2} + 1), \qquad \dot{w} = 0.
\]
Note that $s = 0$ is a saddle of the system above.  
For the other charts, following the same steps, we also find a saddle point.  
We denote the four points at infinity by $e_{0}$, $e_{1}$, $e_{2}$, and $e_{3}$.

The next step in obtaining the phase portrait is to classify the equilibrium points.  
According to the Euler–Jacobi formula, if $z_{1}$, $z_{2}$, and $z_{3}$ are simple zeros of $p(z)$, we have that
\[\frac{1}{p'(z_1)}+\frac{1}{p'(z_2)}+\frac{1}{p'(z_3)}=0.\]
As a consequence of this formula, if two equilibria of $z' = p(z)$ are centers, then the third one must also be a center. 
Similarly, if two equilibria are of node type, then the third one will also be a node.  
Another consequence is that if one of the equilibria is not of center type, then there must 
necessarily exist two equilibria with independent stabilities. In summary, the possible configurations for 
the equilibrium points of a cubic polynomial system are the following: (a) 3 centers, (b) 3 nodes, (c) 1 triple, (d) 3 foci,
(e) 1 center and 1 double, (f) 1 node and 1 double, (g) 1 focus and 1 double, (h) 1 center and 2 foci, 
(i) 1 node and 2 foci and (j) 1 center, 1 node and 1 focus.

In what follows we denote by $S_{ij}$ the separatrix connecting the equilibria $e_{i}$ and $e_{j}$.  
Moreover, in order  to visualize the phase portraits we use rectifying coordinates.  
More precisely, if $\Phi(z)$ is a primitive of $\frac{1}{p(z)}$, then the change of coordinates $w = \Phi(z)$ satisfies  
$\dot{w} = \Phi'(z) \cdot \dot{z} = \frac{1}{p(z)} \cdot p(z) = 1$.

In the figures below, the left panel represents the phase portrait on the Poincaré disk, while the right panel shows its rectification via the change of coordinates $w=\Phi(z)$, for which the flow becomes horizontal straight lines. Finite orbits correspond to horizontal lines in the $w$--plane, whose ends represent their $\alpha$ and $\omega$--limits. In particular, an orbit may come from infinity in negative time and return to infinity in positive time through a separatrix $S_{ij}$, or connect finite equilibria depending on the configuration.

\subsubsection{(a) 3 centers.} The phase portrait, in the Poincaré disk, is composed of three regions.  
One of the regions is bounded by the separatrix $S_{10}$ and the arc $(e_{0},e_{1}) \subset S^{1}$,  
another is bounded by the two separatrices $S_{10}$ and $S_{32}$,  
and the third is bounded by the separatrix $S_{32}$ and the arc $(e_{2},e_{3}) \subset S^{1}$. 
An example of a polynomial system that exhibits this configuration is  $\dot{z} = z^{3} - i z.$ In this case we have three center-type regions. The phase portrait is sketched in Figure~\ref{ret1}.
\begin{figure}[H]
	\includegraphics[width=8cm, height=5cm]{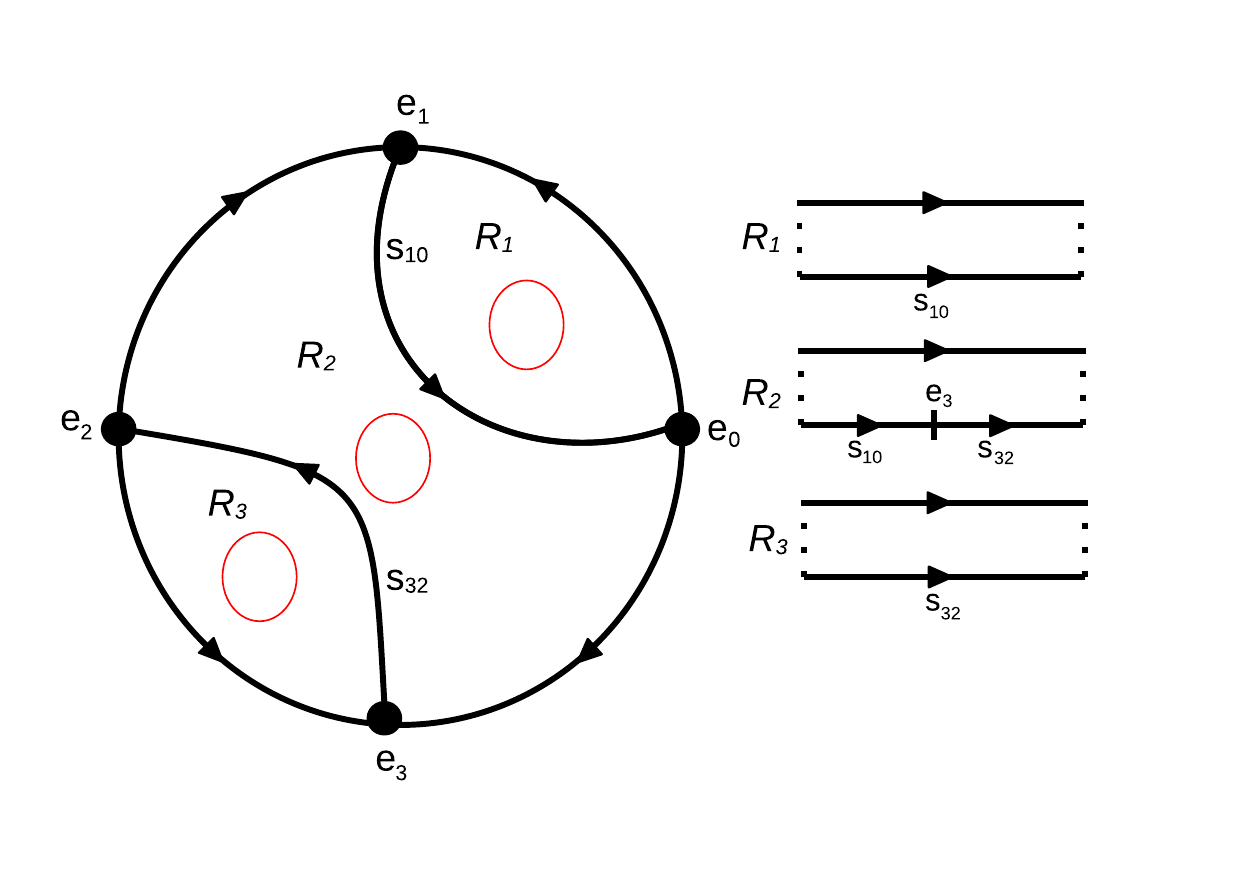} 
	\caption{\small {On the left, the phase portrait in the Poincaré disk of $\dot{z} = z^3 - iz$; 
	on the right, the rectification of the phase portrait via the coordinate change $w = \Phi(z)$.}}\label{ret1}
\end{figure}

\subsubsection{(b) 3 nodes.} The phase portrait, in the Poincaré disk, is composed of two regions.   
One of the regions is bounded by the separatrices $ S_1$ and $S_3$ and by the arcs $(e_0,e_1)$ and $(e_0,e_3)$;  
another is the complementar one. 
An example of a polynomial system that exhibits this configuration is  $\dot{z} = z^{3} - z.$
In this case we have two $\alpha$--$\omega$ regions.
The phase portrait is sketched in Figure~\ref{ret2}.
\begin{figure}[H]
	\includegraphics[width=8cm, height=5cm]{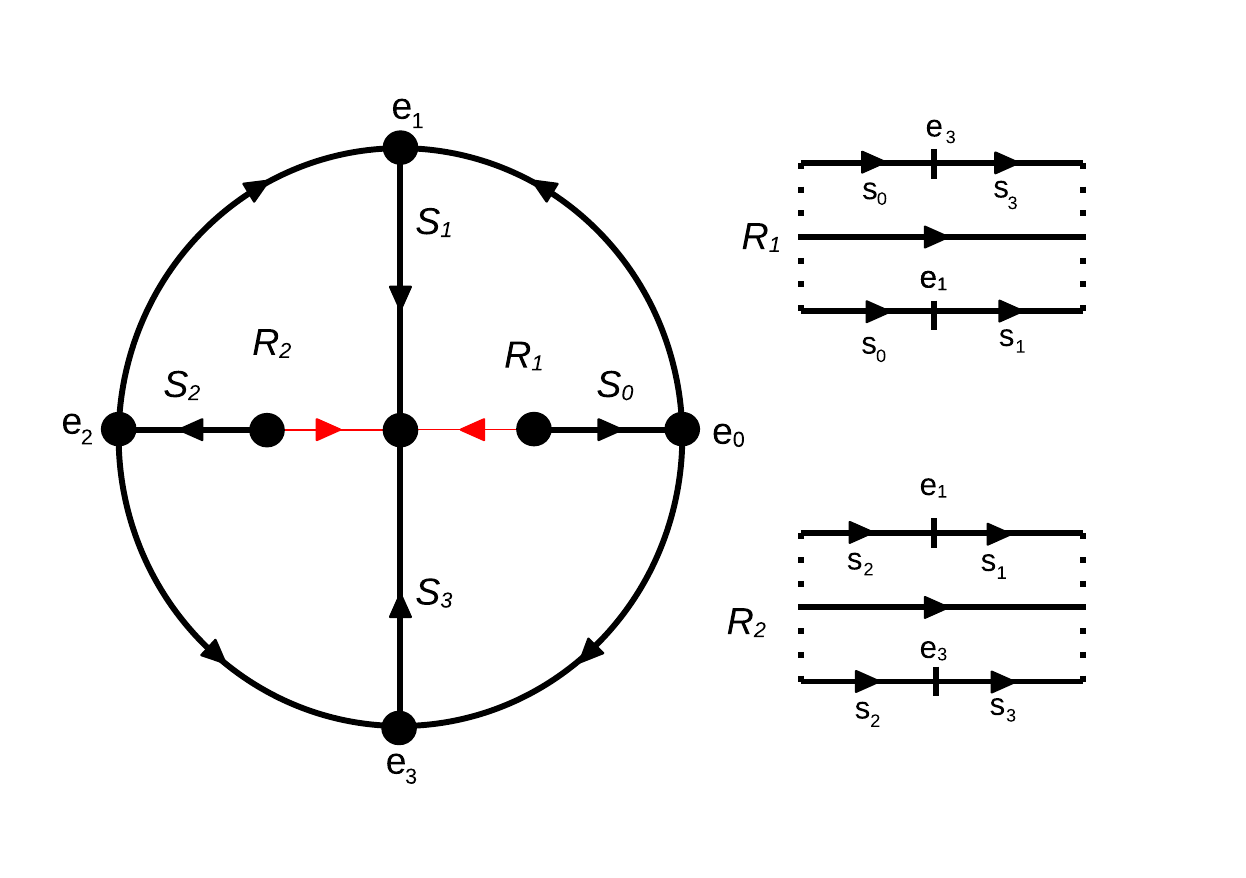} 
	\caption{\small{On the left, the phase portrait in the Poincaré disk of $\dot{z} = z^3 - z$; 
	on the right, the rectification of the phase portrait via the coordinate change $w = \Phi(z)$.}}\label{ret2}
\end{figure}

\subsubsection{(c) 1  triple.} The phase portrait, in the Poincaré disk, is composed of four regions (each quadrant is a region).  
An example of a polynomial system that exhibits this configuration is  $\dot{z} = z^{3}.$
In this case we have four sepal-type regions.
The phase portrait is sketched in Figure~\ref{ret3}.
\begin{figure}[H]
	\includegraphics[width=8cm, height=5cm]{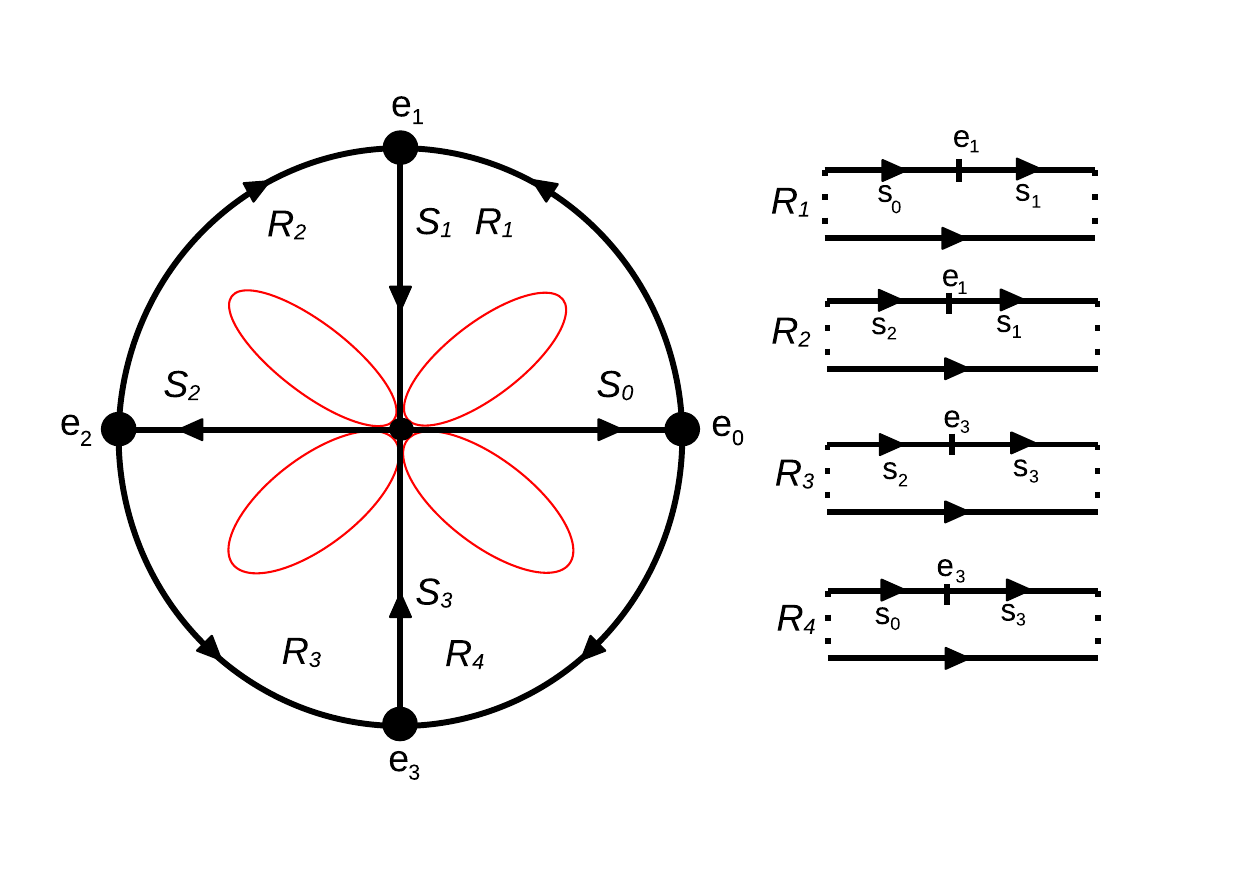} 
	\caption{\small{On the left, the phase portrait in the Poincaré disk of $\dot{z} = z^3$; 
	on the right, the rectification of the phase portrait via the coordinate change $w = \Phi(z)$.}}\label{ret3}
\end{figure}

\subsubsection{(d) 1 attracting focus, and 2 repelling foci.} The phase portrait, in the Poincaré disk, is composed of two regions.
The regions share as a common boundary the union of the separatrices $S_1$ and $S_3$. 
An example of a polynomial system that exhibits this configuration is  $\dot{z} = z^3+(4+6i)z+(4-12i)$. 
In this case we have two $\alpha$--$\omega$ regions.
The phase portrait is sketched in Figure~\ref{ret5}.
\begin{figure}[H]
	\includegraphics[width=8cm, height=5cm]{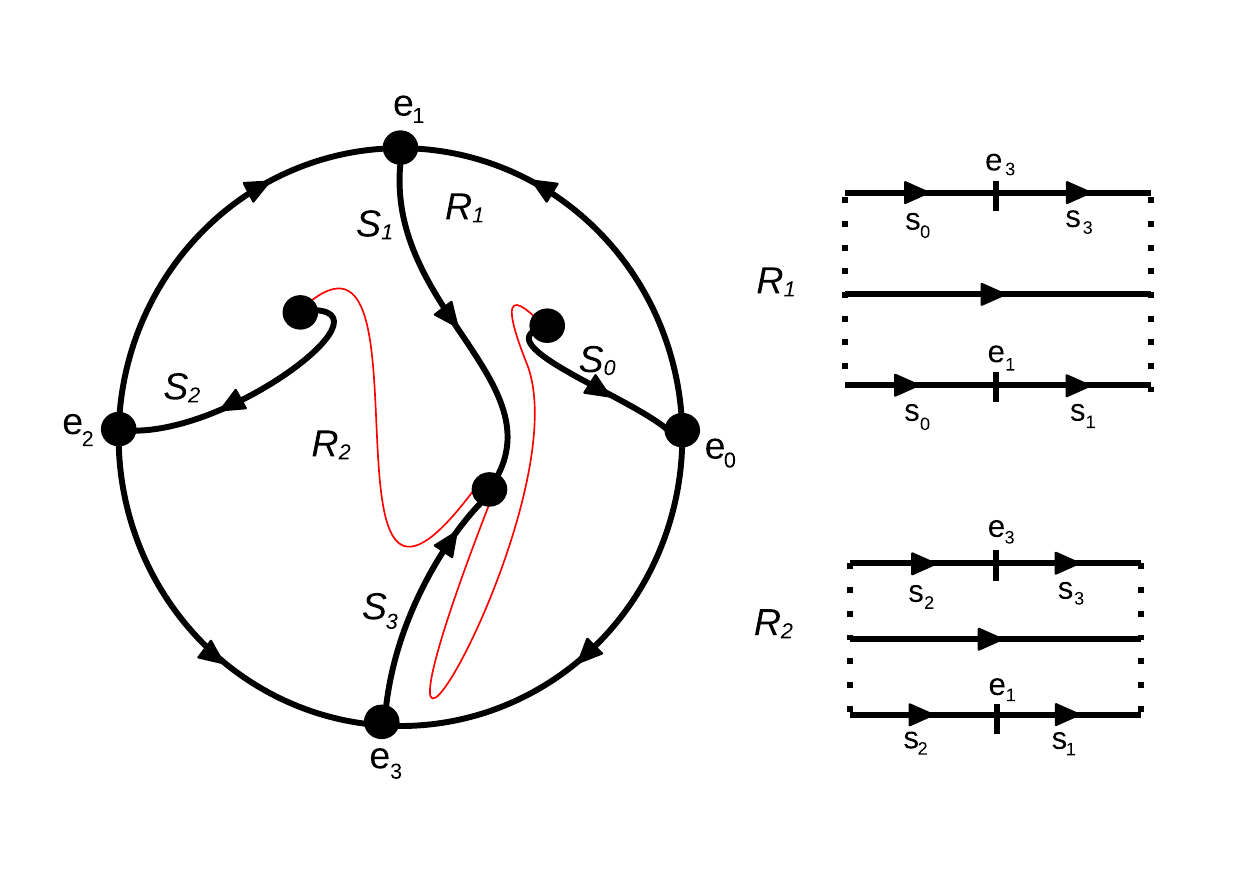} 
	\caption{\small{On the left, the phase portrait in the Poincaré disk of $\dot{z} = z^3+(4+6i)z+(4-12i)$; 
	on the right, the rectification of the phase portrait via the coordinate change $w = \Phi(z)$.}}\label{ret5}
\end{figure}

\subsubsection{(e) 1 center and 1 double.} The phase portrait, in the Poincaré disk, is composed of three regions.
One region is  bounded by the separatrices $S_{1},S_0$ and by the arc $(e_0,e_1)$, another is  bounded by 
the separatrix $S_{23}$ and by the arc $(e_2,e_3)$ and the third is the complementar one.
An example of a polynomial system that exhibits this configuration is  $\dot{z} = z^3-3iz+\sqrt{2}(-1+i)$. 
In this case we have one center-type region and two sepal-type region.
The phase portrait is sketched in Figure~\ref{ret6}.
\begin{figure}[H]
	\includegraphics[width=8cm, height=5cm]{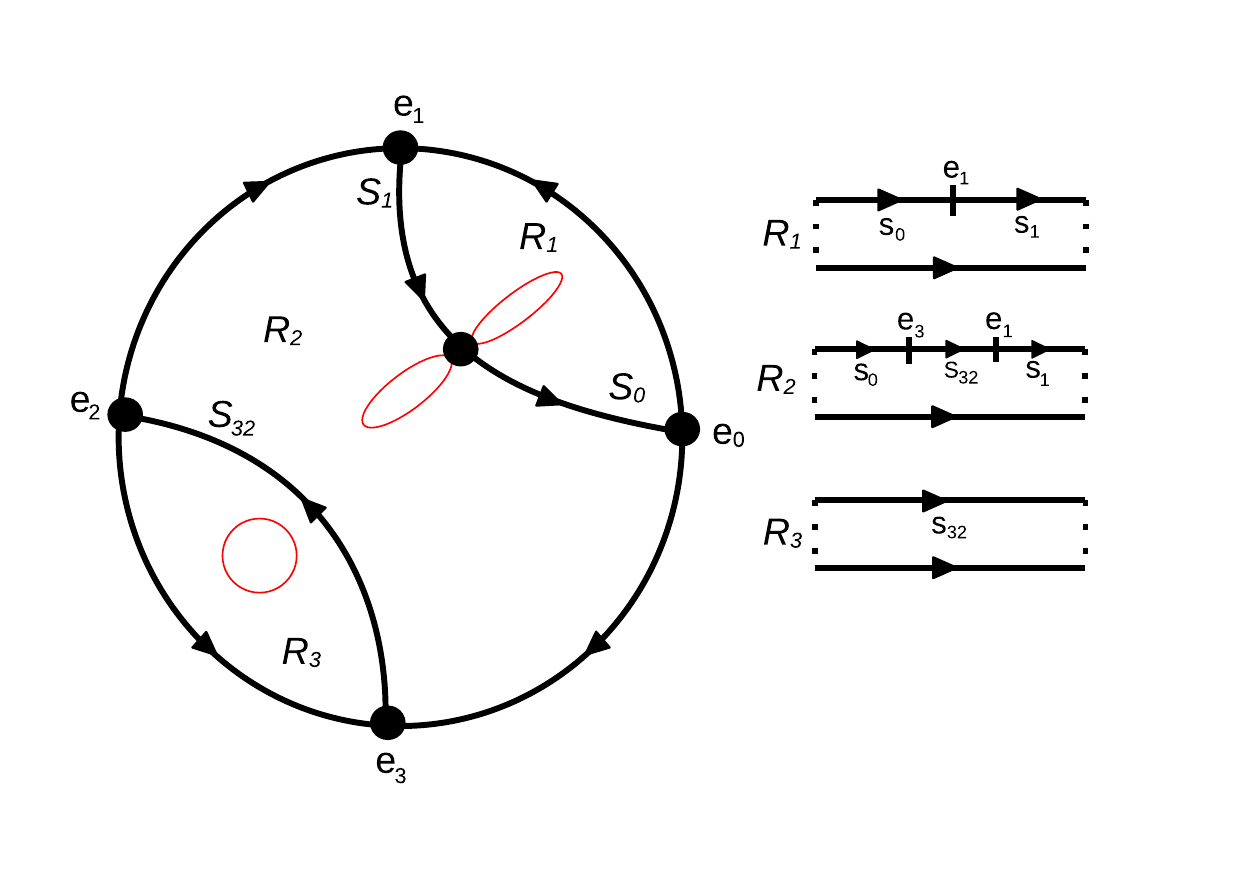} 
	\caption{\small{On the left, the phase portrait in the Poincaré disk of $\dot{z} = z^3-3iz+\sqrt{2}(-1+i)$; 
	on the right, the rectification of the phase portrait via the coordinate change $w = \Phi(z)$.}}\label{ret6}
\end{figure}

\subsubsection{(f) 1 repelling node and 1 double.} The phase portrait, in the Poincaré disk, is composed of three regions.
One region is  bounded by the separatrices $S_{1}$, $S_0$ and by the arc $(e_0,e_1)$, another is  bounded by 
the separatrices $S_{3}$, $S_0$ and by the arc and by the arc $(e_0,e_3)$ and the third is the complementar one.
An example of a polynomial system that exhibits this configuration is  $\dot{z} = z^3-3z+2$. 
In this case we have two sepal-type regions and one $\alpha$--$\omega$ region.
The phase portrait is sketched in Figure~\ref{ret7}.
\begin{figure}[H]
	\includegraphics[width=8cm, height=5cm]{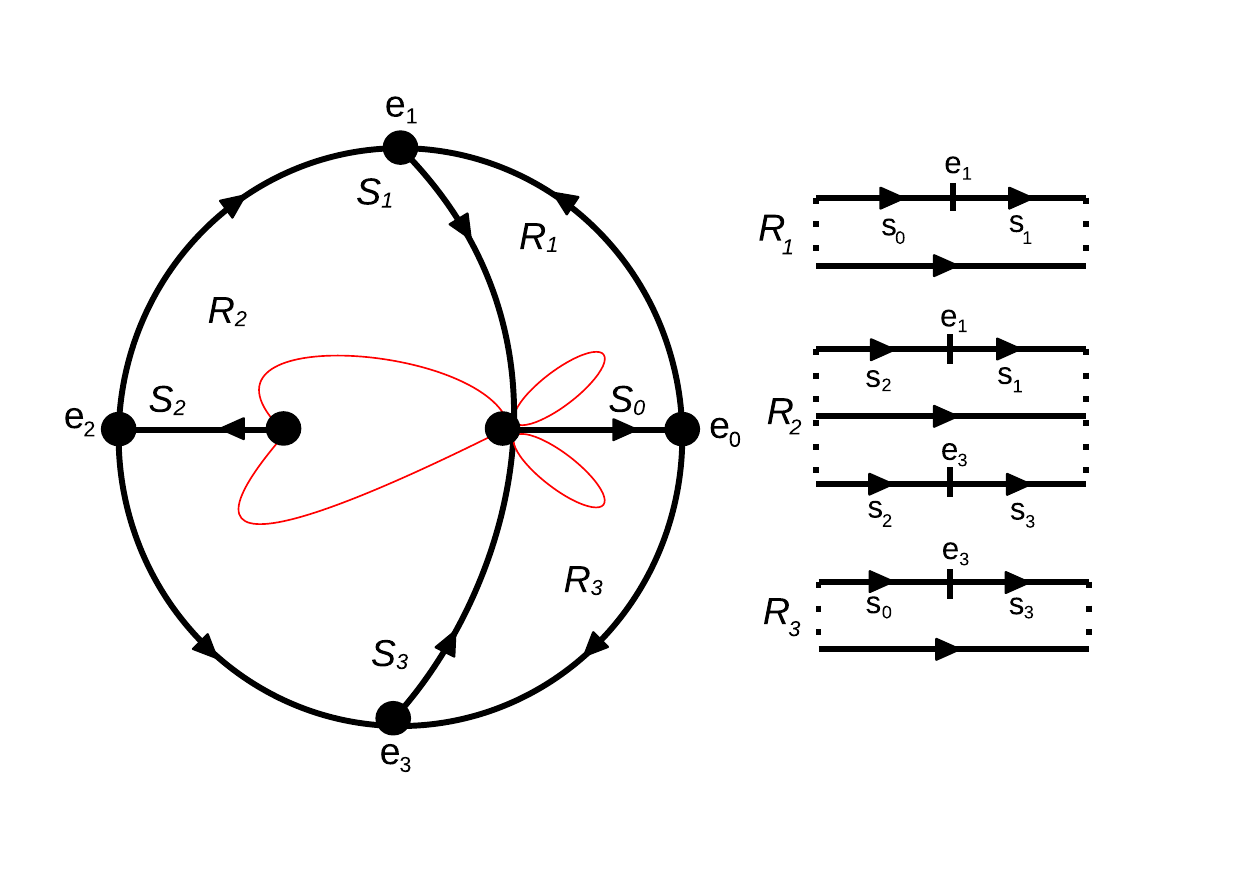} 
	\caption{\small{On the left, the phase portrait in the Poincaré disk of $\dot{z} = z^3-3z+2$; 
	on the right, the rectification of the phase portrait via the coordinate change $w = \Phi(z)$.}}\label{ret7}
\end{figure}

\subsubsection{(g) 1 attracting focus and 1 double.} The phase portrait, in the Poincaré disk, is composed of three regions.
One region is  bounded by the separatrices $S_{1}$, $S_0$ and by the arc $(e_0,e_1)$,  another is  bounded by 
separatrices $S_{1}$, $S_2$ and by the arc $(e_2,e_1)$  and the third is the complementar one.
An example of a polynomial system that exhibits this configuration is  $\dot{z} = z^3+(9-12i)z-22-4i$. 
In this case we have two sepal-type regions and one $\alpha$--$\omega$ region.
The phase portrait is sketched in Figure~\ref{ret8}.
\begin{figure}[H]
	\includegraphics[width=8cm, height=5cm]{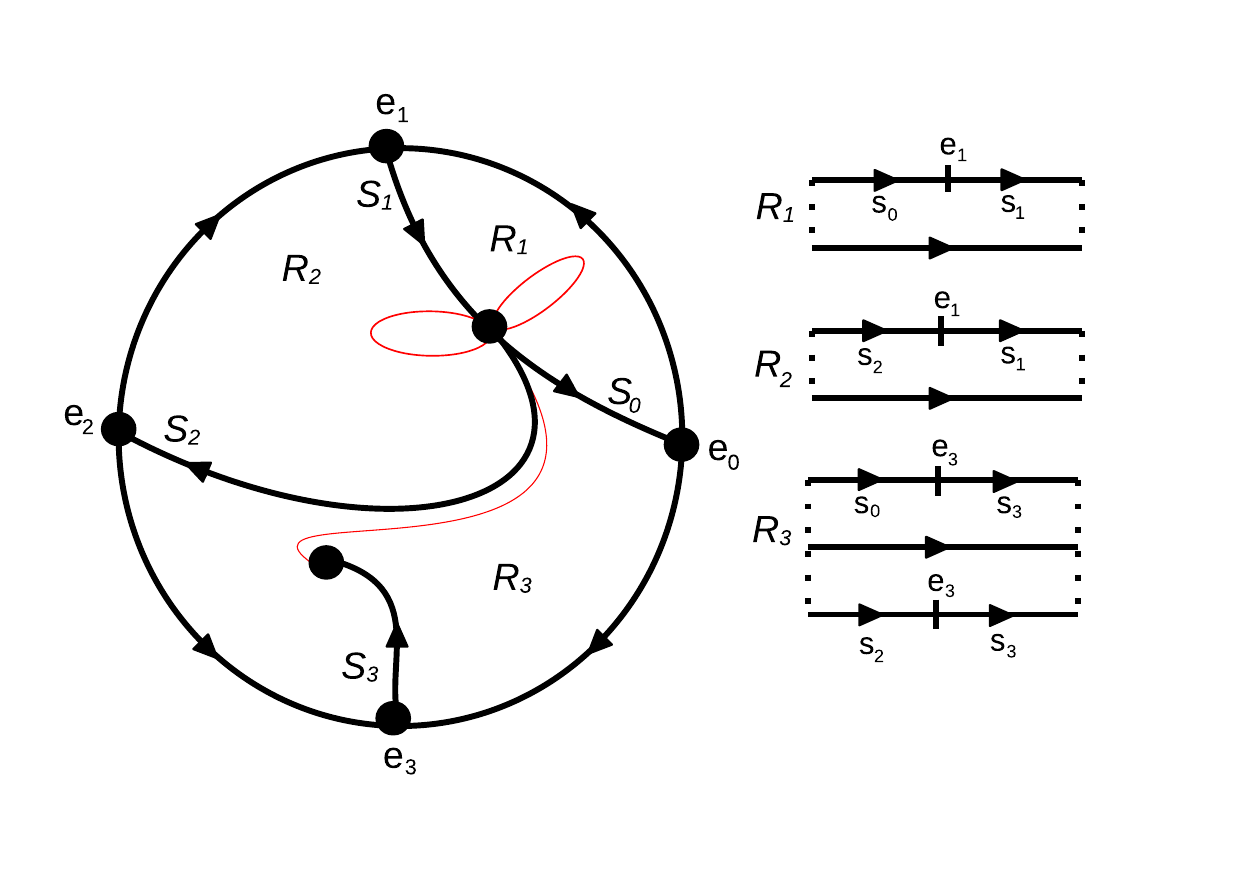} 
	\caption{\small{On the left, the phase portrait in the Poincaré disk of $\dot{z} = z^3+(9-12i)z-22-4i$; 
	on the right, the rectification of the phase portrait via the coordinate change $w = \Phi(z)$.}}\label{ret8}
\end{figure}

\subsubsection{(h) 1 center, 1 attracting focus, and 1 repelling focus.} The phase portrait, in the Poincaré disk, is composed of two regions.  
One of the regions is bounded by the separatrix $S_{10}$ and by the arc $(e_0,e_1)$. 
The other region is the complement of this one in the disk.
An example of a polynomial system that exhibits this configuration is  $\dot{z} = z^3+3iz+(5-5i)$. 
In this case we have one center-type region and one $\alpha$--$\omega$ region. The phase portrait is sketched in Figure~\ref{ret4}.
\begin{figure}[H]
	\includegraphics[width=8cm, height=5cm]{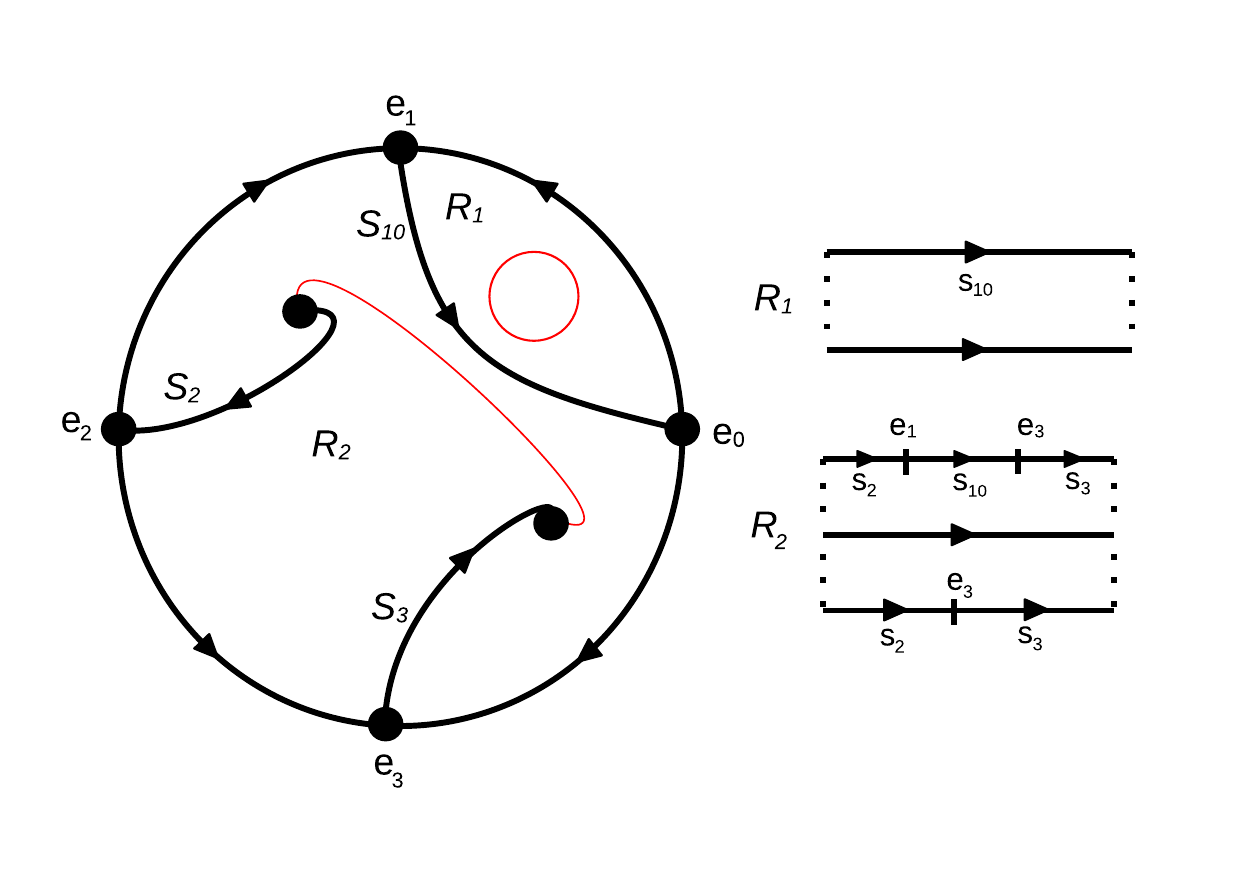} 
	\caption{\small{On the left, the phase portrait in the Poincaré disk of $\dot{z} = z^3+3iz+(5-5i)$; 
	on the right, the rectification of the phase portrait via the coordinate change $w = \Phi(z)$.}}\label{ret4}
\end{figure}

\subsubsection{(i) 2 attracting focus and 1 repelling node.} The phase portrait, in the Poincaré disk, is composed of two regions.
One region is  bounded by the separatrices $S_{2}$, $S_0$ and by the arc $(e_0,e_2)$,  another is  the complementar one.
An example of a polynomial system that exhibits this configuration is  $\dot{z} = z^3+\frac{z}{4}-\frac{5}{4}$. 
In this case we have two $\alpha$--$\omega$ regions.
The phase portrait is sketched in Figure~\ref{ret9}.
\begin{figure}[H]
	\includegraphics[width=8cm, height=5cm]{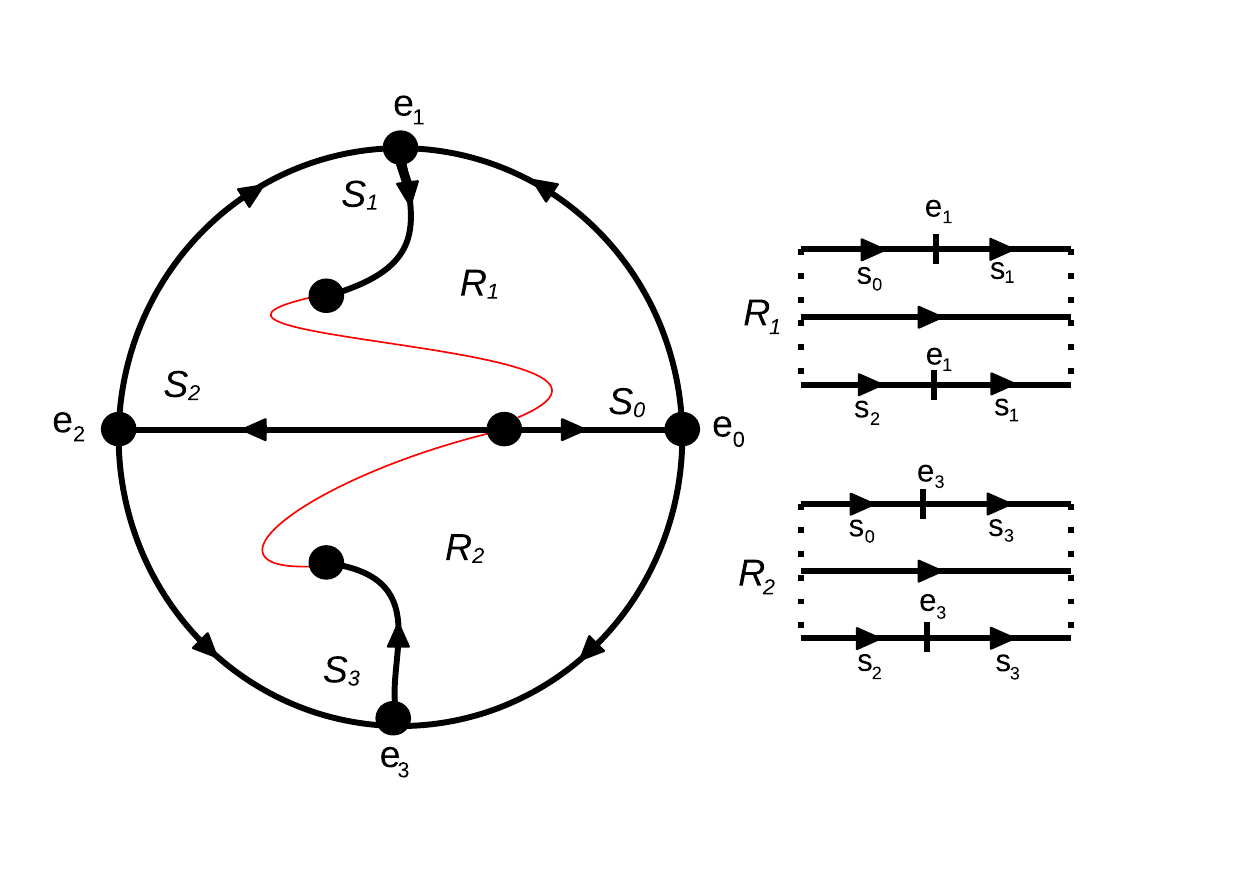} 
	\caption{\small{On the left, the phase portrait in the Poincaré disk of $\dot{z} = z^3+\frac{z}{4}-\frac{5}{4}$; 
	on the right, the rectification of the phase portrait via the coordinate change $w = \Phi(z)$.}}\label{ret9}
\end{figure}

\subsubsection{(j) 1 center, 1 attracting focus and 1 repelling node.} The phase portrait, in the Poincaré disk, is composed of two regions.
One region is  bounded by the separatrix $S_{12}$ and by the arc $(e_2,e_1)$,  another is  the complementar one.
An example of a polynomial system that exhibits this configuration is  $\dot{z} = z^3 + (-343/7500 + 1152/625 i) z + (-1542592/1687500 - 1433619/1687500 i)$. 
In this case we have one center-type region and one $\alpha$--$\omega$ region.
The phase portrait is sketched in Figure~\ref{ret10}.
\begin{figure}[H]
	\includegraphics[width=8cm, height=5cm]{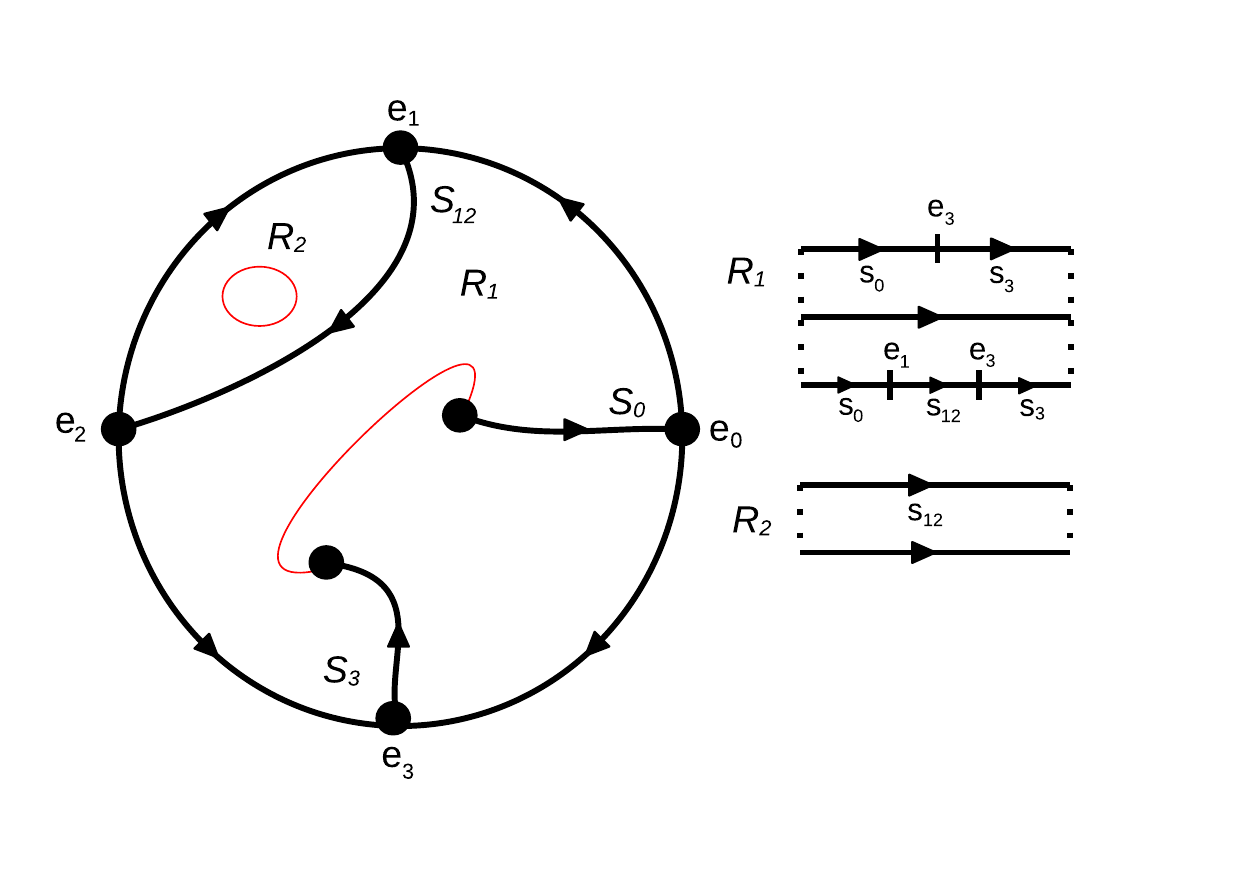} 
	\caption{\small{On the left, the phase portrait in the Poincaré disk of  $\dot{z} = z^3 + (-343/7500 + 1152/625 i) z + (-1542592/1687500 - 1433619/1687500 i)$; 
	on the right, the rectification of the phase portrait via the coordinate change $w = \Phi(z)$.}}\label{ret10}
\end{figure}

\subsection{Bernoulli Equations.}
 Let us consider Bernoulli equations with constant coefficients
 \[z'+\alpha z=\beta z^n,\quad \alpha,\beta\in\C\setminus{0}, \quad n=2,3,....\]
 As before $z=x+iy$, $T=t+is$ and the initial condition is $z(0)=z_0$. We carry out the usual variable change
 $w=z^{1-n}$ and thus the equation obtained is
 \[w'+\alpha(1-n)w=(1-n)\beta, \quad w(0)=w_0=z_0^{1-n}\] whose solution is given by
 \[w=\dfrac{\beta}{\alpha}\left( 1+(w_0-1)e^{-(1-n)\alpha T}\right).\]
 In the initial variable we obtain
 \[z=\dfrac{\beta}{\alpha}\left( 1+(z_0^{1-n}-1)e^{-(1-n)\alpha T}\right)^{\dfrac{1}{1-n}}.\]
 
 \begin{proposition} The real system \eqref{tempocomplexo}-$\Re$ of the Bernoulli equations 
 $z'+\alpha z=\beta z^n,$ with  $n=2,3,....$ has $n$ equilibrium point $0,z_1,...,z_{n-1}$ where
 $z_k^{n-1}=\dfrac{\alpha}{\beta}$. Moreover we have
 \begin{itemize}
 \item If $\Re\alpha>0$ (resp. $<0$) and $\Im\alpha=0$ then $0$ is an attracting (resp. repelling) node and $z_k$, $k=1,...,n-1$ are repelling (resp. attracting) nodes.
 \item If $\Re\alpha>0$ (resp. $<0$) and $\Im\alpha\neq0$ then $0$ is an attracting  (resp. repelling) focus and $z_k$, $k=1,...,n-1$ are repelling (resp. attracting) focus.
 \item If $\Re\alpha=0$ and $\Im\alpha\neq0$ then $0$ is a center and $z_k$, $k=2,...,n-1$ are
 centers too.
 \end{itemize}
 \end{proposition}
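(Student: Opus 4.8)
The plan is to regard the Bernoulli equation $z'+\alpha z=\beta z^{n}$ as the holomorphic system $z'=f(z)$ with $f(z)=\beta z^{n}-\alpha z$, so that \eqref{tempocomplexo}-$\Re$ is precisely the planar field $(\dot x,\dot y)=(\Re f,\Im f)$, and then to classify each equilibrium from its linear part. First I would locate the zeros of $f$ by writing $f(z)=\beta z\bigl(z^{n-1}-\tfrac{\alpha}{\beta}\bigr)$. Since $\alpha,\beta\neq0$, this produces $z=0$ together with the $n-1$ distinct $(n-1)$-th roots $z_{1},\dots,z_{n-1}$ of $\alpha/\beta$, all nonzero; hence there are exactly $n$ equilibria, and each is a \emph{simple} zero of $f$.

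The key computation is the derivative $f'(z)=n\beta z^{n-1}-\alpha$. At the origin $f'(0)=-\alpha$, while at each $z_{k}$, using $z_{k}^{n-1}=\alpha/\beta$, I get $f'(z_{k})=n\beta\cdot\tfrac{\alpha}{\beta}-\alpha=(n-1)\alpha$. For a holomorphic field the real Jacobian at an equilibrium $z_{*}$ is the matrix $\left(\begin{smallmatrix}a&-b\\ b&a\end{smallmatrix}\right)$, where $f'(z_{*})=a+ib$, with eigenvalues $a\pm ib$. Thus the sign of $a=\Re f'(z_{*})$ governs stability and the vanishing of $b=\Im f'(z_{*})$ separates nodes from foci: one reads off a node when $b=0$ (attracting if $a<0$, repelling if $a>0$), a focus when $a\neq0\neq b$, and a center when $a=0$, $b\neq0$.

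Substituting the two values of $f'$ then gives every assertion simultaneously. At the origin $\Re f'(0)=-\Re\alpha$, so $\Re\alpha>0$ yields attraction and $\Re\alpha<0$ repulsion; at each $z_{k}$ one has $\Re f'(z_{k})=(n-1)\Re\alpha$ with $n-1>0$, so the stability is exactly reversed relative to the origin, as stated. Likewise $\Im f'(0)=-\Im\alpha$ and $\Im f'(z_{k})=(n-1)\Im\alpha$ vanish precisely when $\Im\alpha=0$, which distinguishes the node regime from the focus regime in each bullet.

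The only delicate point is the center case $\Re\alpha=0$, $\Im\alpha\neq0$, where the spectrum is purely imaginary and the equilibrium is non-hyperbolic: for a general planar system the nonlinear terms could turn such a linear center into a focus. This does not occur here because the field is holomorphic, so the center–focus problem is absent (see \cite{BT,GGX,GSR1}); equivalently, near the simple zero $z_{*}$ the system is biholomorphically conjugate to $\dot w=(ib)\,w$, whose orbits are concentric circles, and the first integral $\psi=\Im\int\frac{1}{f}\,dz$ supplied by the complex potential has closed level curves around $z_{*}$. Hence every equilibrium with $\Re\alpha=0$ is a genuine center, and the classification is complete. I expect this non-hyperbolic center case to be the main obstacle, since it is the one place where the real linear part alone is not decisive and the holomorphic structure must be invoked.
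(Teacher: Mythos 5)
Your proof is correct and takes essentially the same route as the paper: both locate the equilibria of $f(z)=\beta z^{n}-\alpha z$ as $z=0$ together with the roots of $z^{n-1}=\alpha/\beta$, compute $f'(0)=-\alpha$ and $f'(z_{k})=(n-1)\alpha$, and read the classification off the real and imaginary parts of $\alpha$. In fact your treatment is more careful than the paper's one-line argument, since you explicitly justify the non-hyperbolic center case via the holomorphic structure (absence of the center--focus problem), a point the paper leaves implicit.
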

 \begin{proof} The equilibrium points are the solutions of 
 $f(z)=-\alpha z+\beta z^n=0$ which are $z=0$ and the and the roots of unity $z^{n-1}=\dfrac{\alpha}{\beta}$.
 Since $f'(0)=-\alpha$ and $f'(z_k)=-\alpha+n\beta \dfrac{\alpha}{\beta}=(n-1)\alpha$ the 
 classification of the equilibrium depends only of the real and imaginary parts of $\alpha$.
 \end{proof}


Let us now consider the particular case where $\beta=1$, that is, the Bernoulli system is monic and centered.

\begin{proposition}
Consider the Bernoulli equation
\[
z' = z^n - \alpha z,
\]
where $\alpha = a + bi$ with $a,b \in \mathbb{R}$.  
The global phase portrait of this system on the Poincaré disk is described as follows:
\begin{itemize}
\item[(a)] There exist $n$ finite equilibria, one of which is the origin, while the remaining $n-1$ lie on the circle of radius $|\alpha|^{\frac{1}{n-1}}$.  
If $a > 0$, the origin is an attractor and the other equilibria are repellers.  
If $a < 0$, the origin is a repeller and the others are attractors.  
If $a = 0$, all finite equilibria are centers.

\item[(b)] There exist $2(n-1)$ equilibria at infinity, denoted by $e_0, e_1, \ldots, e_{2n-3}$, whose angular positions are given by
\[
e_k = \frac{k\pi}{\,n-1\,}, \qquad k = 0, \ldots, 2n-3.
\]
All these equilibria are of saddle type.

\item[(c)] If $a \neq 0$, the global phase portrait consists of $n-1$ canonical regions.  
When $a > 0$, each region is of $\alpha\omega$–type, with $\alpha$–limit on one of the equilibria $z_1, \ldots, z_{n-1}$ and $\omega$–limit at the origin.  
When $a < 0$, the roles are reversed: the $\alpha$–limit is the origin and the $\omega$–limit is one of the equilibria $z_1, \ldots, z_{n-1}$.  
The boundary of each region is formed by separatrices connecting finite equilibria to equilibria at infinity.  
If $a = 0$, the global phase portrait consists of $n$ canonical regions, all of center type, bounded by separatrices connecting the equilibria at infinity.
\end{itemize}
\end{proposition}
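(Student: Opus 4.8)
The plan is to handle the three parts in turn, leaning on the previous proposition for the local analysis and on the rectification technique already used for Theorem~\ref{teosepala}. For part (a), the finite equilibria are the zeros of $f(z)=z^n-\alpha z=z(z^{n-1}-\alpha)$: the origin together with the $n-1$ roots of $z^{n-1}=\alpha$, which lie on the circle of radius $|\alpha|^{1/(n-1)}$. Since the field is holomorphic, the type of each equilibrium is governed by the single complex number $f'$, and one computes $f'(0)=-\alpha$ and $f'(z_k)=nz_k^{n-1}-\alpha=(n-1)\alpha$. Writing $\lambda=\mu+i\nu$ for such a derivative, the planar linearization $\dot z=\lambda z$ has eigenvalues $\mu\pm i\nu$, so the real part alone decides attraction ($\mu<0$) versus repulsion ($\mu>0$), with a genuine center when $\mu=0$ (the holomorphic center--focus problem being trivial). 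As $\Re f'(0)=-a$ and $\Re f'(z_k)=(n-1)a$ have opposite signs, this yields exactly the stated dichotomy between the origin and the remaining equilibria.

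For part (b), I would use the Poincaré compactification as in the cubic case, noting that the equilibria at infinity are fixed by the leading homogeneous part $z^n$ alone, since $-\alpha z$ is of lower degree. Writing $z=re^{i\theta}$ gives $\dot r=r^n\cos((n-1)\theta)+O(r)$ and $\dot\theta=r^{n-1}\sin((n-1)\theta)+O(1)$, so the equilibrium directions are the zeros of $\sin((n-1)\theta)$, namely $\theta_k=k\pi/(n-1)$ for $k=0,\dots,2n-3$: a total of $2(n-1)$ points. Passing to the coordinate $\rho=1/r$ toward infinity and applying the standard time rescaling of the compactification, the linearization at $\theta_k$ has a radial eigenvalue of sign $-(-1)^k$ and a transverse (angular) eigenvalue of sign $(-1)^k$; these are always opposite, so every equilibrium at infinity is a hyperbolic saddle. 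For $n=3$ this is precisely the computation giving $\dot s=2s(s^2+1)$ on the equator.

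Part (c) is where the rectifying coordinate $w=\Phi(z)$, with $\Phi'=1/f$, carries the argument: there $\dot w=1$, every orbit is a horizontal line, and each canonical region unfolds to a horizontal strip. The change of $\Phi$ around a small loop about a finite equilibrium equals $2\pi i\,\mathrm{Res}(1/f,\cdot)$, which is $2\pi i(-1/\alpha)$ at the origin and $2\pi i/((n-1)\alpha)$ at each $z_k$; these periods are purely real exactly when $a=0$, which is precisely why the orbits close up (centers) in that case and instead spiral or converge (foci or nodes) when $a\neq0$. When $a\neq0$ the finite equilibria are hyperbolic of opposite stability, there are no limit cycles, and the separatrices issuing from the infinity saddles cut the disk into $n-1$ sectors, each anchored by one of the nondegenerate equilibria $z_1,\dots,z_{n-1}$ and sharing the origin as the opposite limit set; this gives $n-1$ regions of $\alpha\omega$--type, oriented according to the sign of $a$. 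When $a=0$ all $n$ finite equilibria are centers, each surrounded by its own nest of closed orbits, so the disk splits into $n$ center-type regions separated by the separatrices joining the equilibria at infinity.

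The main obstacle is the global separatrix bookkeeping in part (c): one must prove that the separatrices of the $2(n-1)$ infinity saddles connect exactly as claimed --- to the finite equilibria when $a\neq0$ and to one another when $a=0$ --- and that no spurious connections or extra regions appear. I would pin this down by combining the local normal forms (hyperbolic or center at the finite points, saddle at infinity), the absence of limit cycles, and a Poincaré--Hopf index count as a consistency check, while certifying parallel flow inside each region through a single-valued branch of $\Phi$. The delicate comparison is between the saddle directions $\theta_k=k\pi/(n-1)$ and the arguments $(\arg\alpha+2\pi k)/(n-1)$ of the finite equilibria; it is this interlacing that fixes the region count as $n-1$ when $a\neq0$ and as $n$ when $a=0$.
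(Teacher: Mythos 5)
Your proposal is correct and follows the same overall strategy as the paper: classify the finite equilibria through $f'$ (part (a) is identical, including $f'(0)=-\alpha$, $f'(z_k)=(n-1)\alpha$, and the triviality of the center--focus problem for holomorphic fields), compactify to study infinity, and use the rectifying coordinate $w=\Phi(z)$ for the global picture. Two of your steps, however, take genuinely different technical routes. For part (b), the paper computes in the directional charts $U_1$, $U_2$, expands $(1+is)^n=A_n(s)+iB_n(s)$, locates the boundary equilibria at $s=\tan\!\left(\tfrac{k\pi}{n-1}\right)$ and $\cot\!\left(\tfrac{k\pi}{n-1}\right)$, and certifies saddles via the Jacobian determinant $-(n-1)(1+p_k^2)^{n-1}$ together with the sign alternation of $s'$; your polar computation $\dot r=r^n\cos((n-1)\theta)+O(r)$, $\dot\theta=r^{n-1}\sin((n-1)\theta)+O(1)$ reaches the same $2(n-1)$ saddles with opposite-sign radial and angular eigenvalues, and it does so uniformly in $n$, avoiding the paper's even/odd bookkeeping of how many equilibria fall into each chart. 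For part (c), your period argument --- the monodromy of $\Phi$ around a simple zero equals $2\pi i\operatorname{Res}(1/f,\cdot)$, which is real exactly when $a=0$ --- does not appear in the paper at all, and it actually gives a cleaner justification of the dichotomy between center-type regions and $\alpha\omega$-regions than the paper provides: the paper's treatment of (c) amounts to asserting that the claims ``follow from the preliminary analysis,'' then pinning down the separatrix connections by stability matching in the single worked example $n=3$, $\alpha=1$, and pointing to the figures for $n=3,4$. The separatrix bookkeeping you flag as the main outstanding obstacle is thus left at essentially the same informal level in the paper itself, and your plan for closing it (matching separatrix limit sets against the unique attractor/repellers, non-existence of limit cycles for holomorphic fields, and an index count) is precisely the argument the paper deploys in its example, so nothing in your approach would fail.
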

\begin{proof}
The proof of (a)  has already been established in the previous proposition.
For (b),  we begin by denoting
\[z^n-\alpha z = u(x,y)+i\,v(x,y), \qquad z=x+iy,\ \ \alpha=a+bi.\]
Equivalently,
\[u(x,y)=\Re\!\big((x+iy)^n\big) - (ax - by),
\qquad
v(x,y)=\Im\!\big((x+iy)^n\big) - (ay + bx).\]
To study the infinite equilibrium points, we use the Poincaré Compactification. The
expressions of Poincaré Compactification in the charts $U_1$ and $U_2$ are given by
\[(U_1)\quad s'=w^n\left(-su\left(\dfrac{1}{w},\dfrac{s}{w}\right)+v\left(\dfrac{1}{w},\dfrac{s}{w}\right)\right),\quad  
w'=-w^{n+1}u\left(\dfrac{1}{w},\dfrac{s}{w}\right)\]
and
\[(U_2)\quad s'=w^n\left(-sv\left(\dfrac{s}{w},\dfrac{1}{w}\right)+u\left(\dfrac{s}{w},\dfrac{1}{w}\right)\right),\quad  
w'=-w^{n+1}v\left(\dfrac{s}{w},\dfrac{1}{w}\right).\]
Performing the computations in chart $(U_1)$ (with the change of variables
$x=\frac{1}{w}$, $y=\frac{s}{w}$ so that $z=\frac{1+is}{w}$), write
\[
(1+is)^n = A_n(s)+i\,B_n(s).
\]
Then
\[
u\!\left(\tfrac{1}{w},\tfrac{s}{w}\right)=\frac{A_n(s)}{w^{n}}-\frac{a-bs}{w},
\qquad
v\!\left(\tfrac{1}{w},\tfrac{s}{w}\right)=\frac{B_n(s)}{w^{n}}-\frac{as+b}{w}.
\]
and 
\[s' = -sA_n(s)+B_n(s)-b(s^2+1)w^{n-1},\qquad
w' =-wA_n(s)+w^{n}\big(a-bs\big).\]
In particular, on the boundary $w=0$ we obtain
\[
s' = -sA_n(s)+B_n(s),\qquad w'=0.
\]

For reference, the polynomials $A_n,B_n$ are
\[
A_n(s)=\sum_{m=0}^{\lfloor n/2\rfloor}\binom{n}{2m}(-1)^m s^{2m}=1+...,
\quad
B_n(s)=\sum_{m=0}^{\lfloor (n-1)/2\rfloor}\binom{n}{2m+1}(-1)^m s^{2m+1}=ns+....
\]
The equilibria of this system are given by
\[
p_k = \tan\!\left(\frac{k\pi}{\,n-1\,}\right).
\]
If $n$ is even, then there are $(n-1)$ equilibria, while if $n$ is odd, there are $(n-2)$ equilibria. 
By performing a similar analysis in chart $(U_2)$, we obtain $n-2$ equilibria given by $q_k=\cot\!\left(\frac{k\pi}{\,n-1\,}\right)$. 
Combining the information from charts $(U_1)$ and $(U_2)$, we obtain $2(n-1)$ equilibria at infinity, denoted by $e_0, e_1, \ldots, e_{2n-3}$, where 
\[e_k = \frac{k\pi}{\,n-1\,},  k = 0, \ldots, 2n-3.\]
To verify that the equilibria are of saddle type, it suffices to compute the determinant of the Jacobian matrix and check that it is negative. 
At the points $p_k$, we have 
\[
\det = -(n-1)\,(1+p_k^2)^{\,n-1},
\]
which satisfies the required condition. Moreover, we can also verify that the signs of $s'$ in the intervals $(p_k, p_{k+1})$ alternate, 
being positive when $k$ is even and negative when $k$ is odd. 

The statements concerning the canonical regions follow from the preliminary analysis we have carried out of the finite and infinite equilibria. 
Consider, for instance, the case $n = 3$ and let us take $\alpha = 1$ to simplify the computations. 
In this case, we have three equilibria ($0$, $1$, and $-1$) in the finite part and four equilibria at infinity. 
The points $e_0$ and $e_2$ are repelling in the angular direction and attracting in the radial direction, 
while the points $e_1$ and $e_3$ are attracting in the angular direction and repelling in the radial direction. 
Thus, the separatrices $S_1$ and $S_3$ must necessarily have their $\omega$-limit at the origin, the unique attracting equilibrium. 
On the other hand, the separatrices $S_0$ and $S_2$ must have their $\alpha$-limit at some repelling equilibrium. 
The only possible configuration is the one shown in the Figure \ref{ret-b-1}. In Figures~\ref{ret-b-1} and \ref{ret-b-2} we show the possible phase portraits for $n=3$, the first one corresponding to the case $\operatorname{Re}(\alpha)>0$ and the second one to the case $\operatorname{Re}(\alpha)=0$.  
Similarly, in Figures~\ref{ret-b-3} and \ref{ret-b-4} we present the possible phase portraits for $n=4$, the first one corresponding to the case $\operatorname{Re}(\alpha)>0$ and the second one to the case $\operatorname{Re}(\alpha)=0$.
\end{proof}
\begin{figure}[ht]
\centering
\begin{minipage}[t]{0.5\textwidth}
\centering
\includegraphics[width=\linewidth]{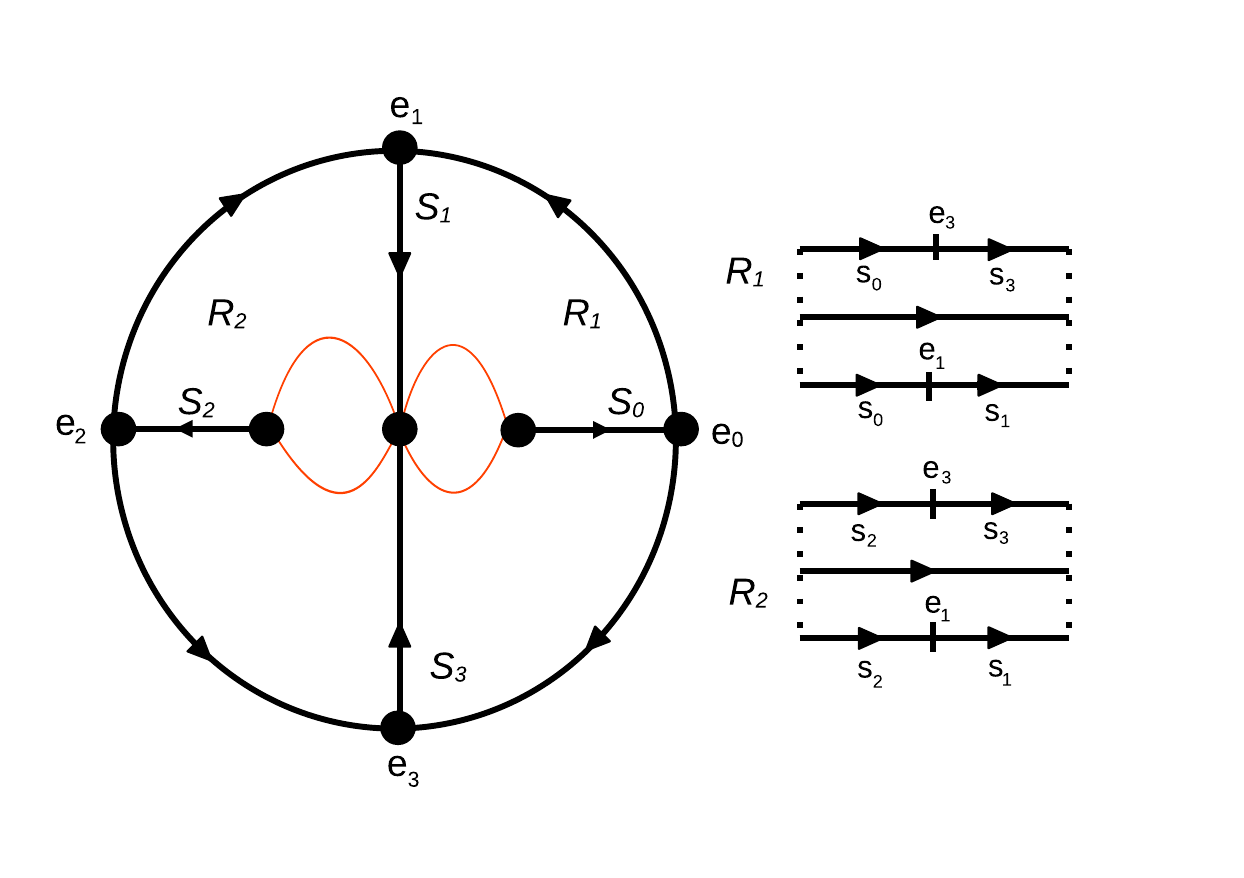} 
\caption{Phase portrait of $\dot{z} = z^3-z$ (left) and its rectification via $w = \Phi(z)$ (right).}
\label{ret-b-1}
\end{minipage}\hfill
\begin{minipage}[t]{0.5\textwidth}
\centering
\includegraphics[width=\linewidth]{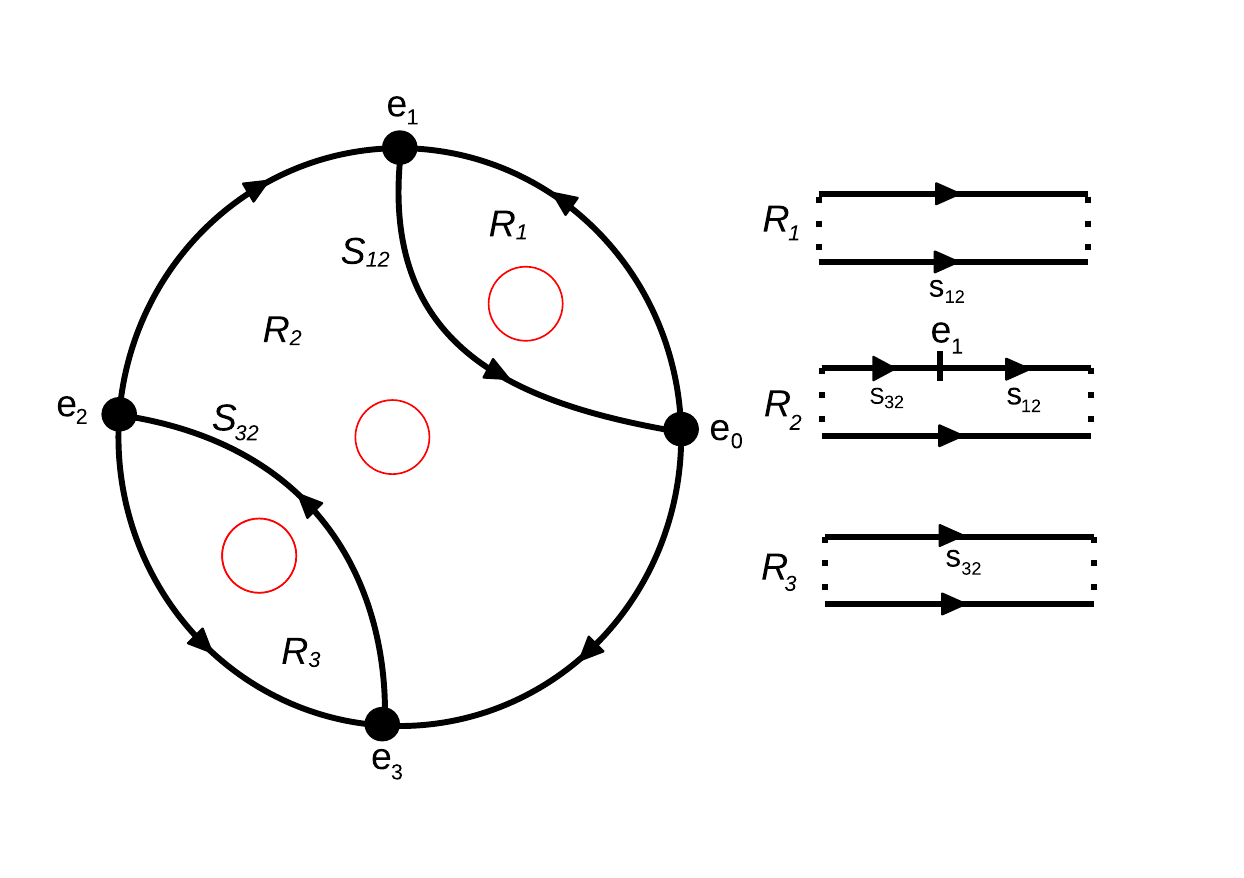} 
\caption{Phase portrait of $\dot{z} = z^3-iz$ (left) and its rectification via $w = \Phi(z)$ (right).}
\label{ret-b-2}
\end{minipage}
\end{figure}

\begin{figure}[ht]
\centering
\begin{minipage}[t]{0.5\textwidth}
\centering
\includegraphics[width=\linewidth]{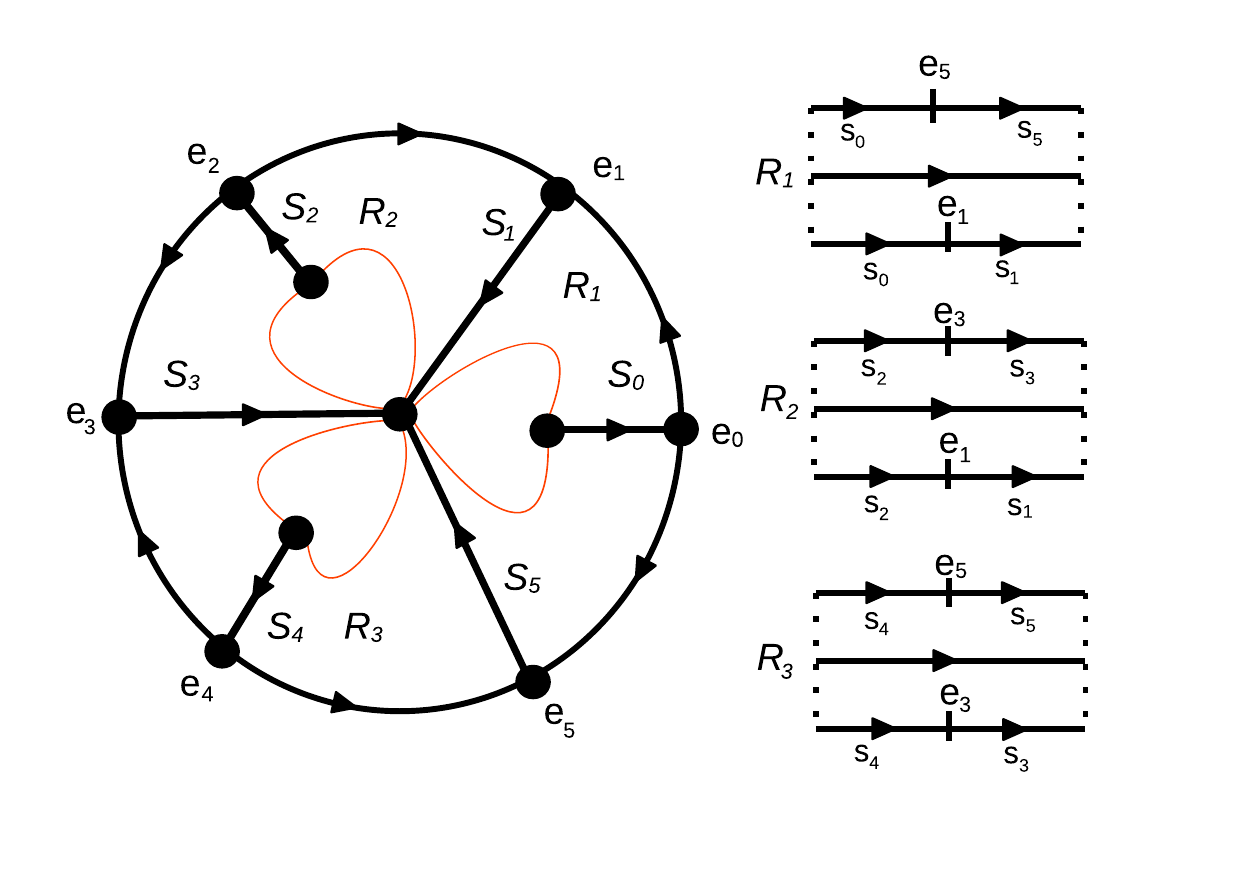} 
\caption{Phase portrait of $\dot{z} = z^4-z$ (left) and its rectification via $w = \Phi(z)$ (right).}
\label{ret-b-3}
\end{minipage}\hfill
\begin{minipage}[t]{0.5\textwidth}
\centering
\includegraphics[width=\linewidth]{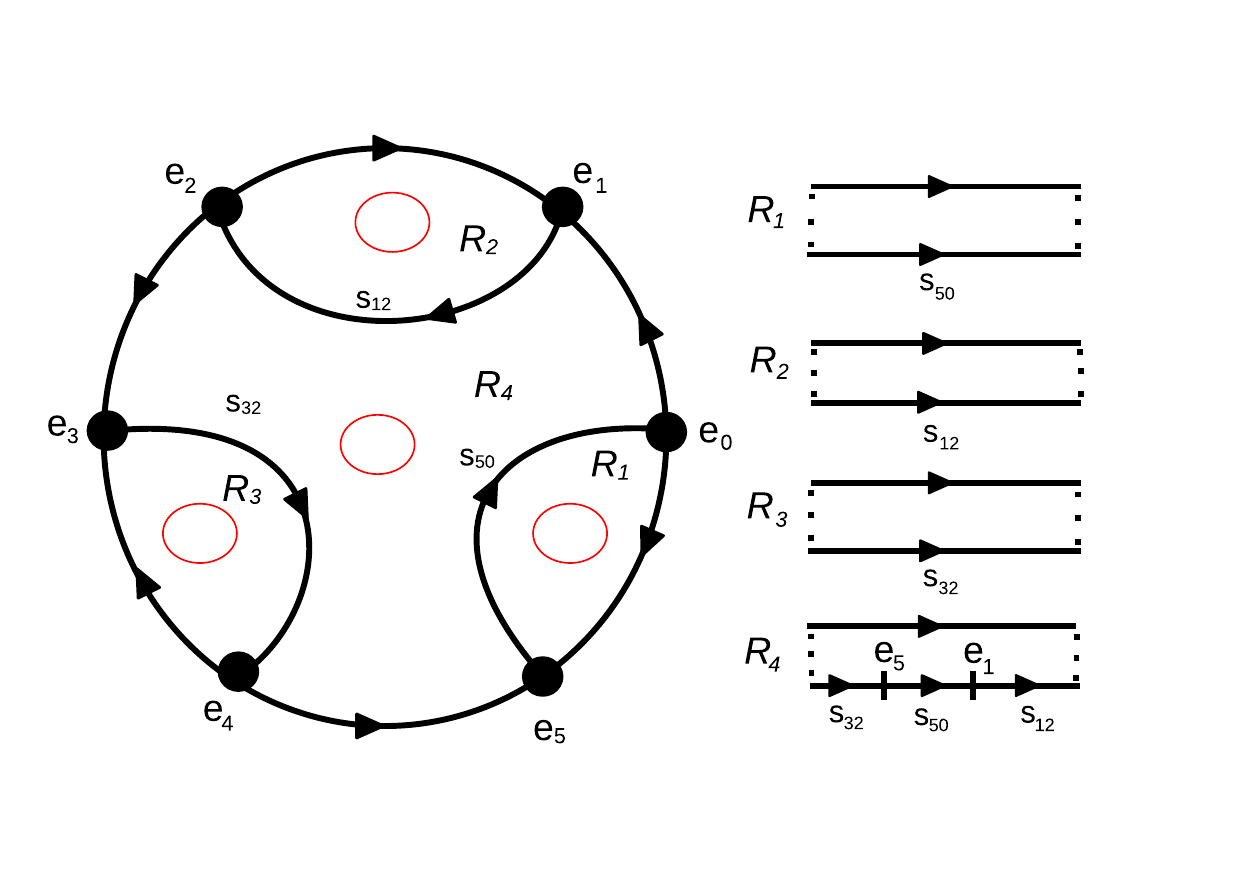} 
\caption{Phase portrait of $\dot{z} = z^4+iz$ (left) and its rectification via $w = \Phi(z)$ (right).}
\label{ret-b-4}
\end{minipage}
\end{figure}

\section{Bounds for limit cycles in a special class of piecewise smooth systems}\label{sec:cycles}
Holomorphic planar vector fields are Hamiltonian with respect to a harmonic potential, consequently, they cannot possess limit cycles. This fundamental obstruction disappears when we consider piecewise holomorphic systems (PWHS), where the loss of global analyticity across a switching manifold opens the door to isolated periodic orbits. Recent works have systematically explored this possibility, establishing lower bounds for the number of limit cycles in piecewise polynomial holomorphic systems \cite{GSR2, GSR3}, providing conditions for their existence, stability, and uniqueness, and even exhibiting explicit piecewise linear examples with up to three limit cycles, a notable improvement over earlier constructions that yielded at most one.

The analysis of PWHS is greatly simplified by the underlying complex structure. The normal forms associated with holomorphic functions admit polar representations whose symmetry, together with the invariance of certain rays, facilitates the explicit construction of first return maps. Moreover, for anti‑holomorphic vector fields of the form $\dot{z} = \overline{p(z)}$ with $p$ polynomial, integration yields a polynomial complex potential $\Omega = \int p(z) dz$, whose imaginary part provides a polynomial first integral. This algebraic property is what makes the matching conditions for closed orbits across the discontinuity line tractable: it reduces the problem to finding common zeros of two explicit symmetric polynomials.

Throughout this section, we fix the switching line as the real axis
\[
\Sigma = \{z \in \mathbb{C} : \Im(z) = 0\},
\]
and adopt the Filippov convention to define the flow on $\Sigma$ (see \cite{Filippov88}).

We exploit this framework to study two families where the polynomial first integral yields particularly sharp upper bounds:
\begin{enumerate}
\item \textbf{Mixed anti‑holomorphic–holomorphic systems}, where one half‑plane is governed by the conjugate of a linear polynomial and the other by a linear holomorphic field. The position of the equilibrium (on or off the switching line) drastically influences the maximum number of limit cycles.
\item \textbf{Piecewise anti‑holomorphic polynomial systems}, where both half‑planes are ruled by the conjugate of polynomials of the same degree, and therefore both possess polynomial first integrals. The bound depends directly on the degree of the polynomials.
\end{enumerate}
The results presented here complement the aforementioned literature by providing \emph{upper bounds} obtained through this unified algebraic approach, thereby delineating the possible dynamical complexity of these piecewise holomorphic models.

\subsection{Mixed anti‑holomorphic–holomorphic systems}
Systems of the form $\dot{z}=\overline{p(z)}$, where $p(z)$ is a polynomial of degree $n$, are 
especially interesting. Indeed, according to Theorem \ref{teoalg}, all their trajectories are algebraic 
since $\Omega=\int p(z)\,dz$ is a polynomial potential. Hence, we have that $\psi(x,y)=\Im(\Omega)$ is a polynomial first integral.

\begin{theorem}\label{teo_a}
Consider the piecewise planar dynamical system defined by
\[
z' =
\begin{cases}
\overline{p^+(z)}, & \text{if } \Im(z) > 0, \\
p^-(z), & \text{if } \Im(z) < 0,
\end{cases}
\]
where $z\in\mathbb{C}$, with
\[
p^+(z) = (a_1 + i a_2)z + (b_1 + i b_2), \quad
p^-(z) = (a + i b)(z-x_0),
\]
and $a_1,a_2,b_1,b_2,a,b,x_0\in\mathbb{R}$, $a_2\neq0$, $b\neq0$.
Then, the system has at most one limit cycle.

Moreover, this upper bound is sharp.
If such a limit cycle exists and $a\neq0$, then it is hyperbolic and its
stability is completely determined by the sign of $a$:
it is stable if $a<0$ and unstable if $a>0$.
\end{theorem}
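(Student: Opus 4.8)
The plan is to reduce the existence of crossing periodic orbits to the fixed points of a one–dimensional Poincaré map on the switching line $\Sigma=\{y=0\}$, obtained by composing two half–return maps that are made fully explicit through the first integrals of each side. I parametrize the points of $\Sigma$ by their abscissa $x$. For the upper (anti–holomorphic) half–plane, Theorem~\ref{teoalg} provides the polynomial first integral $\psi^+=\Im\Omega^+$ with $\Omega^+(z)=\int p^+\,dz=\tfrac{a_1+ia_2}{2}z^2+(b_1+ib_2)z$, and a direct expansion gives $\psi^+(x,0)=\tfrac{a_2}{2}x^2+b_2x$. The Hessian of $\psi^+$ equals $\left(\begin{smallmatrix} a_2 & a_1\\ a_1 & -a_2\end{smallmatrix}\right)$, with negative determinant $-(a_1^2+a_2^2)$ (the equilibrium of $\overline{p^+}$ being a saddle), so the level sets $\psi^+=\text{const}$ are hyperbolas whose upper arcs join the two real points sharing the same value of $\tfrac{a_2}{2}x^2+b_2x$. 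Hence the upper half–return map is the involution $A\mapsto B$ characterised by $A+B=-2b_2/a_2=:2c^+$, that is, the reflection about $c^+$.

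Next I would treat the lower (holomorphic) half–plane. By the Corollary, the field $f^-=(a+ib)(z-x_0)$ admits the first integral $\psi^-=\Im\Omega^-$ with $\Omega^-(z)=\int\frac{dz}{f^-}=\frac{1}{a+ib}\log(z-x_0)$, so that $\psi^-=\frac{1}{a^2+b^2}\bigl(a\arg(z-x_0)-b\log|z-x_0|\bigr)$. The decisive point is that the equilibrium $x_0$ lies on $\Sigma$: an orbit that enters the lower half–plane and returns to $\Sigma$ sweeps exactly the angle $\pi$ about $x_0$, so the transit time $\Delta t=\pi/|b|$ is the same for every entry point. Solving $z(t)-x_0=(z_0-x_0)e^{(a+ib)t}$, or equivalently invoking conservation of $\psi^-$, yields $|A-x_0|=\nu\,|B-x_0|$ with $\nu:=e^{a\pi/|b|}$, while $A$ and $B$ lie on opposite sides of $x_0$. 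The lower half–return map is therefore the linear map $A-x_0=-\nu\,(B-x_0)$.

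Composing the two maps produces, in the coordinate $\xi=B-x_0$, the affine return map $F(\xi)=\nu\,\xi+2(c^+-x_0)$, whose multiplier is exactly $\nu=e^{a\pi/|b|}$. An affine map has at most one fixed point when $\nu\neq1$, i.e.\ when $a\neq0$; and when $a=0$ one has $\nu=1$, so $F$ has either no fixed point (if $c^+\neq x_0$) or a whole continuum (a period annulus, hence no isolated orbit). In all cases there is at most one crossing limit cycle. Since $F'(\xi^*)=\nu$, the unique cycle is hyperbolic precisely when $a\neq0$, stable when $\nu<1\Leftrightarrow a<0$ and unstable when $\nu>1\Leftrightarrow a>0$, as asserted. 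Sharpness I would establish by choosing parameters with $a\neq0$ and $c^+\neq x_0$ so that $\xi^*=2(c^+-x_0)/(1-\nu)$ is positive and the associated arcs meet $\Sigma$ transversally in the crossing region, exhibiting one explicit example.

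The step I expect to be the main obstacle is the rigorous bookkeeping behind the two half–maps: verifying that they are genuinely defined on the relevant subinterval of $\Sigma$ (the upper hyperbola arc remaining in $y>0$ and the lower spiral arc remaining in $y<0$), and that the candidate fixed point corresponds to a true crossing orbit rather than one interacting with a sliding segment. The clean affine form of $F$, and with it the entire uniqueness and stability conclusion, rests on the fact that the angle swept in the lower half–plane equals $\pi$ independently of the starting point — which is precisely the hypothesis that the holomorphic equilibrium $x_0$ lies on the switching line.
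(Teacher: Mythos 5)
Your proposal is correct and follows essentially the same route as the paper's proof: the imaginary part of the polynomial potential in the upper half-plane forces the reflection condition $x_1+x_2=-2b_2/a_2$, the polar/exponential structure of the lower holomorphic focus centered at $x_0\in\Sigma$ forces the ratio condition with factor $e^{a\pi/|b|}$, and combining the two gives at most one crossing pair, with the Poincar\'e multiplier $e^{a\pi/|b|}$ yielding the hyperbolicity and stability claims. The only differences are presentational: you package the two crossing conditions into a single affine return map (which has the minor advantage of handling the degenerate case $a=0$, i.e.\ multiplier $1$, explicitly as ``no fixed point or a continuum''), whereas the paper solves the two equations directly and defers sharpness to its Example~\ref{ex_cl_b}.
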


\begin{proof}
Perform the translation $w=z-x_0$, which is a real horizontal shift since
$x_0\in\mathbb{R}$. Then $\Im(w)=\Im(z)$, so the switching line remains the real axis.

In the new variable, the system becomes
\[
w' =
\begin{cases}
\overline{p^+(w+x_0)}, & \text{if } \Im(w)>0,\\
(a+ib)w, & \text{if } \Im(w)<0.
\end{cases}
\]

The lower half-plane system is $\dot w=(a+ib)w$.
The upper half-plane system can be written as
\[
\dot w=\overline{(a_1+ia_2)w+(\tilde b_1+i\tilde b_2)},
\]
where $\tilde b_1=a_1x_0+b_1,\, \tilde b_2=a_2x_0+b_2.$ For the upper half-plane, define the complex potential
\[
\Omega^+(w)=\frac{A^+w^2}{2}+B^+w,
\quad A^+=a_1+ia_2,\quad B^+=\tilde b_1+i\tilde b_2.
\]
Its imaginary part
\[
\psi^+(x,y)=\Im\Omega^+(w)=\tilde{b}_2 x + \tilde{b}_1 y + \frac{a_2}{2} x^2 + a_1 x y - \frac{a_2}{2} y^2,\quad w=x+iy,
\]
is a first integral of the flow.
Therefore, any trajectory crossing the real axis at $(x_1,0)$ and $(x_2,0)$
satisfies
$
\psi^+(x_1,0)=\psi^+(x_2,0),
$
which yields
\[
(x_1-x_2)\left(\tilde b_2+\frac{a_2}{2}(x_1+x_2)\right)=0.
\]
Since $x_1\neq x_2$ for a nontrivial crossing, we obtain
\begin{equation}\label{rel1}
x_2=-x_1-\frac{2\tilde b_2}{a_2}.
\end{equation}

\medskip

In the lower half-plane, writing $w=re^{i\theta}$, the system becomes $\dot r=ar,\, \dot\theta=b,$
and admits the first integral
$
H^-(r,\theta)=b\ln r-a\theta.
$

Let $x_1$ and $x_2$ be the intersection points of a closed trajectory
with the real axis.
Since $r=|x|$ and $\theta=0$ for $x>0$, $\theta=\pi$ for $x<0$,
conservation of $H^-$ gives
\[
b\ln\left|\frac{x_2}{x_1}\right|=a(\theta_2-\theta_1).
\]

If $x_1$ and $x_2$ have the same sign, then $\theta_1=\theta_2$ and
$|x_1|=|x_2|$, which implies $x_1=x_2$, contradicting the existence of a
limit cycle.

If $x_1$ and $x_2$ have opposite signs, without loss of generality
$x_1>0$, $x_2<0$, so $\theta_1=0$, $\theta_2=\pi$, and
$\ln\left(-x_2/x_1\right)=a\pi/b.$

Combining this with \eqref{rel1}, we obtain
$
x_1=2\tilde b_2/a_2\left(e^{a\pi/b}-1\right),
$
which uniquely determines the crossing points.
Hence, the system has at most one limit cycle.
Since the translation $w=z-c$ is a diffeomorphism, the same holds for the original system. This upper bound is sharp, as illustrated in Example \ref{ex_cl_b}.

Now, assume that the limit cycle exists and let $\Pi:\mathbb{R}^+\to\mathbb{R}^+$
be the Poincar\'e return map defined on the positive real axis.
The map decomposes as $\Pi=F_L\circ F_U$, where $F_U$ corresponds to the
transition through the upper half-plane and $F_L$ to the transition through
the lower half-plane.

Let $x_1^*>0$ and $x_2^*<0$ denote the intersection points of the limit cycle
with the real axis, so that
$x_2^* = F_U(x_1^*), \, x_1^* = \Pi(x_1^*).$ From \eqref{rel1}, $F_U(x_1)=-x_1-\frac{2\tilde b_2}{a_2}$ and
$
F_U'(x_1^*)=-1.
$

In the lower half-plane, the explicit flow is
$r(t)=r_0e^{at}$, $\theta(t)=\theta_0+bt$.
Starting at $(x_2^*,0)$ with $x_2^*<0$, we have $r(0)=-x_2^*$ and $\theta(0)=\pi$.
The trajectory returns to the positive real axis at the first positive time
$T=\pi/|b|$.

At time $T$,
\[
F_L(x_2^*)=(-x_2^*)e^{a\pi/|b|},
\qquad
F_L'(x_2^*)=-e^{a\pi/|b|}.
\]

Therefore, the derivative of the Poincar\'e map at the fixed point $x_1^*$ is
\[
\Pi'(x_1^*) = F_L'(x_2^*)\,F_U'(x_1^*)
= e^{a\pi/|b|}.
\]

Since $|\Pi'(x_1^*)|\neq1$ for $a\neq0$, the limit cycle is hyperbolic.
Moreover, it is stable if $a<0$ and unstable if $a>0$.
\end{proof}

The previous result (Theorem~\ref{teo_a}) deals with the situation where the 
equilibrium point of the lower system lies on the switching line, i.e. \( z_0 = x_0 \in \mathbb{R} \). This restriction forces the lower flow to be rotationally symmetric with 
respect to a real center, leading to a particularly simple first integral and allowing a complete characterization of the limit cycle.

In contrast, the next theorem considers the general case where the equilibrium \( z_0 = x_0 + i y_0 \) of the lower vector field is an arbitrary point of the complex plane.  
Now \( z_0 \) is no longer constrained to lie on the switching line. It may be anywhere in the lower half‑plane (or even in the upper half‑plane, although the 
dynamics in \( \Im(z) < 0 \) is still governed by \( p^-(z) \)).  
This geometric freedom breaks the rotational symmetry of the lower flow, and the matching conditions between the two half‑planes become more intricate. As a consequence, the upper bound for the number of limit cycles increases, and the stability analysis, while still feasible, is considerably more involved.

The following theorem provides the optimal upper bound for this general configuration.

\begin{theorem}\label{teo_b}
Consider the piecewise planar dynamical system defined by:
\[
z' = 
\begin{cases} 
\overline{p^+(z)}, & \text{if } \Im(z) > 0, \\
p^-(z), & \text{if } \Im(z) < 0,
\end{cases}
\]
where \( z = x + iy \in \mathbb{C} \), with
\[
p^+(z) = (a_1 + i a_2)z + (b_1 + i b_2), \quad 
p^-(z) = (a + i b)(z - z_0),
\]
and \( a_1, a_2, b_1, b_2, a, b \in \mathbb{R} \), 
\( z_0 = x_0 + i y_0 \in \mathbb{C} \), 
\( a_2 \neq 0 \), \( b \neq 0 \).  
Then, the system has at most three limit cycles.
\end{theorem}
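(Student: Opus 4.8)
The plan is to reduce the existence of a limit cycle to a single displacement equation on the switching line and then to bound its zeros by controlling the zeros of its derivative. First I would perform the real translation $w=z-x_0$, which fixes the switching line $\Im z=0$ because $x_0\in\mathbb{R}$, thereby reducing to the case $z_0=iy_0$; this leaves the form of the upper field unchanged. Exactly as in the proof of Theorem~\ref{teo_a}, the anti-holomorphic upper field carries the polynomial first integral $\psi^+=\Im\,\Omega^+$ (Theorem~\ref{teoalg}), and restricting $\psi^+$ to the real axis gives a quadratic whose level relation $\psi^+(x_1,0)=\psi^+(x_2,0)$ forces the affine involution
\[
x_1+x_2=c,\qquad c=-\frac{2\tilde b_2}{a_2},
\]
so the upper transition is $F_U(x)=c-x$ with $F_U'\equiv-1$. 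For the lower half-plane I would use the holomorphic first integral $\psi^-=\Im\int \frac{dz}{p^-}=\Im\!\big(\tfrac{1}{a+ib}\log(z-z_0)\big)$ furnished by the potential of Theorem~\ref{teopsi}; up to a positive factor this is $H^-(z)=b\log|z-z_0|-a\arg(z-z_0)$.

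The core of the argument is that a limit cycle crossing the axis at $x_1$ and $x_2$ must respect both conservation laws, so substituting $x_2=c-x_1$ into $H^-(x_1)=H^-(x_2)$ shows that every such limit cycle is a zero of the displacement function
\[
\Delta(x)=H^-(c-x)-H^-(x).
\]
The key computation is that, with $x_0=0$, the restriction of $H^-$ to the axis has the rational derivative
\[
(H^-)'(x)=\frac{bx-ay_0}{x^2+y_0^2},
\]
so that $\Delta'(x)=-(H^-)'(c-x)-(H^-)'(x)$ clears to a polynomial in which the two cubic terms cancel, leaving a polynomial of degree at most two. Hence $\Delta'$ has at most two zeros, and by Rolle's theorem $\Delta$ has at most three zeros, giving at most three limit cycles. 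As a consistency check, the case $y_0=0$ collapses $(H^-)'$ to $b/x$, forcing $\Delta'(x)=-bc/\big(x(c-x)\big)$ to be nonvanishing when $c\neq0$; then $\Delta$ is strictly monotone and one recovers the single limit cycle of Theorem~\ref{teo_a}.

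The main obstacle I expect is the multivaluedness of $\arg(z-z_0)$ and the associated winding. Along the lower arc joining two consecutive crossings the argument is defined only up to a branch, so $\Delta$ may jump by a constant between different topological types of transition, exactly as the $-a\pi$ term appears in Theorem~\ref{teo_a}. The feature that makes the bound robust is that $\Delta'$ is independent of the branch, being the derivative of a locally smooth first integral; the count of two critical points, and hence three zeros, is therefore unaffected by these constants. What still requires care is to delimit precisely the intervals of $x$ on which the lower transition is genuinely defined (the orbit staying in $\Im z<0$), to rule out that orbits crossing the axis more than twice—coupled through the involution $F_U$—produce additional cycles beyond those counted by $\Delta$, and to confirm sharpness by exhibiting an explicit configuration realizing three cycles.
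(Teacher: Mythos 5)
Your proposal follows essentially the same route as the paper's own proof: the upper anti-holomorphic first integral forces the affine involution $x_2=-x_1-\tfrac{2\tilde b_2}{a_2}$, the lower spiral integral $b\ln r-a\theta$ produces a displacement function whose derivative, after clearing denominators, is a quadratic because the cubic terms cancel, and Rolle's theorem then bounds the zeros by three. The branch/winding subtlety you flag at the end is likewise left implicit in the paper, which simply fixes the principal branch via $\operatorname{atan2}$, so your argument matches the paper's proof both in structure and in its level of rigor.
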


\begin{proof}
We treat separately the cases \(y_0 = 0\) and \(y_0 \neq 0\).

\medskip
\noindent\textbf{Case 1: \(y_0 = 0\) (real center).}  
Then \(z_0 = x_0\) is real, and the lower half-plane system is 
\(\dot{z} = (a+ib)(z-x_0)\).  
This situation was already analysed in Theorem~\ref{teo_a}, where it was shown
that there exists at most one limit cycle.

\medskip
\noindent\textbf{Case 2: \(y_0 \neq 0\) (complex center).}  
A limit cycle must intersect the real axis at two distinct points 
\(z_1 = x_1\) and \(z_2 = x_2\) with \(x_1 \neq x_2\).

\smallskip
\noindent\textit{Upper half-plane.}  
The upper system is Hamiltonian with Hamiltonian
\[
\psi^+(x,y) = \Im\!\Big(\frac{(a_1 + i a_2)z^2}{2} + (b_1 + i b_2)z\Big)
          = b_2 x + b_1 y + \frac{a_2}{2}x^2 + a_1 xy - \frac{a_2}{2}y^2 .
\]
Conservation of \(\psi^+\) along the upper arc gives
\[
\psi^+(x_1,0)=\psi^+(x_2,0) \Longrightarrow 
b_2 + \frac{a_2}{2}(x_1+x_2)=0,
\]
hence
\begin{equation}\label{eq:x2}
x_2 = -\,x_1 - \frac{2b_2}{a_2}.
\end{equation}
Define \(L(x) = -x - 2b_2/a_2\), then \(x_2 = L(x_1)\).

\smallskip
\noindent\textit{Lower half-plane.}  
Write \(w = z-z_0 = re^{i\theta}\).  The system becomes
$\dot r = a r,\, \dot\theta = b,$
which admits the first integral
\begin{equation}\label{eq:H}
H(r,\theta)=b\ln r - a\theta.
\end{equation}
On the real axis, \(w_j = x_j - x_0 - iy_0\) (\(j=1,2\)).  Thus
\[
r_j = \sqrt{(x_j-x_0)^2 + y_0^2},\qquad 
\theta_j = \arg(x_j-x_0 - iy_0).
\]

To handle the argument in a continuous way we employ the two-argument arctangent
\(\operatorname{atan2}(y,x)\), defined by
\[
\operatorname{atan2}(y,x)=
\begin{cases}
\arctan(y/x) & \text{if } x>0,\\
\arctan(y/x)+\pi & \text{if } x<0,\;y\ge 0,\\
\arctan(y/x)-\pi & \text{if } x<0,\;y<0,\\
\pi/2 & \text{if } x=0,\;y>0,\\
-\pi/2 & \text{if } x=0,\;y<0,
\end{cases}
\]
which returns the angle in \((-\pi,\pi]\).
Set
\[
\Theta(x)=\operatorname{atan2}(-y_0,\,x-x_0).
\]
For fixed \(y_0\neq0\), \(\Theta\) is a \(C^{\infty}\) function and satisfies
\begin{equation}\label{eq:ThetaPrime}
\Theta'(x)=\frac{y_0}{(x-x_0)^2+y_0^2}.
\end{equation}

Conservation of \(H\) along the lower arc yields
$b\ln(r_2/r_1)=a(\theta_2-\theta_1),$
i.e.
\begin{equation}\label{eq:lower}
b\ln\frac{\sqrt{(x_2-x_0)^2+y_0^2}}
{\sqrt{(x_1-x_0)^2+y_0^2}}
= a\big[\Theta(x_2)-\Theta(x_1)\big].
\end{equation}

Introduce \(R(x)=\sqrt{(x-x_0)^2+y_0^2}\).  
Using \eqref{eq:x2}, equation \eqref{eq:lower} becomes
\begin{equation}\label{eq:Fdef}
b\ln\frac{R(L(x))}{R(x)}
= a\big[\Theta(L(x))-\Theta(x)\big].
\end{equation}

Define
\[
F(x)=b\ln\frac{R(L(x))}{R(x)} 
- a\big[\Theta(L(x))-\Theta(x)\big].
\]

\smallskip
\noindent\textit{Derivative of \(F\).}  
From \eqref{eq:ThetaPrime}, \(L'(x)=-1\), 
\(R(x)R'(x)=x-x_0\) and 
\(R(L(x))R'(L(x))=L(x)-x_0\), we obtain
\begin{equation}\label{eq:Fprime}
\begin{aligned}
F'(x) &=
b\!\left[\frac{L(x)-x_0}{R(L(x))^2}L'(x)
-\frac{x-x_0}{R(x)^2}\right]
-a\!\left[\Theta'(L(x))L'(x)-\Theta'(x)\right] \\
&=
-\,b\!\left[\frac{L(x)-x_0}{R(L(x))^2}
+\frac{x-x_0}{R(x)^2}\right]
+a y_0\!\left[\frac{1}{R(L(x))^2}
+\frac{1}{R(x)^2}\right].
\end{aligned}
\end{equation}

Let \(u=x-x_0\), \(v=L(x)-x_0=-u-C\),
\(C=2x_0+\frac{2b_2}{a_2}\).
Multiplying \eqref{eq:Fprime} by 
\(D(x)=(u^2+y_0^2)(v^2+y_0^2)\) gives
\begin{equation}\label{eq:quadratic}
F'(x)D(x)=
(2a y_0-bC)u^{2}
+(2a y_0C-bC^{2})u
+(a y_0C^{2}+2a y_0^{3}+bC y_0^{2}).
\end{equation}

Equation \eqref{eq:quadratic} shows that the equation \(F'(x)=0\) is equivalent to a quadratic polynomial in \(u\), and therefore has at most two real solutions.  
Let \(m\) be the number of distinct real zeros of \(F\).  
If \(m \ge 4\), then by Rolle's theorem \(F'\) would have at least three distinct real zeros, contradicting the fact that \(F'\) has at most two real zeros.  
Thus \(m \le 3\).

Each real zero of \(F\) corresponds to a symmetric pair \((x_1, x_2 = L(x_1))\) that satisfies the matching conditions for a closed orbit, and hence to a limit cycle of the system. Consequently, the system possesses at most three limit cycles.

\medskip
\noindent
Combining both cases (\(y_0 = 0\) and \(y_0 \neq 0\)), we conclude that the piecewise system has at most three limit cycles. \qedhere
\end{proof}

\begin{example}\label{ex_cl_b}  \normalfont{Let us consider, for instance, a piecewise holomorphic system of the form
\[\dot{z}=\overline{p(z)},\quad\mbox{if}\quad \Im(z)>0;\quad\quad \dot{z}=-i(z-\beta)\quad\mbox{if}\quad \Im(z)<0\]
where
$p(z)=(a+bi)(z-ci),\quad a,b,c,\beta\in\R,c>0.$
According to the discussion above, 
$
\Omega= (a+bi)\,\frac{(z-ci)^2}{2},
$
is a complex potential of the system on $\{\Im z>0\}$. Hence a (polynomial) first integral is
$
\psi(x,y)=\Im\Omega
=  -ac\,x+bc\,y +   axy+\frac{b}{2}\bigl(x^{2}-y^{2}\bigr)-\frac{bc^{2}}{2}.
$
Since first integrals are defined up to an additive constant, we may equivalently take
$
 \psi(x,y)=-ac\,x+bc\,y +   axy+\frac{b}{2}\bigl(x^{2}-y^{2}\bigr).$
Note that $z_0=ci$ is an equilibrium point of $\dot{z}=\overline{p(z)}$. 
This equilibrium is a saddle point. Indeed, writing the system in real planar coordinates, we obtain
$$
x' = ax - by + bc, \quad y' = -bx - ay + ac,
$$
whose characteristic polynomial is
$
\lambda^2 - a^2 - b^2 = 0.
$
Thus, there exist two eigenvalues with opposite signs. The invariant directions are two straight lines $y=kx+c$, where
$
k=\frac{a \pm \sqrt{a^2+b^2}}{b}.
$
To illustrate, consider for instance $b=1$, $a=2$, $c=5$ and $\beta=10$. 
In this case we have
$
\psi(x,y)=-10x+5y+2xy+\frac{x^2}{2}-\frac{y^2}{2}.
$
The level $0$ intersects the $x$-axis at two points, $(0,0)$ and $(20,0)$. 
In the lower half-plane, the phase portrait is of center type. Since the center is exactly the midpoint of 
the segment between $x=0$ and $x=20$, it follows that there exists an orbit in $y<0$ connecting these two points. 
Thus, we obtain a cycle of the piecewise smooth system passing through the level set $\psi=0$, see Figure~\ref{figanti}.}\end{example}

\begin{figure}
	\centering 
	\includegraphics[width=7cm, height=5cm]{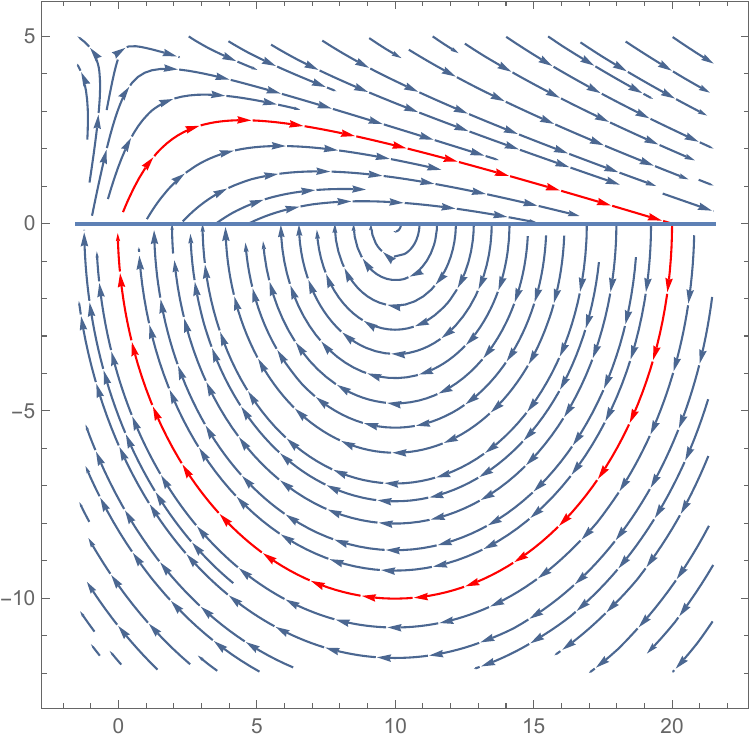} 
	\caption{Phase portrait of a piecewise holomorphic system, with the upper half-plane governed by an antiholomorphic polynomial.}\label{figanti}
\end{figure}

\subsection{Antiholomorphic polynomial systems}

We now turn our attention to a general class of systems where both the upper and lower half-planes are governed by arbitrary anti-holomorphic polynomials of the same degree. Our goal is to understand how the degree of these functions influences the number of limit cycles the system may exhibit.

We now consider the following piecewise anti-holomorphic polynomial system
\begin{equation}\label{main_eq}
\begin{aligned}
\left\{\begin{array}{l}
\dot{z}=\overline{p^+(z)}, \quad \Im(z)>0,\\[2pt]
\dot{z}=\overline{p^-(z)}, \quad \Im(z)<0,
\end{array} \right.
\end{aligned}
\end{equation}
where $z=x+iy$ and $p^\pm$ are complex polynomials.

Now, we establish a general result that bounds the number of limit cycles according to the degree of the polynomials.

\begin{theorem}\label{thm:limit_cycles}
For the piecewise anti-holomorphic polynomial system given by \eqref{main_eq}, the following upper bounds on the number of limit cycles hold:
\begin{enumerate}
    \item If \( p^+(z) \) and \( p^-(z) \) are linear polynomials, then the system has no limit cycles.
    \item If \( p^+(z) \) and \( p^-(z) \) are quadratic polynomials, then the system has at most one limit cycle.
    \item If \( p^+(z) \) and \( p^-(z) \) are cubic polynomials, then the system has at most three limit cycles.
\end{enumerate}
\end{theorem}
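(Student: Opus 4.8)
The plan is to reduce the existence of a crossing limit cycle to an algebraic system in the two abscissae at which the cycle meets $\Sigma=\{\Im z=0\}$, exactly as in the proofs of Theorems~\ref{teo_a} and \ref{teo_b}, but now exploiting that \emph{both} half-planes carry a polynomial first integral. By Theorem~\ref{teoalg}, on $\{\Im z>0\}$ the flow preserves $\psi^+=\Im\Omega^+$ with $\Omega^+=\int p^+\,dz$, and on $\{\Im z<0\}$ it preserves $\psi^-=\Im\Omega^-$ with $\Omega^-=\int p^-\,dz$. Writing $P(x)=\psi^+(x,0)$ and $Q(x)=\psi^-(x,0)$, these are real polynomials of degree $\deg p^\pm+1$, namely $2$, $3$, $4$ in the linear, quadratic and cubic cases. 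A crossing limit cycle meeting $\Sigma$ at $x_1\neq x_2$ must satisfy the matching conditions $P(x_1)=P(x_2)$ and $Q(x_1)=Q(x_2)$, so bounding the number of admissible pairs $\{x_1,x_2\}$ bounds the number of limit cycles.

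First I would pass to symmetric coordinates $s=x_1+x_2$, $p=x_1x_2$. Since $P(x_1)-P(x_2)$ and $Q(x_1)-Q(x_2)$ vanish on the diagonal, each is divisible by $x_1-x_2$; for a genuine crossing the matching conditions become $\widetilde P(s,p)=0$ and $\widetilde Q(s,p)=0$, where $\widetilde P=\bigl(P(x_1)-P(x_2)\bigr)/(x_1-x_2)$ and $\widetilde Q$ are symmetric, hence polynomials in $(s,p)$. The decisive structural observation is that in all three cases $\widetilde P$ and $\widetilde Q$ are \emph{affine in} $p$: from $\bigl(x_1^{k}-x_2^{k}\bigr)/(x_1-x_2)=h_{k-1}(x_1,x_2)$ one computes $h_1=s$, $h_2=s^2-p$, $h_3=s^3-2sp$, all of degree at most $1$ in $p$. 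Eliminating $p$ between the two affine-in-$p$ relations then yields a single polynomial equation in $s$, whose real roots control the count.

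Carrying out the three cases is then immediate. In the linear case $\widetilde P=As+B$ and $\widetilde Q=A's+B'$ do not involve $p$ at all, so the conditions constrain only $s$ and leave $x_1-x_2$ free; no periodic orbit can be isolated, and there are no limit cycles. In the quadratic case $\widetilde P=-Ap+(As^2+Bs+C)$ and $\widetilde Q=-A'p+(A's^2+B's+C')$; eliminating $p$ yields $(A'B-AB')s+(A'C-AC')=0$, which is linear in $s$, giving at most one admissible $s$, one pair $\{x_1,x_2\}$, hence at most one limit cycle. In the cubic case $\widetilde P=-(2As+B)p+(As^3+Bs^2+Cs+D)$ and similarly for $\widetilde Q$; cross-multiplying to eliminate $p$ gives a polynomial in $s$ whose $s^4$ terms cancel, leaving the cubic $(A'B-AB')s^3+2(A'C-AC')s^2+\cdots=0$, with at most three real roots and hence at most three limit cycles.

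The main obstacle, and the point that yields the sharp value $3$ rather than $4$ in the cubic case, is precisely this cancellation of the leading $s^4$ term after eliminating $p$: the two products $(As^3+\cdots)(2A's+\cdots)$ and $(A's^3+\cdots)(2As+\cdots)$ produced by cross-multiplication share the same leading coefficient $2AA'$, and I would confirm the cancellation by an explicit coefficient comparison. Two routine loose ends remain. First, the degenerate configurations in which a leading coefficient (such as $A$, $A'$, or $A'B-AB'$) vanishes must be treated separately: there the elimination either drops the degree, strengthening the bound, or makes the two relations proportional, producing a continuum of periodic orbits and therefore no \emph{isolated} cycle. Second, one must check that each admissible symmetric pair with $s^2-4p>0$ gives at most one crossing cycle, so that counting pairs is a genuine upper bound for the number of limit cycles. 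Sharpness of the three bounds is then exhibited by explicit examples, in the spirit of Example~\ref{ex_cl_b}.
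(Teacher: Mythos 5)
Your proposal is correct, and the core reduction is the same as the paper's: on each half-plane the complex potential $\Omega^\pm=\int p^\pm\,dz$ gives a polynomial first integral $\psi^\pm=\Im\Omega^\pm$, a crossing cycle meeting $\Sigma$ at $x_1\neq x_2$ must satisfy $\psi^\pm(x_1,0)=\psi^\pm(x_2,0)$, and after dividing by $x_1-x_2$ one counts common zeros of two symmetric polynomials (your $\widetilde P,\widetilde Q$ are exactly the paper's $c^\pm$). Where you genuinely diverge is the counting step. The paper eliminates by computing the resultant of $c^+$ and $c^-$ with respect to $x_2$: a quadratic in $x_1$ in the degree-two case and a sextic in the degree-three case (whose coefficients it declines to print, as ``too lengthy''), and then halves the count by invoking the symmetry $(x_1,x_2)\leftrightarrow(x_2,x_1)$. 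You instead pass to symmetric coordinates $s=x_1+x_2$, $p=x_1x_2$, observe that both matching polynomials are affine in $p$ (via $h_2=s^2-p$, $h_3=s^3-2sp$), and eliminate $p$ by cross-multiplication, obtaining a single equation in $s$ of degree $0$, $1$, and $3$ in the three cases; I checked your key cancellation, $2AA's^4-2AA's^4=0$ with surviving leading coefficient $A'B-AB'$ and next coefficient $2(A'C-AC')$, and it is right. Your route buys transparency: the bounds $0$, $1$, $3$ are read off directly as the degrees of the eliminant in $s$, the symmetry double-counting is absorbed automatically (each root $s_0$ determines one unordered pair), and the ``why 3 and not 4'' question is answered by an explicit one-line cancellation rather than a machine computation. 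The paper's resultant route buys concreteness, since the explicit $R(x_1)$ is reused to verify Example~\ref{example1}. Two further remarks: first, you flag the degenerate configurations (vanishing leading coefficients, proportional relations producing continua of periodic orbits) more explicitly than the paper does, which is a genuine improvement in rigor even if you leave them as routine; second, like the paper, you implicitly restrict to cycles crossing $\Sigma$ exactly twice, and the injectivity of the map from such cycles to unordered pairs, which you do note, is what makes the count an upper bound, so no gap there relative to the paper's own standard.
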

The proof of this result is developed through a sequence of propositions, each handling a specific degree case.
\begin{proposition}\label{prop12}
    Consider a piecewise anti-holomorphic polynomial system given by \eqref{main_eq}. If \( p^\pm \) are linear antiholomorphic functions, then the system described by \eqref{main_eq} has no limit cycles.
\end{proposition}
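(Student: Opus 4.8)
The plan is to exploit the polynomial first integrals guaranteed on each half-plane and to show that the induced Poincar\'e return map across $\Sigma$ is a rigid translation of the line, which can never possess an isolated fixed point. Write $p^\pm(z)=A^\pm z+B^\pm$ with $A^\pm=a_1^\pm+i a_2^\pm$ and $B^\pm=b_1^\pm+i b_2^\pm$. By Theorem~\ref{teopsi}, the anti-holomorphic system on $\{\Im z>0\}$ (resp. $\{\Im z<0\}$) is Hamiltonian with first integral $\psi^\pm=\Im\Omega^\pm$, where $\Omega^\pm=\int p^\pm\,dz=\tfrac{A^\pm}{2}z^2+B^\pm z$; explicitly,
\[
\psi^\pm(x,y)=b_2^\pm x+b_1^\pm y+\tfrac{a_2^\pm}{2}x^2+a_1^\pm xy-\tfrac{a_2^\pm}{2}y^2 .
\]

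First I would restrict these integrals to the switching line. A crossing periodic orbit must meet $\Sigma=\{y=0\}$ at two distinct points $x_1\neq x_2$, and conservation of $\psi^+$ along the upper arc together with conservation of $\psi^-$ along the lower arc forces $\psi^\pm(x_1,0)=\psi^\pm(x_2,0)$. Since $\psi^\pm(x,0)=\tfrac{a_2^\pm}{2}x^2+b_2^\pm x$ is quadratic in $x$ (in the nondegenerate case $a_2^\pm\neq0$), two distinct points share a level exactly when they are symmetric about its vertex, i.e.
\[
x_1+x_2=-\frac{2b_2^+}{a_2^+}\qquad\text{and}\qquad x_1+x_2=-\frac{2b_2^-}{a_2^-} .
\]
Equivalently, the upper and lower transition maps are the involutions $F_U(x)=-x-2b_2^+/a_2^+$ and $F_L(x)=-x-2b_2^-/a_2^-$.

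The key step is then to compose these reflections. The return map $\Pi=F_L\circ F_U$ satisfies
\[
\Pi(x)=x+\frac{2b_2^+}{a_2^+}-\frac{2b_2^-}{a_2^-},
\]
so $\Pi$ is a translation of the real line. A translation is either fixed-point free (when $b_2^+/a_2^+\neq b_2^-/a_2^-$) or is the identity (when equality holds); in the first case there is no crossing periodic orbit at all, and in the second every admissible $x_1$ produces a periodic orbit, giving a continuum rather than any isolated one. Since a limit cycle is by definition an \emph{isolated} periodic orbit, neither alternative yields one, which establishes the proposition.

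The hard part will be dispatching the degenerate coefficient configurations $a_2^\pm=0$, which fall outside the parabola argument and must be treated directly: there $\psi^\pm(x,0)=b_2^\pm x$ is affine, hence injective when $b_2^\pm\neq0$, so that side admits no pair of distinct crossing points and \emph{a fortiori} no crossing cycle, while the fully degenerate subcase $a_2^\pm=b_2^\pm=0$ makes $\{y=0\}$ itself a level set and is handled separately. A secondary point to check is that the quadratic genuinely returns the \emph{partner} root $x_2\neq x_1$ rather than $x_1$ again, and that the arcs realizing $F_U$ and $F_L$ actually exist; both follow from the reflection symmetry of the level parabolas about their vertices.
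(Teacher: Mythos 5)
Your proposal is correct and follows essentially the same route as the paper: both restrict the polynomial first integrals $\psi^\pm$ to the switching line to obtain $x_1+x_2=-2b_2^\pm/a_2^\pm$ on each side, and then conclude that the two conditions are either incompatible (no crossing periodic orbit) or identical (a continuum of periodic orbits, hence none isolated); your translation $\Pi=F_L\circ F_U$ is simply the return-map repackaging of the paper's observation that $c^+(x_1,x_2)=0$ and $c^-(x_1,x_2)=0$ are parallel straight lines in the $(x_1,x_2)$-plane. The only genuine addition is your explicit treatment of the degenerate cases $a_2^\pm=0$, which the paper passes over silently by dividing by $a_2^\pm$.
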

\begin{proof}
    Suppose that $p^\pm(z)=A^\pm z+B^\pm=u^\pm+iv^\pm,$ where $A^\pm=a_1^\pm+ia_2^\pm,B^\pm=b_1^\pm+ib_2^\pm\in\mathbb{C}.$  
     Now, we know that \[
\Omega^\pm(z)= \frac{A^\pm z^2}{2}+B^\pm z
\]
is a complex potential of the system on $\{\Im(z)>0\}$ and  $\{\Im (z)<0\}$, respectively. Thus, a first integral is 
$$\psi^\pm(x,y)=\Im\Omega^\pm(z)=b_2^\pm x+b_1^\pm y+\frac{a_2^\pm}{2} x^2+a_1^\pm xy-\frac{a_2^\pm y^2}{2},$$ and notice that $\partial\psi^\pm/\partial y = u^\pm$ and $\partial\psi^\pm/\partial x = v^\pm$, hence $\dot{x}=u^\pm$, $\dot{y}=-v^\pm$ is Hamiltonian. Now, define
        $$c^\pm(x_1,x_2)=\frac{\psi^\pm(x_1,0)-\psi^\pm(x_2,0)}{x_1-x_2}=\frac{2b_2^\pm+a_2^\pm(x_1+x_2)}{2},$$
        for $x_1\neq x_2.$ Recall that in the $(x_1, x_2)-$plane, $c^\pm(x_1,x_2)=0$ define implicitly the half return map in $\Sigma^\pm$. Given that \( c^\pm(x_1,x_2)=0 \) leads to the expression \( x_1=\frac{-2b_2^\pm-a_2^\pm x_2}{a_2^\pm} \), we observe that this represents a set of parallel straight lines. Consequently, the system defined by \( c^+(x_1,x_2)=c^-(x_1,x_2)=0 \) may either have no solutions for the variables \( x_1 \) and \( x_2 \) or possess infinitely many solutions. In both cases, the piecewise anti-holomorphic polynomial system cannot support the existence of limit cycles.

\end{proof}

\begin{proposition}\label{prop11}
    Consider a piecewise anti-holomorphic polynomial system given by \eqref{main_eq}. If \( p^\pm \) are quadratic antiholomorphic functions, then the system described by \eqref{main_eq} has at most one limit cycle.
\end{proposition}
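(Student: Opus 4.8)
The plan is to mirror the structure of Proposition~\ref{prop12}, replacing the parallel lines that arose in the linear case by conics, and then to exploit a hidden symmetry that forces these conics to meet in essentially one admissible point. First I would write $p^\pm(z)=A^\pm z^2+B^\pm z+C^\pm$ and invoke Theorem~\ref{teoalg}: the potential $\Omega^\pm(z)=\int p^\pm\,dz=\tfrac{A^\pm}{3}z^3+\tfrac{B^\pm}{2}z^2+C^\pm z$ is polynomial, so $\psi^\pm=\Im\Omega^\pm$ is a cubic first integral of the flow on each half-plane. Restricting to the switching line $y=0$, where $z=x$ is real, gives
\[
\psi^\pm(x,0)=\frac{\alpha^\pm}{3}x^{3}+\frac{\beta^\pm}{2}x^{2}+\gamma^\pm x,\qquad \alpha^\pm:=\Im A^\pm,\ \beta^\pm:=\Im B^\pm,\ \gamma^\pm:=\Im C^\pm.
\]

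A crossing limit cycle must meet $\Sigma$ at two distinct points $x_1\neq x_2$, the upper arc preserving $\psi^+$ and the lower arc preserving $\psi^-$. As in Proposition~\ref{prop12}, I would encode the two half-return conditions through the symmetric functions
\[
c^\pm(x_1,x_2)=\frac{\psi^\pm(x_1,0)-\psi^\pm(x_2,0)}{x_1-x_2}=\frac{\alpha^\pm}{3}\bigl(x_1^{2}+x_1x_2+x_2^{2}\bigr)+\frac{\beta^\pm}{2}(x_1+x_2)+\gamma^\pm,
\]
so that limit cycles correspond to common zeros of $c^+$ and $c^-$ with $x_1\neq x_2$. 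Each $c^\pm=0$ is now a conic rather than a line, and a naive B\'ezout count would only bound the number of intersections by four; the content of the proposition is that the true count is one.

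The key step---and the reason the bound collapses to one---is to pass to the symmetric coordinates $s=x_1+x_2$ and $q=x_1x_2$, under which $x_1^2+x_1x_2+x_2^2=s^2-q$ and hence
\[
c^\pm=\frac{\alpha^\pm}{3}\,(s^{2}-q)+\frac{\beta^\pm}{2}\,s+\gamma^\pm.
\]
Each equation $c^\pm=0$ is \emph{linear} in $q$, and solving it (under the nondegeneracy $\alpha^\pm\neq0$, which is exactly what makes $p^\pm$ genuinely quadratic with a cubic potential) produces a \emph{monic} quadratic in $s$,
\[
q=s^{2}+\frac{3\beta^\pm}{2\alpha^\pm}\,s+\frac{3\gamma^\pm}{\alpha^\pm}.
\]
Equating the two expressions for $q$, the $s^2$ terms cancel identically, leaving a single linear equation in $s$. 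This yields at most one value of $s$, hence at most one value of $q$, and therefore at most one unordered pair $\{x_1,x_2\}$---that is, at most one limit cycle.

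I expect the main obstacle to be the bookkeeping of degenerate configurations rather than the central computation. Specifically, I would need to treat the cases where $\alpha^+$ or $\alpha^-$ vanishes (so the corresponding $c^\pm=0$ is already linear in $s$ and independent of $q$, fixing $s$ directly), and to confirm that each admissible $(s,q)$ actually produces real, distinct crossing points lying in the correct half-planes and a genuine closed orbit, rather than a spurious algebraic solution. These verifications parallel the reasoning of the linear case and only reduce the count further, so they do not threaten the bound; the essential mechanism is the monic-quadratic cancellation of the $s^2$ term, forced by the shared leading structure of the two cubic first integrals.
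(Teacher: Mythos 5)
Your proof is correct and reaches the paper's bound by a genuinely different counting mechanism. The paper shares your setup exactly --- the cubic first integrals $\psi^\pm=\Im\,\Omega^\pm$, the crossing conditions $c^\pm(x_1,x_2)=0$, and the observation that $c^\pm$ is symmetric --- but then eliminates $x_2$ by computing the resultant $R(x_1)$ of $c^+$ and $c^-$ with respect to $x_2$, obtaining an explicit quadratic polynomial in $x_1$; since $R$ has at most two real roots and a limit cycle forces \emph{both} of its crossing abscissas to be roots of $R$ (symmetry gives $R=S$), at most one cycle survives. Your passage to the symmetric coordinates $s=x_1+x_2$, $q=x_1x_2$ replaces this elimination by the observation that both conics are graphs of \emph{monic} quadratics
\[
q=s^{2}+\frac{3\beta^\pm}{2\alpha^\pm}\,s+\frac{3\gamma^\pm}{\alpha^\pm},
\]
so their difference is linear in $s$. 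This is cleaner: it avoids the resultant computation and its unwieldy coefficients, it explains structurally why the count collapses (the shared monic leading term), and it builds the unordered-pair identification into the coordinates instead of invoking symmetry after the fact. It also extends directly to the cubic case (Proposition~\ref{prop13}): there $c^\pm$ is again linear in $q$ with an $s$-dependent coefficient, and the same leading-term cancellation caps the degree in $s$ at three.

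Two caveats. First, your parenthetical claim that $\alpha^\pm\neq0$ ``is exactly what makes $p^\pm$ genuinely quadratic'' is inaccurate: quadratic means $A^\pm\neq0$, and $\alpha^\pm=\Im A^\pm$ may vanish while $\Re A^\pm\neq0$; you do list $\alpha^\pm=0$ among the degenerate cases to treat, so this is only a misstatement, but the nondegeneracy is a condition on the restriction of $\psi^\pm$ to $\Sigma$, not on the degree of $p^\pm$. Second, and more substantively, your assertion that degenerate configurations ``only reduce the count further'' fails in the case where the two conics coincide (i.e.\ the linear equation in $s$ holds identically): there the system $c^+=c^-=0$ has infinitely many solutions and solution-counting gives no bound at all. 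What rescues the bound is the argument used in Proposition~\ref{prop12}: a continuum of crossing solutions makes the return map the identity near any closed orbit, so no periodic orbit is isolated and the number of limit cycles is zero. You should state this explicitly; note that the paper's own proof of Proposition~\ref{prop11} is likewise silent on the corresponding degeneracy $R\equiv0$, so once you spell out this case your argument is, if anything, the more complete of the two.
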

\begin{proof}
    Suppose that $p^\pm(z)=A^\pm z^2+B^\pm z+C^\pm=u^\pm+iv^\pm,$ where $A^\pm=a_1^\pm+ia_2^\pm,B^\pm=b_1^\pm+ib_2^\pm,C^\pm=c_1^\pm+ic_2^\pm\in\mathbb{C}.$  
     Now, we know that \[
\Omega^\pm(z)= \frac{A^\pm z^3}{3}+\frac{B^\pm z^2}{2}+C^\pm z
\]
is a complex potential of the system on $\{\Im(z)>0\}$ and  $\{\Im (z)<0\}$, respectively. Thus, a first integral is 
$$\psi^\pm(x,y)=\Im\Omega^\pm(z)=
c_2^\pm x + \frac{b_2^\pm x^2}{2} + \frac{a_2^\pm x^3}{3} 
+ c_1^\pm y + b_1^\pm x y + a_1^\pm x^2 y 
- \frac{b_2^\pm y^2}{2} - a_2^\pm x y^2 - \frac{a_1^\pm y^3}{3},$$ and notice that $\partial\psi^\pm/\partial y = u^\pm$ and $\partial\psi^\pm/\partial x = v^\pm$, hence $\dot{x}=u^\pm$, $\dot{y}=-v^\pm$ is Hamiltonian. Now, define
        $$c^\pm(x_1,x_2)=\frac{\psi^\pm(x_1,0)-\psi^\pm(x_2,0)}{x_1-x_2}=c_2^\pm + \frac{b_2^\pm}{2}  (x_1 + x_2) + \frac{a_2^\pm}{3}  (x_1^2 + x_1x_2 + x_2^2),$$
        for $x_1\neq x_2.$ Recall that the polynomial \( c(x_1,x_2) \) is symmetric, so \( (x_1^0, x_2^0) \) is a solution of \( c^\pm(x_1, x_2) = 0 \) if, and only if, \( (x_2^0, x_1^0) \) is also a solution. Consequently, both solutions lead to the same potential limit cycle. Additionally, we know that if \( (x_1^0, x_2^0) \) satisfies \( c^+ (x_1^0, x_2^0) = c^- (x_1^0, x_2^0) = 0 \), then \( x_1^0 \) must be a root of the resultant polynomial \( R(x_1) \) of \( c^- \) and \( c^+ \) with respect to the variable \( x_2 \). Similarly, \( x_2^0 \) must be a root of the resultant polynomial \( S(x_2) \) of \( c^- \) and \( c^+ \) concerning \( x_1 \). Moreover, it is straightforward to see that \( R(x_1) = S(x_1) \). Thus, 
\[
\begin{aligned}
R(x_1) = \frac{1}{1296}\Big(&
-108 a_2^+ b_2^- b_2^+ c_2^- 
+ 108 a_2^- (b_2^+)^2 c_2^- 
+ 144 (a_2^+)^2 (c_2^-)^2 
+ 108 a_2^+ (b_2^-)^2 c_2^+\\
&- 108 a_2^- b_2^- b_2^+ c_2^+
- 288 a_2^- a_2^+ c_2^- c_2^+ 
+ 144 (a_2^-)^2 (c_2^+)^2
+ 72 (a_2^+)^2 b_2^- c_2^- x_1\\
&- 72 a_2^- a_2^+ b_2^+ c_2^- x_1
- 72 a_2^- a_2^+ b_2^- c_2^+ x_1
+ 72 (a_2^-)^2 b_2^+ c_2^+ x_1
+ 36 (a_2^+)^2 (b_2^-)^2 x_1^2 \\
&- 72 a_2^- a_2^+ b_2^- b_2^+ x_1^2
+ 36 (a_2^-)^2 (b_2^+)^2 x_1^2
\Big).
\end{aligned}
\]
 Thus, we conclude that \( R(x_1) \) can have at most 2 real roots. Due to the symmetry property, we can conclude that the piecewise anti-holomorphic polynomial system \eqref{main_eq} can have at most one limit cycle.

\end{proof}

\begin{proposition}\label{prop13}
    Consider a piecewise anti-holomorphic polynomial system given by \eqref{main_eq}. If \( p^\pm \) are cubic antiholomorphic functions, then the system described by \eqref{main_eq} has at most three limit cycles.
\end{proposition}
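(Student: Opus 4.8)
The plan is to run the same return-map argument used in Propositions~\ref{prop12} and \ref{prop11}, pushing the symmetric-polynomial elimination one degree further. Write $p^\pm(z)=A^\pm z^3+B^\pm z^2+C^\pm z+D^\pm$ with $A^\pm=a_1^\pm+ia_2^\pm$, $B^\pm=b_1^\pm+ib_2^\pm$, $C^\pm=c_1^\pm+ic_2^\pm$, $D^\pm=d_1^\pm+id_2^\pm$. By Theorem~\ref{teoalg} each half-plane carries the polynomial potential $\Omega^\pm=\frac{A^\pm z^4}{4}+\frac{B^\pm z^3}{3}+\frac{C^\pm z^2}{2}+D^\pm z$, so $\psi^\pm=\Im\Omega^\pm$ is a first integral and $\dot x=u^\pm,\ \dot y=-v^\pm$ is Hamiltonian. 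Restricting to $\Sigma$ gives $\psi^\pm(x,0)=\frac{a_2^\pm}{4}x^4+\frac{b_2^\pm}{3}x^3+\frac{c_2^\pm}{2}x^2+d_2^\pm x$, and the half-return condition is encoded by the symmetric cubic
\[
c^\pm(x_1,x_2)=\frac{\psi^\pm(x_1,0)-\psi^\pm(x_2,0)}{x_1-x_2}
=\frac{a_2^\pm}{4}(x_1^3+x_1^2x_2+x_1x_2^2+x_2^3)+\frac{b_2^\pm}{3}(x_1^2+x_1x_2+x_2^2)+\frac{c_2^\pm}{2}(x_1+x_2)+d_2^\pm.
\]
As in the lower-degree cases, a crossing limit cycle must meet $\Sigma$ at two distinct points solving $c^+(x_1,x_2)=c^-(x_1,x_2)=0$, so it suffices to bound the number of such unordered pairs $\{x_1,x_2\}$ by three.

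The decisive step is to pass to the symmetric coordinates $s=x_1+x_2$, $q=x_1x_2$. Using $x_1^2+x_1x_2+x_2^2=s^2-q$ and $x_1^3+x_1^2x_2+x_1x_2^2+x_2^3=s^3-2sq$, each $c^\pm$ becomes \emph{affine} in $q$:
\[
c^\pm=\alpha^\pm(s)\,q+P^\pm(s),\qquad
\alpha^\pm(s)=-\frac{a_2^\pm}{2}s-\frac{b_2^\pm}{3},\qquad
P^\pm(s)=\frac{a_2^\pm}{4}s^3+\frac{b_2^\pm}{3}s^2+\frac{c_2^\pm}{2}s+d_2^\pm .
\]
Eliminating $q$ from the two linear relations $\alpha^\pm(s)q+P^\pm(s)=0$ forces any common solution to annihilate the determinant
\[
G(s):=\alpha^+(s)P^-(s)-\alpha^-(s)P^+(s).
\]
A priori $G$ has degree $4$, but the $s^4$ coefficients of $\alpha^+P^-$ and $\alpha^-P^+$ are both equal to $-\tfrac{1}{8}a_2^+a_2^-$ and therefore cancel, so $\deg G\le 3$. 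Hence $G$ has at most three real roots $s$; since each admissible $s$ determines $q$ uniquely via $\alpha^\pm(s)q=-P^\pm(s)$, and thus a single unordered pair $\{x_1,x_2\}$ as the roots of $t^2-st+q=0$, the system admits at most three limit cycles.

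The part to get exactly right is the degree drop, since this is precisely where ``at most three'' (rather than four) originates; I would record that after the $s^4$ cancellation the leading $s^3$ coefficient of $G$ equals $\tfrac{1}{12}(a_2^-b_2^+-a_2^+b_2^-)$, so $G$ is genuinely cubic in the generic regime. The remaining care concerns degeneracies: if $G\equiv 0$ the two return conditions are proportional and the crossing periodic orbits fill a band, so none is isolated and there are no limit cycles; and the isolated $s$ at which some $\alpha^\pm$ vanishes already sit among the roots of $G$ and contribute at most one pair each. I would note that this symmetric elimination is merely the coordinate-free counterpart of the resultant computation of Proposition~\ref{prop11}, avoiding the expansion of a large resultant, and finally exhibit a concrete cubic realizing three crossing cycles to confirm sharpness.
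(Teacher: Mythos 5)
Your argument is correct and reaches the paper's bound, but by a genuinely different elimination. The paper's own proof runs the same recipe as Proposition~\ref{prop11} one degree higher: it forms the resultant $R(x_1)$ of $c^+$ and $c^-$ with respect to $x_2$, reports (without printing the coefficients, which it deems too long) that $R$ has degree $6$, hence at most six real roots, and then invokes the symmetry $(x_1,x_2)\mapsto(x_2,x_1)$ to halve the count to three unordered pairs. You instead pass to the symmetric coordinates $s=x_1+x_2$, $q=x_1x_2$, in which both crossing conditions become affine in $q$, and eliminate $q$ linearly to obtain $G(s)=\alpha^+P^--\alpha^-P^+$. Your computations check out: the $s^4$ coefficients of $\alpha^+P^-$ and $\alpha^-P^+$ are both $-\tfrac18 a_2^+a_2^-$ and cancel, and the $s^3$ coefficient of $G$ is $\tfrac1{12}\bigl(a_2^-b_2^+-a_2^+b_2^-\bigr)$, so generically $G$ is an honest cubic and each root determines at most one unordered pair $\{x_1,x_2\}$, giving the bound of three directly. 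What your route buys: the symmetry is consumed at the outset rather than a posteriori, the eliminant is small and printable, the genericity condition $a_2^-b_2^+\neq a_2^+b_2^-$ is explicit, and you at least discuss degeneracies, which the paper passes over in silence. What the paper's route buys: uniformity with the quadratic case and a purely mechanical computation. One caveat applies to both proofs: if $(a_2^+,b_2^+)$ and $(a_2^-,b_2^-)$ are proportional and the common zero $s_0$ of $\alpha^\pm$ also annihilates both $P^\pm$, then $c^\pm$ share the factor $x_1+x_2-s_0$; in that case the paper's resultant vanishes identically (so its count says nothing), and your step ``each admissible $s$ gives at most one pair'' fails at $s_0$, since the whole line $s=s_0$ consists of candidate pairs. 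Your closing remark about common components producing only non-isolated periodic orbits is the right tool, but it should be stated so as to cover this partial degeneracy (a common factor with $G\not\equiv 0$), not only the case $G\equiv 0$; with that amendment the bound survives, since off the common line the two zero sets meet in at most one further point. Finally, note that the concrete example realizing three cycles is not needed for the proposition as stated, which asserts only the upper bound.
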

\begin{proof}
    Suppose that $p^\pm(z)=A^\pm z^3+B^\pm z^2+C^\pm z+D^\pm=u^\pm+iv^\pm,$ where $A^\pm=a_1^\pm+ia_2^\pm,B^\pm=b_1^\pm+ib_2^\pm,C^\pm=c_1^\pm+ic_2^\pm\in\mathbb{C}.$  
     Now, we know that \[
\Omega^\pm(z)= \frac{A^\pm z^4}{4}+\frac{B^\pm z^3}{3}+\frac{C^\pm z^2}{2}+D^\pm z
\]
is a complex potential of the system on $\{\Im(z)>0\}$ and  $\{\Im (z)<0\}$, respectively. Thus, a first integral is 
\begin{align*}
\psi^\pm(x,y) = \Im \Omega^\pm(z) 
&= d_2 x + \frac{c_2^{\pm} x^2}{2} + \frac{b_2^{\pm} x^3}{3} + \frac{a_2^{\pm} x^4}{4} + d_1^{\pm} y + c_1^{\pm} x y + b_1^{\pm} x^2 y + a_1^{\pm} x^3 y \\
&\quad - \frac{c_2^{\pm} y^2}{2} - b_2^{\pm} x y^2 - \frac{3}{2} a_2^{\pm} x^2 y^2 - \frac{b_1^{\pm} y^3}{3} - a_1^{\pm} x y^3 + \frac{a_2^{\pm} y^4}{4}
\end{align*}
and notice that $\partial\psi^\pm/\partial y = u^\pm$ and $\partial\psi^\pm/\partial x = v^\pm$, hence $\dot{x}=u^\pm$, $\dot{y}=-v^\pm$ is Hamiltonian. Now, define
        \begin{align*}c^\pm(x_1,x_2)&=d_2^{\pm} + \frac{c_2^{\pm}}{2}(x_1 + x_2) + \frac{b_2^{\pm}}{3}(x_1^2 + x_1 x_2 + x_2^2) + \frac{a_2^{\pm}}{4}(x_1^3 + x_1^2 x_2 + x_1 x_2^2 + x_2^3),\end{align*}
        for $x_1\neq x_2.$ Recall that the polynomial \( c(x_1,x_2) \) is symmetric, so \( (x_1^0, x_2^0) \) is a solution of \( c^\pm(x_1, x_2) = 0 \) if, and only if, \( (x_2^0, x_1^0) \) is also a solution. Consequently, both solutions lead to the same potential limit cycle. Additionally, we know that if \( (x_1^0, x_2^0) \) satisfies \( c^+ (x_1^0, x_2^0) = c^- (x_1^0, x_2^0) = 0 \), then \( x_1^0 \) must be a root of the resultant polynomial \( R(x_1) \) of \( c^- \) and \( c^+ \) with respect to the variable \( x_2 \). Similarly, \( x_2^0 \) must be a root of the resultant polynomial \( S(x_2) \) of \( c^- \) and \( c^+ \) concerning \( x_1 \). Moreover, it is straightforward to see that \( R(x_1) = S(x_1) \). Thus, 
\[R(x_1) = \frac{1}{2985984}
\sum_{i=0}^6\beta_i x^i,
\]
 where $\beta_i\in\mathbb{R},$ for all $i\in\{1,\dots,6\}$. We omit the explicit expressions for the coefficients $\beta_i$ as they are too lengthy. Thus, we conclude that \( R(x_1) \) can have at most 6 real roots. Due to the symmetry property, we can conclude that the piecewise anti-holomorphic polynomial system \eqref{main_eq} can have at most three limit cycles.
\end{proof}
 We now illustrate Proposition~\ref{prop11} with a concrete example of a piecewise anti-holomorphic polynomial system. 
 \begin{figure}[ht]
	\centering 
	\includegraphics[width=7cm, height=5cm]{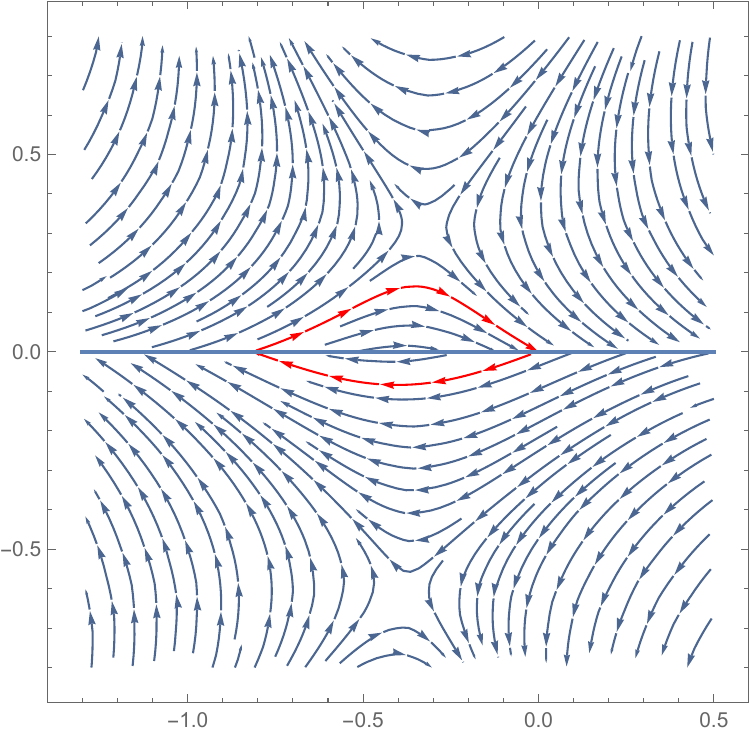} 
	\caption{Phase portrait of the piecewise anti-holomorphic polynomial system described in Example~\ref{example1}. The unique limit cycle obtained analytically is shown in the figure.}
	\label{figanti1}
\end{figure}
\begin{example}\label{example1}
Consider the piecewise anti-holomorphic polynomial system \eqref{thm:limit_cycles} defined by
\[
p^-(z) = (-3 + i) - (1 - i) z + \Bigg(-4 + \frac{i(-1 + \sqrt{33})}{-19 + 3\sqrt{33}}\Bigg) z^2, 
\qquad
p^+(z) = \frac{1}{2}\Big((2 + i) + (4 + 3 i) z + (6 + i) z^2\Big).
\]
The complex potentials of the system on $\{\Im(z)>0\}$ and  $\{\Im(z)<0\}$ are given, respectively, by 
\begin{align*}
\Omega^+(z)&=\frac{1}{2}(2 + i)z + \frac{1}{4}(4 + 3 i) z^2 + \frac{1}{6}(6 + i) z^3,\\
\Omega^-(z)&=(-3 + i)\, z - \left(\frac{1}{2} - \frac{i}{2}\right) z^2 + 
\frac{1}{3} \left(-4 + \frac{i(-1 + \sqrt{33})}{-19 + 3 \sqrt{33}}\right) z^3.
\end{align*}

Hence, we can explicitly compute the corresponding first integrals in each half-plane. These functions are
\[
\begin{aligned}
\varphi^+(x,y) &= \frac{x^3}{6} + y - \frac{3 y^2}{4} - y^3 + \frac{3}{4} x^2 (1 + 4 y)
- \frac{1}{2} x (-1 - 4 y + y^2),\\
\varphi^-(x,y)&= x + \frac{(-1 + \sqrt{33}) x^3}{-57 + 9 \sqrt{33}} + x^2 \left(\frac{1}{2} - 4 y\right)
- x y - \frac{(-1 + \sqrt{33}) x y^2}{-19 + 3 \sqrt{33}}
+ \frac{1}{6} y (-18 - 3 y + 8 y^2).
\end{aligned}
\]

These first integrals allow us to analyze the phase portrait and verify the existence of a limit cycle.  
In this particular example, we have
\[
\begin{aligned}
c^+(x_1,x_2) &= \frac{1}{12} \left(6 + 2 x_1^2 + 9 x_2 + 2 x_2^2 + x_1 (9 + 2 x_2)\right), \\[4pt]
c^-(x_1,x_2) &= 1 + \frac{x_1 + x_2}{2} + 
\frac{(-1 + \sqrt{33})(x_1^2 + x_1 x_2 + x_2^2)}{-57 + 9 \sqrt{33}}.
\end{aligned}
\]
Computing the resultant of \( c^+(x_1,x_2) \) and \( c^-(x_1,x_2) \) with respect to \( x_2 \), we obtain
\[
R(x_1) = \frac{20736 x_1 - 2304 \sqrt{33}\, x_1 + 9216 x_1^2}{5184 (-19 + 3 \sqrt{33})^2}.
\]
The real roots of this polynomial are
$
x_1 = 0, \, x_1 = \tfrac{1}{4}(-9 + \sqrt{33}).
$
Therefore, the system possesses a unique limit cycle, see Figure~\ref{figanti1}.
\end{example}

\section{Application: principal lines on immersed surfaces with constant mean curvature}\label{sec:curvature}
The study of foliations defined by level curves of harmonic functions finds a natural 
connection with the classical theory of surfaces in differential geometry. 
In particular, principal curvature lines of immersed surfaces with constant mean curvature 
(CMC surfaces) provide a rich geometric setting in which complex potentials naturally appear.\\

Let $M \subset \mathbb{R}^3$ be a smooth oriented surface immersed by a map 
$\alpha:U \subset \mathbb{R}^2 \to \mathbb{R}^3$. At each point $p \in M$, the operator 
$S:T_pM \to T_pM$ is defined by $S(v)=-D_vN$, where $N$ is the Gauss map. 
The eigenvalues of $S$ are the principal curvatures $k_1, k_2$, and the corresponding 
eigen-directions define two orthogonal line fields on the surface. 
The integral curves of these line fields are called \emph{principal curvature lines}. 

The mean curvature of the immersion is defined by
\[
H = \frac{k_1+k_2}{2}.
\]


A point $p\in M$ is called an \emph{umbilic point} if $k_1(p)=k_2(p)$. 
At an umbilic  all directions are principal directions. Outside umbilics, the principal line fields are smooth 
and form two orthogonal foliations of $M$.

A local parametrization $\alpha(u,v)$ of the surface is said to be in \emph{isothermal coordinates} 
if the first fundamental form has the form
\[I = E(u,v)(du^2+dv^2).\]
We introduce a complex coordinate $z=u+iv$. The second fundamental form as
\[ II = \Re\big( Q(z)dz^2 \big) + H \, I.\]

Let $p \in M$ be an umbilic point of the immersion $\alpha:U \subset \mathbb{R}^2 \to \mathbb{R}^3$. 
Outside umbilic points, the principal curvature directions define two line fields on $M$, 
which are orthogonal and smooth. If $p$ is an isolated umbilic, the \textit{index} of  
$p$ is defined as the winding number of the principal direction field around $p$. 
Equivalently, if $\theta(s)$ denotes a continuous determination of the angle of 
a principal direction along a small closed loop around $p$, then the index of $p$ 
is given by
\[\operatorname{Ind}(p) = \frac{1}{2\pi}\big(\theta(1)-\theta(0)\big).\]

It follows that the index of an isolated umbilic is always a negative half-integer, 
that is, $\operatorname{Ind}(p) = -\tfrac{n}{2}$ for some $n \in \mathbb{N}$.

A fundamental result says that  level sets of the real 
and imaginary parts of $Q(z)$ describe foliations that coincide 
with the principal curvature lines away from umbilic points.

\begin{theorem}
Let $p \in M$ be an isolated umbilic point of a smooth immersion 
$\alpha:U \subset \mathbb{R}^2 \to \mathbb{R}^3$ with constant mean curvature $H$. 
Then there exist local isothermal coordinates $(u,v)$ around $p$, with $z=u+iv$, 
such that the associated holomorphic function $Q$ takes the form
\[
Q(z) = z^n,
\]
for some integer $n \geq 1$. Moreover, the index of the umbilic is related to $n$ by
\[
\operatorname{Ind}(p) = -\tfrac{n}{2}.
\]
\end{theorem}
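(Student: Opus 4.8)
The plan is to proceed in three stages: reduce $Q$ to its local zero structure, normalize it to $z^n$ by a conformal change of coordinates, and then read the index off the normal form.

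\textbf{Stage 1 (holomorphicity and zeros of $Q$).} First I would record that $Q\,dz^2$ is a genuine quadratic differential: since $II = \Re(Q\,dz^2) + H\,I$ is an intrinsic symmetric form, one has $Q = \tfrac{1}{2}(e-g) - if$ in terms of the coefficients $e,f,g$ of $II$ in the isothermal frame. The Codazzi--Mainardi equations together with the hypothesis that $H$ is constant give $\partial_{\bar z}Q = 0$; this is the classical Hopf lemma, and I would invoke it to conclude that $Q$ is holomorphic. An umbilic is precisely a zero of the trace-free part of $II$, hence a zero of $Q$. Placing $p$ at the origin and using that $p$ is \emph{isolated}, $Q$ has an isolated zero there, so I may write $Q(z) = z^n h(z)$ with $h$ holomorphic, $h(0)\neq 0$, and $n \geq 1$ the order of vanishing.

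\textbf{Stage 2 (conformal normalization).} Under a holomorphic change $w = w(z)$ the differential transforms by $\tilde Q(w) = Q(z)/(w'(z))^2$, while $I$ remains isothermal because a conformal map scales $|dz|^2$ by a positive factor. I would therefore seek $w$ solving $w(z)^n\,(w'(z))^2 = Q(z)$, so that $\tilde Q(w) = w^n$. Extracting a local square root gives $w^{n/2}w' = z^{n/2}\sqrt{h(z)}$, and integration yields
\[
w(z) = \Bigl(\tfrac{n+2}{2}\,F(z)\Bigr)^{2/(n+2)}, \qquad F(z) = \int_0^z \zeta^{n/2}\sqrt{h(\zeta)}\,d\zeta.
\]
The hard part will be to verify that this $w$ is single-valued and biholomorphic near $0$, despite the fractional powers appearing when $n$ is odd. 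The key observation I would use is that $F(z) = c\,z^{(n+2)/2}(1+O(z))$ with $c\neq 0$, so the monodromy of $F$ around the origin matches exactly that of $z^{(n+2)/2}$; consequently $F^{2/(n+2)}$ is single-valued and equals $z$ times a holomorphic nonvanishing factor. This makes $w$ holomorphic with $w(0)=0$, $w'(0)\neq 0$, hence a local biholomorphism, and in the resulting isothermal coordinates $Q(z)=z^n$.

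\textbf{Stage 3 (index computation).} Finally I would use that the principal directions are the $dz$ extremizing $\Re(Q\,dz^2)$ relative to $I$, equivalently those with $Q\,dz^2 \in \R$, i.e. $\Im(Q\,dz^2)=0$. Writing $z = re^{i\phi}$ and a direction as $dz = e^{i\theta}$, with $Q(z)=z^n$ this becomes $n\phi + 2\theta \equiv 0 \pmod{\pi}$, so $\theta \equiv -\tfrac{n\phi}{2} \pmod{\pi/2}$. As $\phi$ runs over a small loop from $0$ to $2\pi$, a continuous determination of the line-field angle $\theta$ changes by $-n\pi$. By the definition of the index recalled above,
\[
\operatorname{Ind}(p) = \frac{1}{2\pi}\bigl(\theta(1)-\theta(0)\bigr) = \frac{-n\pi}{2\pi} = -\frac{n}{2}.
\]
The only subtlety here is that the angle must be tracked modulo $\pi$ rather than $2\pi$ (the field is unoriented), which is exactly what produces the half-integer value.
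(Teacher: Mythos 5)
Your argument is correct, but there is nothing in the paper to compare it against: the paper does not prove this theorem, it simply refers the reader to \cite{GS}. What you have reconstructed is the classical Hopf-differential proof, which is indeed the standard route. Stage 1 (Codazzi plus constant $H$ gives $\partial_{\bar z}Q=0$, so an isolated umbilic is an isolated zero $Q(z)=z^n h(z)$ with $h(0)\neq 0$) and Stage 3 (the principal directions solve $\Im(Q\,dz^2)=0$, so with $Q=z^n$ a continuous branch of the line-field angle drifts by $-n\pi$ around a small loop, giving $\operatorname{Ind}(p)=-\tfrac{n}{2}$ under the paper's own definition of index, with the mod-$\pi$ tracking correctly accounting for the unoriented field) are both complete and accurate. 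Stage 2 is the one place where care is needed, and your monodromy count is the right one: for odd $n$ the intermediate objects $\zeta^{n/2}$, $\sqrt{h}$ and $F$ are defined only on a double cover or a slit disk, so single-valuedness should be asserted only for the final composite $w=\bigl(\tfrac{n+2}{2}F\bigr)^{2/(n+2)}$, which your expansion $F(z)=c\,z^{(n+2)/2}\bigl(1+O(z)\bigr)$ with $c\neq 0$ justifies; this is exactly the standard local normal-form argument for holomorphic quadratic differentials (Strebel), and conformality of $w$ preserves isothermality, as you note. The net comparison: the paper buys brevity by outsourcing the entire statement to \cite{GS}, while your proof is self-contained modulo two genuinely classical inputs (Hopf's lemma $\partial_{\bar z}Q=0$ and the biholomorphy of the normalizing coordinate), which is a reasonable trade and strictly more informative than the paper's treatment.
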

For a proof of this theorem we refer to \cite{GS}.\\


Thus, the local behavior of principal curvature lines around an isolated umbilic of a CMC immersion 
is completely determined by a monomial holomorphic function $Q$.

\section{Acknowledgements} Gabriel Rondón is partially supported by FAPESP grant 2024/15612-6. Paulo R. da Silva is partially supported by ANR-23-CE40-0028, 
FAPESP grant 2023/02959-5,  FAPESP grant 2024/15612-6 
 and CNPq grant 302154/2022-1.


\begin{thebibliography}{99}

\bibitem {BT} 
Brickman, L., and Thomas, E. S. (1977).
Conformal equivalence of analytic flows,
\textit{J. Differential Equations} \textbf{25}(3), 310–324.

\bibitem {Conway} 
Conway, J.~B. (1978).
\emph{Functions of One Complex Variable I}, 2nd ed.,
Springer, New York.

\bibitem{Filippov88}
Filippov, A.~F. (1988).
\textit{Differential equations with discontinuous righthand sides},
Mathematics and its Applications (Soviet Series), vol.~18,
Kluwer Academic Publishers, Dordrecht.
Translated from the Russian.

\bibitem {GGX} 
Garijo, A., Gasull, A., and Jarque, X. (2007).
Local and global phase portrait of equation $z'= f(z)$,
\textit{Discrete Contin. Dyn. Syst.} \textbf{17}(2), 309–329.

\bibitem{GSR2} 
Gasull, A., Rondón, G., and Silva, P. R. (2024).
On the number of limit cycles for piecewise polynomial holomorphic systems,
\textit{SIAM J. Appl. Dyn. Syst.} \textbf{23}(3), 1302–1332.

\bibitem{GSR4} 
Gasull, A., Rondón, G., and Silva, P. R. (2025).
Simultaneous bifurcation of limit cycles for piecewise holomorphic systems,
\textit{J. Dyn. Differ. Equations} (published online).

\bibitem{GSR3} 
Gouveia, L. F. S., Rondón, G., and Silva, P. R. (2022).
Piecewise holomorphic systems,
\textit{J. Differential Equations} \textbf{332}, 440–472.

\bibitem{GSR1} 
Gouveia, L. F. S., Silva, P. R., and Rondón, G. (2023).
Global phase portrait and local integrability of holomorphic systems,
\textit{Qual. Theory Dyn. Syst.} \textbf{22}, Art. 35.

\bibitem{GS} 
Gutiérrez, C., and Sotomayor, J. (1982).
Structural stability of umbilic points on surfaces immersed in $\mathbb{R}^3$,
\textit{Lecture Notes in Math.} \textbf{1007}, Springer, Berlin, 195–215.

\bibitem{HMR}
Huzak, R., Mardešić, P., and Resman, M. (2023).
The number of limit cycles for regularized piecewise smooth polynomial systems is unbounded,
\textit{J. Math. Anal. Appl.} \textbf{528}(1), 127–145.

\bibitem{HY}
Huan, S.-M., and Yang, X.-S. (2012).
On the number of limit cycles in general planar piecewise linear systems,
\textit{Discrete Contin. Dyn. Syst. Ser. A} \textbf{32}(6), 2147–2164.

\bibitem{LCC}
Li, C., Chen, F., and Chen, J. (2021).
Limit cycles in piecewise polynomial systems allowing a non-regular switching boundary,
\textit{Physica D} \textbf{422}, 132903.

\bibitem{LNT}
Llibre, J., Novaes, D. D., and Teixeira, M. A. (2015).
On the birth of limit cycles for non-smooth dynamical systems,
\textit{Bull. Sci. Math.} \textbf{139}(3), 229–244.

\bibitem {SCW} 
Stewart, J., Clegg, D., and Watson, S. (2020).
\emph{Multivariable Calculus}, 9th ed.,
Cengage Learning, Boston.

\end{thebibliography}
\end{document}